\newtheorem{theorem}{Theorem}
\newtheorem{proposition}{Proposition}
\newtheorem{lemma}{Lemma}
\newtheorem{remark}{Remark}
\newtheorem{definition}{Definition}
\def\N{\mathbb{N}}
\def\Z{\mathbb{Z}}
\def\R{\mathbb{R}}
\def\E{\mathbb{E}}
\numberwithin{equation}{section}
\newcommand\shorttitle{Bernoulli field on the line under the removal of isolates}
\begin{document}

\hypersetup{linkcolor=black}
\hypersetup{urlcolor=black}
\title{Three views on the thinned Bernoulli field on the line}
\author{Christof Külske\footnotemark[1] \and Niklas Schubert\footnotemark[2]}
\date{\today}

\maketitle
\begin{abstract}
This paper investigates the thinned Bernoulli field (TBF) on the one-dimensional integer lattice, where isolated occupied sites are removed from a standard Bernoulli configuration with density $p$. Our present work complements previous findings 
in higher dimensions and on trees by focusing on the detailed behavior on the line, particularly as $p$ approaches $1$.

First  we show that
while the TBF on the line is always quasilocally Gibbs, it displays a growing sensitivity to boundary conditions as $p$ increases, indicating an incipient loss of quasilocality. We provide precise asymptotics for this phenomenon, which is an echo of non-quasilocality happening in higher dimensions. Second, we turn to the one-sided point of view and prove that the TBF is a g-measure in the sense of dynamical systems and ergodic theory. The corresponding g-function
  is quasilocal but 
  becomes long-range again for large $p$. 
From that we finally develop our third view, in which  
we provide a transparent construction of the process 
in terms of a driving Markov chain on the
integers
of generalized house of cards type, offering a novel perspective on the TBF.

\end{abstract}
\textbf{Keywords:} Thinned Bernoulli field; Bernoulli site percolation, transformed measure; Gibbs property;  quasilocality; g-measure.\newline
\noindent\textbf{MSC2020 subject classifications:} 82B26 (primary);
60K35 (secondary).
\footnotetext[1]{Ruhr-University Bochum, Germany}
\footnotetext[0]{E-Mail: \href{mailto:Christof.Kuelske@ruhr-uni-bochum.de}{Christof.Kuelske@ruhr-uni-bochum.de}; ORCID iD: \href{https://orcid.org/0000-0001-9975-8329}{0000-0001-9975-8329}}
\footnotetext[0]{\url{https://math.ruhr-uni-bochum.de/en/faculty/professorships/stochastics/group-kuelske/staff/christof-kuelske/}}
\footnotetext[2]{E-Mail: \href{mailto:Niklas.Schubert@ruhr-uni-bochum.de}{Niklas.Schubert@ruhr-uni-bochum.de}; ORCID iD: \href{https://orcid.org/0009-0000-8912-4701}{0009-0000-8912-4701}}
\footnotetext[0]{\url{https://math.ruhr-uni-bochum.de/en/faculty/professorships/stochastics/group-kuelske/staff/niklas-schubert/}}
\tableofcontents
\hypersetup{linkcolor=blue}

\section{Introduction}

The study of random fields 
under local maps is relevant in many areas of statistical mechanics, probability, 
engineering, image analysis and statistics. 
In this setup a random field is a collection of random variables which are indexed by an infinite lattice, or a graph (network), which may in particular 
be a tree or the integer line. The local random variables, which in statistical mechanics are called \textit{spin variables}, then 
often take values in a finite alphabet. 

It was found in the context of rigorous investigations of the renormalization group
that local transformations may cause 
the so-called \textit{renormalization group pathologies}, see \cite{EFS93,Gr79}. This means that a transformation acting on a strongly coupled spatially Markovian random field may result in an 
image measure which has acquired some internal long-range dependence, and can not 
be represented by an absolutely convergent interaction potential. 
Famous examples are the low-temperature 
Ising model in spatial dimensions $d\geq 2 $ when subjected to a block spin transformation. 
It was proved that this transformation does not only destroy Markovianness, 
but it also destroys quasilocality. This means equivalently in the case of finite-alphabet processes that 
the continuity of finite-volume conditional probabilities w.r.t. the conditioning configurations is lost. Such measures are commonly called non-Gibbsian measures.
See \cite{Sc89,MaReShMo00,EFHR02,KN07,KuLeRe04} for various examples in which such singularities arise following from transformations applied to spin systems.

\subsubsection*{The thinned Bernoulli field (TBF) on graphs} Now, strong coupling is not 
necessary to produce this phenomenon:  
An example where a strictly local transformation even acting on an  \textit{independent} 
process can lead to such singular behavior was discussed in the two papers 
\cite{JaKu23,HeKuSc23}. 

The process they studied was the independent Bernoulli process 
on the integer lattice in spatial dimensions $d\geq 2$, and on a tree, 
under the removal of isolated occupied sites. 
It was found in both cases, that for small enough occupation densities $p$
the image measure was Gibbsian for a quasilocal specification, while for large enough $p$ the image measure 
was non-Gibbsian.

Intuitively it may seem surprising at first sight that it is the large-density 
regime which is singular, as  the removal 
map seems  less intrusive for large $p$ than it is for intermediate $p$. 
Indeed, look at the probability that an occupied  
site $i$ is removed from a realization of 
a Bernoulli site-percolation on a given graph. 
Writing $\hbox{deg}(i)$ for the degree of site $i$, 
we obtain the simple expression  
$p(1-p)^{\hbox{deg}(i)}$. 
This becomes arbitrarily small as the occupation probability $p$ tends to $1$, but still non-quasilocality prevails.

Investigating the process more closely this can be understood heuristically and mathematically, as the 
mechanism and also the proof of non-quasilocality 
at large $p$ rests on a phase transition in a \textit{hidden 
hard-core model} 
(which arises as the Bernoulli field constrained to have only isolated occupied sites). This phase transition can be triggered 
by shape-variations of large domains, 
and for this to be possible large occupation 
probabilities $p$ are beneficial and also necessary. 
For a related study of the natural companion
process to the TBF which only keeps the isolates 
from Bernoulli site-percolation, and removes clusters of size at least two, see \cite{EnJaKu22}.
This latter \textit{projection-to-isolates} process
can be seen as a discrete version of the 
Mat\'ern Type I process which is frequently studied in the continuum
\cite{Ma86,LaPe18} where configurations are point clouds in Euclidean space.  
For studies of transformed point processes, exhibiting transitions between quasilocal and non-quasilocal behavior, see for instance \cite{JaKuZa23}. 

In the present paper we are heading to understand in detail what
happens in the purely one-dimensional situation, with a particular 
view on the asymptotics $p\uparrow 1$, 
where long-range dependence builds up. The underlying transformation, the removal of isolated sites from the i.i.d. Bernoulli field on the line, is illustrated in Figure \ref{fig: Transformation T}.

\begin{figure}[ht!]
\centering
\scalebox{0.5}{
\begin{tikzpicture}[every label/.append style={scale=1.3}]
    \node[shape=circle,draw=black, fill=black, minimum size=0.5cm] (-B) at (-1,0) {};
    \node[shape=circle,draw=black, fill=black, minimum size=0.5cm] (-C) at (-2,0) {};
    \node[shape=circle,draw=black, fill=black, minimum size=0.5cm] (-D) at (-3,0) {};
    \node[shape=circle,draw=black, minimum size=0.5cm] (-E) at (-4,0) {};
    \node[shape=circle,draw=black,  minimum size=0.5cm] (-F) at (-5,0) {};
    \node[shape=circle,draw=cyan, fill=cyan, minimum size=0.5cm] (-G) at (-6,0) {}; \node[shape=circle,draw=black, minimum size=0.5cm] (-H) at (-7,0) {};
    \node[shape=circle,draw=black, fill=black, minimum size=0.5cm] (-I) at (-8,0) {};
    \node[shape=circle,draw=black, fill=black, minimum size=0.5cm] (-J) at (-9,0) {};
    \node[shape=circle,draw=black,  minimum size=0.5cm] (-K) at (-10,0) {};
    \node[shape=circle,draw=black, fill=black, minimum size=0.5cm] (-L) at (-11,0) {};
    \node[shape=circle,draw=black, fill=black, minimum size=0.5cm] (-M) at (-12,0) {};
    \node[shape=circle,draw=black, fill=black, minimum size=0.5cm] (-O) at (-13,0) {};
    \node[shape=circle,draw=black, minimum size=0.5cm, label=above:{$0$}] (A) at (0,0) {};
    \node[shape=circle,draw=cyan, fill=cyan, minimum size=0.5cm] (B) at (1,0) {};
    \node[shape=circle,draw=black,minimum size=0.5cm] (C) at (2,0) {};
    \node[shape=circle,draw=black, fill=black, minimum size=0.5cm] (D) at (3,0) {};
    \node[shape=circle,draw=black, fill=black, minimum size=0.5cm] (E) at (4,0) {};
    \node[shape=circle,draw=black,  minimum size=0.5cm] (F) at (5,0) {};
    \node[shape=circle,draw=black, minimum size=0.5cm] (G) at (6,0) {};
    \node[shape=circle,draw=black, fill=black, minimum size=0.5cm] (H) at (7,0) {};
    \node[shape=circle,draw=black, fill=black, minimum size=0.5cm] (I) at (8,0) {};
    \node[shape=circle,draw=black, fill=black, minimum size=0.5cm] (J) at (9,0) {};
    \node[shape=circle,draw=black,  minimum size=0.5cm] (K) at (10,0) {};
    \node[shape=circle,draw=black,  minimum size=0.5cm] (L) at (11,0) {};
    \node[shape=circle,draw=cyan, fill=cyan, minimum size=0.5cm] (M) at (12,0) {};
    \node[shape=circle,draw=black, minimum size=0.5cm] (O) at (13,0) {};

    \draw[-] (-O)--(-13.7,0){};
    \draw [-] (-O) -- (-M);
     \draw [-] (-L) -- (-M);
     \draw [-] (-L) -- (-K);
    \draw [-] (-J) -- (-K);
    \draw [-] (-I) -- (-J);
    \draw [-] (-H) -- (-I);
    \draw [-] (-G) -- (-H);
    \draw [-] (-F) -- (-G);
    \draw [-] (-F) -- (-E);
    \draw [-] (-E) -- (-D);
    \draw [-] (-D) -- (-C);
    \draw [-] (-C) -- (-B);
    \draw [-] (-B) -- (A);
    \draw [-] (A) -- (B);
    \draw[-](O)--(13.7,0){};
    \draw [-] (M) -- (O);
    \draw [-] (M) -- (L);
    \draw [-] (K) -- (L);
    \draw [-] (K) -- (J);
    \draw [-] (J) -- (I);
    \draw [-] (H) -- (I);
    \draw [-] (G) -- (H);
    \draw [-] (G) -- (F);
    \draw [-] (F) -- (E);
    \draw [-] (E) -- (D);
    \draw [-] (D) -- (C);
    \draw [-] (C) -- (B);
    
    \draw[->, black, very thick] (0,-0.8) to   node [text width=2.5cm,midway,above=0.5cm, align=center] {} (0,-2.8);
    \node[] () at (0.5,-1.8) {\textbf{T}};

    \node[shape=circle,draw=black, fill=black, minimum size=0.5cm] (-1) at (-1,-4) {};
    \node[shape=circle,draw=black, fill=black, minimum size=0.5cm] (-2) at (-2,-4) {};
    \node[shape=circle,draw=black, fill=black, minimum size=0.5cm] (-3) at (-3,-4) {};
    \node[shape=circle,draw=black, minimum size=0.5cm] (-4) at (-4,-4) {};
    \node[shape=circle,draw=black,  minimum size=0.5cm] (-5) at (-5,-4) {};
    \node[shape=circle,draw=black, minimum size=0.5cm] (-6) at (-6,-4) {};
    \node[shape=circle,draw=black, minimum size=0.5cm] (-7) at (-7,-4) {};
    \node[shape=circle,draw=black, fill=black, minimum size=0.5cm] (-8) at (-8,-4) {};
    \node[shape=circle,draw=black, fill=black, minimum size=0.5cm] (-9) at (-9,-4) {};
    \node[shape=circle,draw=black, minimum size=0.5cm] (-10) at (-10,-4) {};
    \node[shape=circle,draw=black, fill=black, minimum size=0.5cm] (-11) at (-11,-4) {};
    \node[shape=circle,draw=black, fill=black, minimum size=0.5cm] (-12) at (-12,-4) {};
    \node[shape=circle,draw=black, fill=black, minimum size=0.5cm] (-13) at (-13,-4) {};
    \node[shape=circle,draw=black,  minimum size=0.5cm, label=above:{$0$}] (0) at (0,-4) {};
    \node[shape=circle,draw=black, minimum size=0.5cm] (1) at (1,-4) {};
    \node[shape=circle,draw=black, minimum size=0.5cm] (2) at (2,-4) {};
    \node[shape=circle,draw=black, fill=black, minimum size=0.5cm] (3) at (3,-4) {};
    \node[shape=circle,draw=black, fill=black, minimum size=0.5cm] (4) at (4,-4) {};
    \node[shape=circle,draw=black, minimum size=0.5cm] (5) at (5,-4) {};
    \node[shape=circle,draw=black, minimum size=0.5cm] (6) at (6,-4) {};
    \node[shape=circle,draw=black, fill=black, minimum size=0.5cm] (7) at (7,-4) {};
    \node[shape=circle,draw=black, fill=black, minimum size=0.5cm] (8) at (8,-4) {};
    \node[shape=circle,draw=black, fill=black, minimum size=0.5cm] (9) at (9,-4) {};
    \node[shape=circle,draw=black, minimum size=0.5cm] (10) at (10,-4) {};
    \node[shape=circle,draw=black, minimum size=0.5cm] (11) at (11,-4) {};
    \node[shape=circle,draw=black, minimum size=0.5cm] (12) at (12,-4) {};
    \node[shape=circle,draw=black, minimum size=0.5cm] (13) at (13,-4) {};

    \draw[-](-13)-- (-13.7,-4){};
    \draw[-](-12)--(-13){};
    \draw[-](-11)--(-12){};
    \draw[-](-10)--(-11){};
    \draw[-] (-9) -- (-10);
    \draw [-] (-8) -- (-9);
    \draw[-] (-8) -- (-7);
    \draw [-] (-7) -- (-6);
    \draw [-] (-6) -- (-5);
    \draw [-] (-5) -- (-4);
    \draw [-] (-4) -- (-3);
    \draw [-] (-3) -- (-2);
    \draw [-] (-2) -- (-1);
    \draw [-] (-1) -- (0);
    \draw [-] (0) -- (1);
    \draw[-](13)--(13.7,-4){};
    \draw[-](12)--(13){};
    \draw[-](11)--(12){};
    \draw[-](10)--(11){};
    \draw[-] (9) -- (10);
    \draw [-] (8) -- (9);
    \draw[-] (8) -- (7);
    \draw [-] (7) -- (6);
    \draw [-] (6) -- (5);
    \draw [-] (5) -- (4);
    \draw [-] (4) -- (3);
    \draw [-] (3) -- (2);
    \draw [-] (2) -- (1);

\end{tikzpicture}
}
\small \caption{An example of a spin configuration on the line and the application of the map $T$. Every coloured dot is an occupied site, while every uncoloured dot is unoccupied. The blue coloured dots mark the isolated occupied sites.}
\label{fig: Transformation T}
\end{figure}
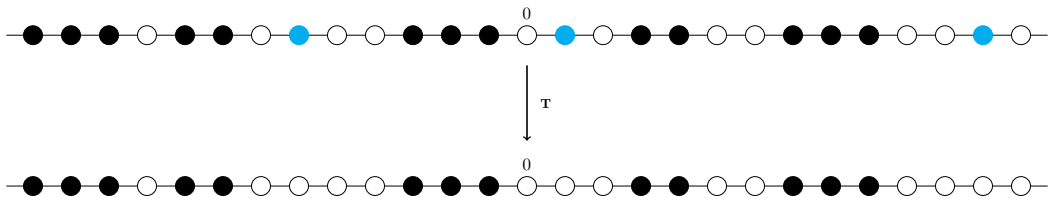

\subsubsection*{Processes on the line: 
Two-sided vs. one-sided view}
The study of processes with one-dimensional index set like $\Z$ or $\R$
has enjoyed renewed interest, in particular with regards to possible long-range 
phenomena \cite{BBDMSW21}, \cite{EnFeMaVe24}. In the present note we are interested in local 
descriptions of infinite-volume measure as consistent 
measures for systems of local kernels which describe conditional behavior on finite volume. 
Now, investigating infinite-volume measures for interacting one-dimensional systems can be done from two perspectives \cite{BeBeCoLe20}: 
First, with a two-sided (Gibbsian) view, 
where one defines consistent infinite-volume measures 
by prescribing probabilities in finite volumes 
conditioning on the outside of the volume, where this means here conditioning on  both \textit{past and future}. 
This is the common procedure in the statistical mechanics 
of lattice systems. 
Coming to phase transitions in the sense 
of multiplicity of the consistent infinite-volume measure, there is a great richness 
in particular in situations of long-range (polynomial) interactions (Dyson models), see
\cite{AfBiEnHa25}, \cite{BiEnvaEn18}, \cite{CaFePaMe05}, \cite{Dy69}, \cite{FrSp82}, \cite{GiGrRu66}, \cite{OkOh25}, \cite{Pa88}, \cite{Pa288}.

The second perspective, where one conditions only on the past, leads to a one-sided view. Here the key objects 
are left-interval specifications (LIS), g-function, and g-measures (which are then the consistent measures). The concept of a g-function is a possibly 
non-local generalization of a Markov chain transition kernel, for precise definitions see Subsection \ref{subsec: GM on line} below.
In this view one asks for shift-invariant measures on the space of two-sided infinite trajectories \cite{Si72} which are consistent under 
application of the given $g$-function. 
Note again, that 
$g$-measures, i.e. the consistent measures for 
a given specifying $g$-function, do not need 
to be unique \cite{Hu06,BrKa93}. Uniqueness only follows with 
further assumptions 
bounding the dependence of the $g$-function 
on the past, see below. 

The one-sided view and the two-sided view are related, but not be fully equivalent: 
\cite{FeGaGr11} provided an example 
of a $g$-measure which is not a Gibbs measure in the two-sided sense. The same phenomenon occurs 
for one-dimensional projections of 
the extremal low-temperature measures 
of the two-dimensional Ising model. These measures 
are famously known as  
Schonman-projections. That they are $g$-measures 
but not Gibbs measures for two-sided specifications 
was only recently proved in \cite{EnSh24}. 
Conversely, there are examples of Gibbs measures for polynomially decaying interactions in the low-temperature region 
which are provably not representable 
as g-measures. This was shown in the context 
of Dyson models in \cite{BiEnEnLe18}.

For a general discussion of fine distinctions for 
$g$-functions depending on the degree of dependence 
on the past, 
and their consequences see \cite{BeFeVe19}, \cite{FeMa04}, \cite{FeMa05}. 
There is moreover an analytical approach to understand properties 
of consistent measures for one-sided specifications, 
via eigenfunctions of Ruelle's 
transition operator, see \cite{EnFeMaVe24,Ma25} and the literature 
therein.

Long-range effects for one-dimensional (and higher-dimensional) systems 
appear also in a related but different way.   
We mention in this context  the study of long-range bond-percolation on the integer line. In these models  
one connects edges between vertices 
independently, with distance-dependent probabilities. 
Then interesting transitions between mean-field behavior and non-mean field behavior 
can be found (\cite{Hu24,AiChChNe88}).

\subsection*{Main results for the TBF on the line}

Let us come back to the thinned Bernoulli site percolation process, 
and provide an outline of the results of our present work. 
Summarizing, while we will show that 
the process is a Gibbs measure in the two-sided sense, and also a g-measure in the one-sided sense, 
we will find \textit{echoes of non-quasilocality} in the limit of large occupation probabilities $p$. Put differently, we see 
an \textit{incipient loss of quasilocality}. 
This means that while quasilocality still holds on the line, 
boundary condition variations are felt over longer 
and longer distances, as $p$ goes to one.

We provide a detailed quantitative discussion of this phenomenon from all three perspectives, as follows. For an overview of the results in this work, see Table \ref{tab:results}.

As some standard Gibbsian theory relies 
on non-nullness, special care is needed in some of our proof steps, as our image process does not 
enjoy the non-nullness property, since 
 isolated sites do not appear by construction.
\newline

\begingroup

\begin{table}[ht]
\scriptsize
    \centering
    \begin{tabular}{c||c|c|c}
    \rowcolor{gray!50}

        \multicolumn{1}{c||}{View}& \vtop{\hbox{\thead{Local description\\ of TBF $\mu_p'$}}}  & Regularity properties & \vtop{\hbox{\thead{Echoes of\\ singularities}}} \\\hline\hline
        \multirow{2}{*}{\thead{Gibbs \\ (2-sided)}}& \multirow{2}{*}{\thead{Explicit form of \\  specification $\gamma'_p$,\\ Thm. \ref{thm: Existence image specification}} }&\thead{B.c. influence,\\ upper bound Thm. \ref{thm: Quasilocality Gamma'}} & \thead{B.c. influence,\\ lower bound Thm. \ref{thm: Quasilocality Gamma'}}  \\ \cline{3-4}
      
        &&\thead{Uniqueness\\$|\mathcal{G}(\gamma'_p)|=1$, Prop. \ref{thm: Uniqueness of the Gibbs measure}}& \thead{Asymptotics\\ for $p\uparrow 1$, Thm. \ref{thm: Quasilocality Gamma'}}\\ \hline
        \rule{0pt}{8mm}
  \thead{g-measure\\(1-sided)} & \vtop{\hbox{\thead{Explicit form of\\g-function  $g_p$ in terms of\\stopping word, Thm. \ref{thm: g-function for mu'_p}}}}&\thead{Uniqueness of $g$-measure\\for $g$-function, Prop. \ref{thm: Unique g-measure}}   & \thead{Parity dependence\\ on b.c.\\ for $p\uparrow 1$, Lem. \ref{lem: Limit g-function n odd}}\\
  \hline
\rowcolor{white}
  \rule{0pt}{5mm}

         \multirow{2}{*}{\thead{Generalized\\House of cards\\Markov chain on $\N_{\geq0}$}} &\thead{\\Explicit form of\\ transition matrix $\Pi_p$, \\ \eqref{eq: Entries transfer matrix Markov chain}}  &\multirow{2}{*}{\thead{Representation\\ $\mu_p'=\tau(\mathds{P}_p^{\text{GHoC}})$,\\ Thm. \ref{thm: Relation TBF and GHoC}}} &  \\\cline{2-2}
     
         &\thead{Unique stationary\\
         measure $\mathds{P}^{\text{GHoC}}_p$}&&\\
    \end{tabular}\medskip
    \begin{tikzpicture}[overlay, remember picture]
                   \scalebox{2}{\node[minimum size=0.5, label=:{$\Downarrow$}] () at (-4.33,-0.05) {};}
                   \scalebox{2}{\node[minimum size=0.5, label=:{$\Downarrow$}] () at (-4.33,-0.73) {};}
                   \scalebox{2.1}{\node[rotate=45,minimum size=0.5] () at (-3.2,-0.39) {$\Downarrow$};}
\end{tikzpicture}
    \caption{Overview of the results on the TBF on the line, depending on the occupation density $p\in (0,1)$.}
    \label{tab:results}
\end{table}   
\endgroup

\noindent\textbf{First view via two-sided specifications: Full construction of specification kernels and large $p$-asymptotics.}  We give a closed formula for the two-sided specification of our process 
which holds on any, possibly disconnected 
finite volumes of arbitrary shapes. We provide a representation which makes 
explicit the correction terms relative to the independent Bernoulli measure. 
We will make further use 
of this representation for 
the construction of the left-interval specification below. 

Next we analyze these expressions and  obtain matching upper and lower bounds on the exponential 
asymptotics in the distance to sets where the boundary conditions are varied.
We pay special attention to the full occupation limit $p\uparrow 1$.   
Interestingly, here the exponential asymptotics become worse, 
and reveals "echoes of non-quasilocality", see Theorem \ref{thm: Quasilocality Gamma'}.

We derive uniqueness of the Gibbs measure for the image specification $\gamma'_p$ via an argument involving the finite-energy condition 
going back to Dobrushin, see \cite{Ge11},  but which is different from 
the Dobrushin uniqueness criterion. 
This holds for all $p\in (0,1)$, and not only in the 
Dobrushin uniqueness region which is strictly smaller. 
\newline

\noindent\textbf{Second view via one-sided specifications.} We discuss in detail the representability of our process via a left-interval specification (LIS). 
A LIS can always be reduced to a single-site object, namely a corresponding g-function. 

To derive the g-function explicitely for our process we proceed 
via limits of two-sided specifications we already obtained in part one. 
Sending the right point of the observation window to infinity we obtain formula \eqref{eq: g-function for the image measure}. 

Also for the one-sided description 
we provide details in particular on the large-density limit, together with combinatorial explanations, see Lemma \ref{lem: Limit g-function n odd} and Remark \ref{rk: Parity dependence on b.c. one-sided view}. 
Rounding up this part, we prove 
uniqueness of the corresponding 
invariant measure 
for our g-function in \eqref{eq: g-function fraction}. Note, that conclusion does not 
rely on the assumption of shift-invariance of 
the measures considered. 
This follows by ensuring summability \eqref{eq: Condition uniqueness g-measure} of the g-function variations
applying the uniqueness criterion of \cite{FeMa05}.\newline  

\noindent\textbf{Third view: House of cards type Markov chain.}
The g-function we obtained in the second part has an intuitive meaning as it can be reinterpreted to provide Markov transition probabilities 
for a (generalized) house of 
cards Markov chain, with state space $\N_0\cup \{\infty\}$. 
This state space has the meaning as the space of possible distances 
to the nearest \text{stopping word} in a conditioning configuration 
in the past, as we will explain, see Subsection \ref{Subsec: Markov chain view: GHoC process}.
See the transition graph in Figure \ref{fig: Markov chain on N}.  
Note that our house-of-cards type Markov  chain 
has modified transition probabilities 
for states $0$ and $1$
w.r.t. to the standard form of a house-of-cards chain presented e.g. in  \cite{ChRe09}.
This view provides a particularly fast argument 
for uniqueness of the invariant measure, however additionally
assuming translation invariance, see Remark \ref{rk: GHoC implies unique g-measure}. The uniqueness statements obtained in the first and the second view are therefore logically stronger. Nevertheless, 
the GHoC Markov chain seems to be the most efficient tool to analyze more properties of the process.

.

\subsubsection*{Organization of the paper}

Section \ref{Sec: Mathematical Preliminaries} contains preliminaries 
concerning one-sided and two-sided consistency, 
the definition of the TBF as an image measure, and the 
introduction of the unfixed area in one dimension. 
The latter concept is relevant to understand in particular the 
form of the two-sided specifications.  
Section \ref{Sec: Main results} presents and explains our results in detail, organized along the three views. 
Section \ref{Sec: Proofs two-sided view} and Section \ref{Sec: Proofs one-sided view} provide the proofs, with 
additional material given in \hyperref[Sec: Appendix A]{Appendix A} and \hyperref[Sec: Appendix B]{B}.

\section{Mathematical preliminaries}\label{Sec: Mathematical Preliminaries}

We will start in this section by giving the necessary background to analyze Gibbs measures on the line. Afterwards, we introduce the model of interest and provide a candidate for a consistent specification. The constructions in Subsection \ref{subsec: GM on line} are based on the ones in the books \cite{Ge11,FV17} and the papers \cite{FeMa04}, \cite{FeMa05}, \cite{FeMa06}. 

\subsection{Gibbs measures on the line}\label{subsec: GM on line}

\textbf{The graph $\Z$.} In this paper, we will regard $\Z$ as a graph $(V,E)$ with vertex set $V=\Z$ and edge set $E=\{\{i,j\}\subset \Z: |i-j|=1\}$. 
Let $\Lambda \subset \Z$. The \textit{outer boundary} of $\Lambda$ is given as the set $\partial_+\Lambda:=\{j\in \Z \setminus \Lambda: j\sim i~\text{for some}~i\in \Lambda\}$ and the \textit{interior boundary} as $\partial_- \Lambda:=\{j\in \Lambda: j \sim i~\text{for some}~i\in \partial_+\Lambda\}$. We will denote with $\Bar{\Lambda}:=\Lambda \cup \partial_+\Lambda$ the \textit{closure} of $\Lambda$ and with $\mathring{\Lambda}:=\Lambda \setminus \partial_-\Lambda$ the \textit{interior} of $\Lambda$. For the sake of brevity, we will regard intervals $I\subset \R$ as subsets of $\Z$, i.e., as $I\cap \Z$, and will write $\Lambda \Subset \Z$ for finite subsets $\Lambda$ of $\Z$. \newline

\textbf{Spin configurations.} We consider i.i.d. Bernoulli variables $\sigma_i$ with \textit{occupation density} $p\in (0,1)$ in each site $i\in \Z$. Thus, we assign to each site $i\in \Z$ a state $\omega_i\in \{0,1\}=:\Omega_0$ and call its collection $\omega:=(\omega_i)_{i\in\Z}$ a \textit{spin configuration}. The set of all possible configurations $\Omega:=\{\omega=(\omega_i)_{i\in\Z}: \omega_i\in \{0,1\} ~\forall i\in \Z\}$ is called \textit{configuration space} and possesses the underlying product $\sigma$-algebra $\mathscr{F}:=\big(\mathcal{P}(\Omega_0)\big)^{\otimes V}$ which is generated by the \textit{spin projections} $\sigma_i:\Omega \rightarrow \Omega_0$, $\omega \mapsto \omega_i$ where $i\in \Z$. We say a site $i\in \Z$ is \textit{occupied} if $\sigma_i=1$ and it is \textit{unoccupied} if $\sigma_i=0$. Let us denote with $\mu_p:=\text{Ber}(p)^{\otimes V}$ the Bernoulli-$p$ product measure on $(\Omega,\mathscr{F})$, and $(\sigma_i)_{i\in \Z}$ is then called the \textit{Bernoulli-$p$ field} on $\Z$. 

If $\Lambda \subset \Z$, we can introduce the restriction of the configuration space on $\Lambda$ as $\Omega_\Lambda:=\{0,1\}^\Lambda$ and the projection onto the spins in $\Lambda$ is then defined as $\sigma_\Lambda: \Omega \rightarrow \Omega_\Lambda$ where $\omega \mapsto \omega_\Lambda:=(\omega_i)_{i\in \Lambda}$. To ease readability, we will omit the projections in the notation, e.g., abbreviating expressions like $\mu_p(\sigma_\Lambda=\omega_\Lambda)$ by $\mu_p(\omega_\Lambda)$, unless the projections are necessary for understanding. It is also useful to consider events in $\mathscr{F}$ which depend only on spins in a certain subset $\Delta \subset \Z$. Therefore, we define $\mathscr{F}_\Delta:=\sigma(\sigma_i,~i\in \Delta)$ as the $\sigma$-algebra on $\Omega$ which is generated by all events occurring in $\Delta$. Let $\Lambda\subset \Delta \subset \Z$, $\omega\in \Omega_\Lambda$ and $\eta \in \Omega_{\Delta \setminus \Lambda}$, then the \textit{concatenation} $\omega\eta\in \Omega_\Delta$ is defined as $\sigma_\Lambda(\omega\eta)=\omega$ and $\sigma_{\Delta \setminus \Lambda}(\omega\eta)=\eta$.\newline

\textbf{Quasilocal specifications.} We would like to investigate the Bernoulli field on the line under certain constraints. In order to describe the behaviour of these constraints on the model, we will need the notion of \textit{specifications}. These are families $\gamma=(\gamma_\Lambda)_{\Lambda \Subset \Z}$ of proper probability kernels each from $(\Omega,\mathscr{F}_{\Lambda^c})$ to $(\Omega,\mathscr{F})$ satisfying a consistency relation. A kernel $\gamma_\Lambda$ is called \textit{proper} if $\gamma_\Lambda(A|\omega)=\mathds{1}_A(\omega)$ for all $A\in \mathscr{F}_{\Lambda^c}$. Let $\Lambda \subset \Delta\Subset \Z$, then the two kernels $\gamma_\Lambda$ and $\gamma_\Delta$ should satisfy the following \textit{consistency relation} $(\gamma_\Delta \gamma_\Lambda)(A|\omega)=\gamma_\Delta(A|\omega)$ for all $A\in \mathscr{F}$ and $\omega\in \Omega$. 

Our goal is to show that the influence of perturbations in the boundary condition decreases with the distance to the given finite observation window $\Lambda$. Therefore, we introduce a \textit{local} function $f:\Omega \rightarrow \R$ which is defined to be $\mathscr{F}_\Lambda$-measurable for some subset $\Lambda \Subset \Z$. Then, we call a specification $\gamma=(\gamma_\Lambda)_{\Lambda\Subset \Z}$ \textit{quasilocal} if for each $\Lambda \Subset \Z$ and every local function $f:\Omega \rightarrow \R$ the following holds
\begin{equation*}
    \lim_{n \rightarrow \infty} \sup_{\substack{\zeta,\eta\in \Omega\\\zeta_{\Lambda_n}=\eta_{\Lambda_n}}}\big|\int f(\omega)\gamma_\Lambda(d\omega|\zeta)-\int f(\omega)\gamma_\Lambda(d\omega|\eta)\big|=0
\end{equation*}
where $(\Lambda_n)_{n\in \N}$ is a \textit{cofinal sequence}, i.e. $\Lambda_n \subset \Lambda_m \Subset \Z$ for all $n\leq m$ and $\bigcup^\infty_{n=1} \Lambda_n=\Z$.\newline

\textbf{Gibbs measures and the quasilocal Gibbs property.} We are interested in infinite-volume measures being consistent with these specification kernels in the sense that these kernels provide a regular conditional distribution for this measure. In more detail, let $\gamma=(\gamma_\Lambda)_{\Lambda \Subset \Z}$ be a specification, and $\mu\in \mathscr{M}_1(\Omega,\mathscr{F})$, a probability measure on the whole line. Then we call $\mu$ a \textit{Gibbs measure} for $\gamma$ if it satisfies the \textit{DLR-equation}
\begin{equation*}
    \mu(A|\mathscr{F}_{\Lambda^c})=\gamma_\Lambda(A|\cdot)~~~~\mu\text{-almost surely}
\end{equation*}
for all $\Lambda \Subset \Z$ and $A\in \mathscr{F}$. We denote with $\mathscr{G}(\gamma)$ the set of all Gibbs measures for $\gamma$. Finally, $\mu$ is called \textit{quasilocally Gibbs} if there exists a quasilocal specification $\gamma$ such that $\mu\in \mathscr{G}(\gamma)$.\newline

\textbf{One-sided specifications.} Let us denote with $\mathcal{S}_b$ the set of finite intervals and introduce for each $\Lambda\in \mathcal{S}_b$ the abbreviations $m_\Lambda:=\max\Lambda$ as well as $l_\Lambda:=\min \Lambda$. Furthermore, we write $\Lambda_-:=(-\infty,l_\Lambda]$ and $\Lambda_+:=[m_\Lambda,+\infty)$ as well as $\mathscr{F}_{\leq x}:=\mathscr{F}_{(-\infty,x]}$ and $\mathscr{F}_{<x}:=\mathscr{F}_{(-\infty,x)}$. Let $\rho=(\rho_\Lambda)_{\Lambda\in \mathcal{S}_b}$ be a family of probability kernels $\rho_\Lambda:\mathscr{F}_{\leq m_\Lambda}\times \Omega\rightarrow [0,1]$. We call $\rho$ a \textit{left interval-specification (LIS)} on $(\Omega,\mathscr{F})$ iff it satisfies the following properties: For each $A\in \mathscr{F}_{\leq m_\Lambda}$, the map $\rho_\Lambda(A|\cdot)$ is $\mathscr{F}_{<l_\Lambda}$-measurable and for each $B\in \mathscr{F}_{<l_\Lambda}$ and $\omega\in \Omega$, we have $\rho_\Lambda(B|\omega)=\mathds{1}_B(\omega)$. Furthermore, for each $\Lambda,\Delta\in \mathcal{S}_b$ with $\Lambda\subset\Delta$ it has to satisfy the following consistency equation $(\rho_\Delta\rho_\Lambda)(f|\omega)=\rho_\Delta(f|\omega)$ for each function $f$ which is $\mathscr{F}_{\leq m_\Lambda}$-measurable and each configuration $\omega\in \Omega$.\newline

\textbf{g-measures and g-functions.} A probability measure $\mu$ on $(\Omega,\mathscr{F})$ is said to be \textit{consistent} with a left interval-specification $\rho$ if $\mu\rho_\Lambda(f)=\mu(f)$ for each $\Lambda\in \mathcal{S}_b$ and each function $f$ which is $\mathscr{F}_{\leq m_\Lambda}$-measurable.

Note that the single-site kernels $(\rho_{\{i\}})_{i\in \Z}$ of a LIS already determine the entire LIS $\rho=(\rho_\Lambda)_{\Lambda\in \mathcal{S}_b}$ in a unique kind of way, see Theorem 3.1 in \cite{FeMa05} or Theorem 3.2 in \cite{FeMa04}. If the model is translation invariant, it suffices to compute the kernel at the origin which we denote by $g(\omega):=\rho_{\{0\}}(\omega_{\{0\}}|\omega_{\{0\}^c})$ for all $\omega\in \Omega$. We call this function the \textit{g-function} and any translation-invariant measure $\mu$ that is consistent with the associated LIS $\rho$ is called a \textit{g-measure}.

\subsection{The projection to non-isolation}\label{subsec: Projection to non-iso}

Define the projection to the non-isolated configurations as the map $T:\Omega \rightarrow \Omega$ which is given as
\begin{equation}\label{eq: Map T}
    (T\omega)_i:=\omega'_i:=\omega_i \bigg(1-\prod_{j\in \partial i}(1-\omega_j)\bigg)
\end{equation}
in a site $i \in \Z$ and for a configuration $\omega \in \Omega$. An application of the map $T$ to a particular configuration $\omega$ is depicted in Figure \ref{fig: Transformation T}. For the brevity of notation, we will write $T_\Lambda(\omega)$ instead of $\big(T(\omega))_\Lambda$ which is the restriction of the map $T$ on a subset $\Lambda \subset \Z$. Note that $T_\Lambda:\Omega \rightarrow \Omega_{\Lambda}$ is $\mathscr{F}_{\Bar{\Lambda}}$-measurable for any $\Lambda \subset \Z$. In order to investigate the transformed model, we will use a two-layer representation. Let us denote with $\Omega':=T(\Omega)\subset \Omega$ the \textit{second-layer} which is the set of configurations obeying the non-isolation constraint and define $\mathscr{F}'_\Lambda:=\mathscr{F}_\Lambda \cap \Omega'$ for any $\Lambda \subset \Z$. We will consider for each $\omega'\in \Omega'$ the inverse image $T^{-1}(\omega')$ and call this the (configuration space of) the \textit{constrained first-layer model of $\omega'$}. In the particular case of the all-unoccupied configuration $0'\in \Omega'$, we obtain the first-layer model constrained on isolation on the full line $\Z$. The symbols describing second-layer quantities will carry a prime, to distinguish them from the objects in the first-layer. Our objective is to analyze the \textit{thinned Bernoulli field (TBF)} described by $\mu_p':=\mu_p \circ T^{-1}$ on $(\Omega',\mathscr{F}'_\Z)$ for all $p\in (0,1)$.

\begin{figure}[ht!]
\centering
\scalebox{0.5}{
\begin{tikzpicture}[every label/.append style={scale=1.3},fill fraction/.style={path picture={
\fill[#1] 
(path picture bounding box.south) rectangle
(path picture bounding box.north west);
}},
fill fraction/.default=gray!50
]
    \node[shape=circle,draw=orange, fill=orange, minimum size=0.5cm] (-B) at (-1,0) {};
    \node[shape=circle,draw=orange, fill=orange, minimum size=0.5cm] (-C) at (-2,0) {};
    \node[shape=circle,draw=orange,  minimum size=0.5cm] (-D) at (-3,0) {};
    \node[shape=circle,draw=orange, minimum size=0.5cm] (-E) at (-4,0) {};
    \node[shape=circle,draw=orange, fill=orange,  minimum size=0.5cm] (-F) at (-5,0) {};
    \node[shape=circle,draw=orange, fill=orange, minimum size=0.5cm] (-G) at (-6,0) {}; \node[shape=circle,draw=orange, fill=orange, minimum size=0.5cm] (-H) at (-7,0) {};
    \node[shape=circle,draw=orange,  minimum size=0.5cm] (-I) at (-8,0) {};
    \node[shape=circle,draw=black,fill fraction=black,  minimum size=0.5cm] (-J) at (-9,0) {};
    \node[shape=circle,draw=black,fill fraction=black,  minimum size=0.5cm] (-K) at (-10,0) {};
    \node[shape=circle,draw=orange,  minimum size=0.5cm] (-L) at (-11,0) {};
    \node[shape=circle,draw=orange, fill=orange, minimum size=0.5cm] (-M) at (-12,0) {};
    \node[shape=circle,draw=orange, fill=orange, minimum size=0.5cm] (-O) at (-13,0) {};
    \node[shape=circle,draw=orange, minimum size=0.5cm] (A) at (0,0) {};
    \node[shape=circle,draw=black,fill fraction=black,  minimum size=0.5cm] (B) at (1,0) {};
    \node[shape=circle,draw=black,fill fraction=black,minimum size=0.5cm] (C) at (2,0) {};
    \node[shape=circle,draw=black, fill fraction=black,   minimum size=0.5cm] (D) at (3,0) {};
    \node[shape=circle,draw=orange,  minimum size=0.5cm] (E) at (4,0) {};
    \node[shape=circle,draw=orange, fill=orange,  minimum size=0.5cm] (F) at (5,0) {};
    \node[shape=circle,draw=orange, fill=orange,minimum size=0.5cm] (G) at (6,0) {};
    \node[shape=circle,draw=orange, fill=orange, minimum size=0.5cm] (H) at (7,0) {};
    \node[shape=circle,draw=orange,  minimum size=0.5cm] (I) at (8,0) {};
    \node[shape=circle,draw=black, fill fraction=black,  minimum size=0.5cm] (J) at (9,0) {};
    \node[shape=circle, draw=orange,  minimum size=0.5cm] (K) at (10,0) {};
    \node[shape=circle,draw=orange, fill=orange, minimum size=0.5cm] (L) at (11,0) {};
    \node[shape=circle,draw=orange, fill=orange, minimum size=0.5cm] (M) at (12,0) {};
    \node[shape=circle,draw=orange, fill=orange,minimum size=0.5cm] (O) at (13,0) {};
     \scalebox{1.4}{\node[] () at (-3,0.8) {{\color{orange}$\overline{\Theta}$}};}

    \draw[-] (-O)--(-13.7,0){};
    \draw [-] (-O) -- (-M);
     \draw [-] (-L) -- (-M);
     \draw [-] (-L) -- (-K);
    \draw [-] (-J) -- (-K);
    \draw [-] (-I) -- (-J);
    \draw [-] (-H) -- (-I);
    \draw [-] (-G) -- (-H);
    \draw [-] (-F) -- (-G);
    \draw [-] (-F) -- (-E);
    \draw [-] (-E) -- (-D);
    \draw [-] (-D) -- (-C);
    \draw [-] (-C) -- (-B);
    \draw [-] (-B) -- (A);
    \draw [-] (A) -- (B);
    \draw[-](O)--(13.7,0){};
    \draw [-] (M) -- (O);
    \draw [-] (M) -- (L);
    \draw [-] (K) -- (L);
    \draw [-] (K) -- (J);
    \draw [-] (J) -- (I);
    \draw [-] (H) -- (I);
    \draw [-] (G) -- (H);
    \draw [-] (G) -- (F);
    \draw [-] (F) -- (E);
    \draw [-] (E) -- (D);
    \draw [-] (D) -- (C);
    \draw [-] (C) -- (B);

\end{tikzpicture}
}
\small \caption{A second-layer configuration $\omega' \in \Omega'$ on the line $\Z$. 
Clusters of occupied sites $\Theta$ prescribed by $\omega'$ are in full orange, all other sites are empty in $\omega'$. 
The fixed area $\overline{\Theta}$ of this configuration is highlighted in orange and the remaining half-black and half-white colored dots correspond to the unfixed areas in $\mathscr{U}$. 
They may be occupied or unoccupied, while obeying the isolation constraint, in the underlying first-layer configuration.
}
\label{fig: Fixed area}
\end{figure}

\begin{remark}\label{rk: Fixed area}
    Every configuration $\omega' \in \Omega'$ in the second-layer is uniquely determined by its connected subsets of occupied sites in $\Z$. Recall that $\Omega'$ contains the non-isolated configurations and thus each subset consists of occupied clusters of size at least two. All of these clusters are surrounded by unoccupied sites. Let us denote the union of the clusters of ones together with the surrounding layer of zeros by $\overline{\Theta} \subset \Z$, see Figure \ref{fig: Fixed area}. Thus, every configuration $\omega \in T^{-1}(\omega')$ has to coincide with $\omega'$ on $\overline{\Theta}$. Furthermore, each configuration $\omega \in T^{-1}(\omega')$ contains only isolated sites outside of $\overline{\Theta}$ regarding a boundary of zeros, compare Figure \ref{fig: Fixed area}.
\end{remark}

Let us rigorously define the objects introduced in Remark \ref{rk: Fixed area}.

\begin{definition}\label{def: Fixed area}
    Let $\omega' \in \Omega'$ be a non-isolated configuration. The \textbf{fixed area} of $\omega'$ is defined as $\overline{\Theta}=\Theta\cup \partial_+\Theta$ where  
   \begin{equation*}
       \Theta=\Theta(\omega'):= \{x\in \Z: \omega'_x=1\} \subset \Z
   \end{equation*}
   denotes the set of occupied sites, 
   and we say that the spins in the first-layer model of $\omega'$ are \textbf{fixed} in $\overline{\Theta}$. The complement $(\overline{\Theta})^c$ is called the \textbf{unfixed area} of $\omega'$. For the brevity of notation, we will write $\overline{\Theta}_{\Lambda}:=\overline{\Theta}\cap \Lambda$ for a subset $\Lambda \subset \Z$. Moreover, let us denote by $\mathscr{U}=\mathscr{U}(\omega')$ the set of connected components of $(\overline{\Theta})^c$. 
\end{definition}

\section{Main results}\label{Sec: Main results}

In this section, we present the main results of the paper concerning the TBF on the line. To analyze this model, we adopt three complementary perspectives:
\begin{enumerate}[label=\roman*)]
    \item the \textbf{two-sided view} via Gibbs measures and Specifications (Subsection \ref{Subsec: Two-sided view: Gibbs measure}),
    \item the \textbf{one-sided view} via g-measures and LIS (Subsection \ref{Subsec: One-sided view: g-measure}),
    \item the \textbf{Markov chain view} via the GHoC Markov process and its stationary distribution (Subsection \ref{Subsec: Markov chain view: GHoC process}).
\end{enumerate}
These three perspectives build upon one another, and the  results for each view, as well as their interrelations, are summarized in Table \ref{tab:results}.

\subsection{Two-sided view: Gibbs measure}\label{Subsec: Two-sided view: Gibbs measure}

 In the two-sided view, the TBF is locally described by a specification $\gamma'_p$, whose kernels $\gamma'_{p,\Lambda}$ give the conditional probabilities of $\mu_p'$ inside a finite observation set $\Lambda$, given the configuration outside of $\Lambda^c$. We begin by providing an explicit specification $\gamma'_p$ for the TBF $\mu_p'$.

\begin{theorem}\label{thm: Existence image specification}     
    For each $p\in (0,1)$, there exists a  specification $\gamma_p'=(\gamma_{p,\Lambda}')_{\Lambda\Subset \Z}$ 
    as a family of consistent kernels on $\Omega'$
    for the TBF $\mu_p'$. Let $\Lambda\Subset \Z$ be a finite observation set and $\omega'\in \Omega'$ be a boundary condition, then each kernel $\gamma'_{p,\Lambda}$ has the form
\begin{equation}\label{eq: Kernels image specification}
    \gamma'_{p,\Lambda}(\omega'_\Lambda|\omega'_{\Lambda^c})=\frac{1}{Z_\Lambda(\omega'_{\Lambda^c})}p^{|\Theta\cap\bar{\Lambda}|}(1-p)^{|\partial_+ \Theta \cap \bar{\Lambda}|}\prod_{U\in \mathscr{U}_\Lambda(\omega')}Z(U)
\end{equation}
where $Z_\Lambda(\omega'_{\Lambda^c})$ is the partition function chosen to obtain a probability kernel on $\Omega'$. Recall that $\overline{\Theta}$ is the fixed area of $\omega'$, compare Definition \ref{def: Fixed area}. Furthermore,
\begin{equation}\label{def: Partition functions unfixed area}
    Z(U):=(1-p)^{-1}\cdot\begin{cases}
        Q^{|U|+1}(0,0)~~&\text{if}~~|U|<\infty,\\
        (\lambda_{\text{PF}})^{|U\cap V_-|},~~&\text{if}~~|U|=+\infty~~\text{and}~~\sup U<\infty,\\
        (\lambda_{\text{PF}})^{|U\cap V_+|},~~&\text{if}~~|U|=+\infty~~\text{and}~~\inf U<\infty,\\
       D\cdot(\lambda_{\text{PF}})^{|V_-\cap V_+|},~~&\text{else}
        
    \end{cases}
\end{equation}
denotes the partition function on the connected component $U\in \mathscr{U}_\Lambda$ of the unfixed area for $\omega'$, see \eqref{eq: restricted unfixed areas}. Here, $Q=Q(p)$ is the symmetric transfer operator of the model constrained on isolation, given below in \eqref{eq: Transfer matrix Q}, $\lambda_{\text{PF}}=\lambda_{\text{PF}}(p)$ its Perron-Frobenius eigenvalue, see \eqref{eq: Eigenvalues of Q} and $D=D(p):=\frac{(1-p)(\lambda_{\text{PF}}+2p)}{\lambda_{\text{PF}}^3}$. Furthermore, we denote by $V_-:=[\min \overline{\Lambda},+\infty)$ and $V_+:=(-\infty,\max \overline{\Lambda}]$.
\end{theorem}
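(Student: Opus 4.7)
The approach exploits the two-layer representation of $T$: for any second-layer configuration $\omega'\in\Omega'$, the preimage $T^{-1}(\omega')$ consists exactly of those first-layer $\omega$ which coincide with $\omega'$ on the fixed area $\overline{\Theta}$ and contain only isolated occupied sites on the unfixed area $(\overline{\Theta})^c$, each flanked by the zero buffer prescribed by $\partial_+\Theta$. Since $\mu_p$ is a Bernoulli product, the $\mu_p$-weight of this preimage factorizes into a fixed-area product $p^{|\Theta\cap\overline{\Lambda}|}(1-p)^{|\partial_+\Theta\cap\overline{\Lambda}|}$ and a product of hard-core partition functions, one per connected component $U\in\mathscr{U}_\Lambda(\omega')$ of the unfixed area. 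This decomposition is the structural source of the formula in \eqref{eq: Kernels image specification}.

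To derive $Z(U)$ I would first truncate to a finite volume $\Delta\supset\overline{\Lambda}$ and express the sum over admissible first-layer values on $U\cap\Delta$ through the transfer operator $Q$ of the isolation-constrained process. For finite $U$ this yields the entry $Q^{|U|+1}(0,0)$ directly, with the $+1$ in the exponent and the compensating prefactor $(1-p)^{-1}$ encoding that one of the two flanking zeros of $U$ has already been counted in $(1-p)^{|\partial_+\Theta\cap\overline{\Lambda}|}$. Passing to the limit $\Delta\uparrow\Z$ on components that extend to infinity is the delicate step: by primitivity of $Q$ and the Perron--Frobenius theorem one has $Q^n(0,0)\sim c\,(\lambda_{\text{PF}})^n$, so in the semi-infinite case the truncated weight behaves as a constant times $(\lambda_{\text{PF}})^{|U\cap V_{\pm}|}$, and the $n$-independent constant cancels against the normalization $Z_\Lambda(\omega'_{\Lambda^c})$, recovering the middle two lines of \eqref{def: Partition functions unfixed area}. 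In the bi-infinite case both sides contribute projections onto the Perron--Frobenius eigenvectors, and tracking the boundary components through the resulting rank-one asymptotic produces the explicit constant $D(p)=(1-p)(\lambda_{\text{PF}}+2p)/\lambda_{\text{PF}}^3$, which can be verified directly from the forms of $Q$ and $\lambda_{\text{PF}}$ in \eqref{eq: Transfer matrix Q} and \eqref{eq: Eigenvalues of Q}.

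With the kernels in hand, properness of each $\gamma'_{p,\Lambda}$ is immediate by construction, and to prove consistency of the family I would observe that, for $\Lambda\subset\Delta\Subset\Z$, both $\gamma'_{p,\Delta}$ and $\gamma'_{p,\Delta}\gamma'_{p,\Lambda}$ are obtained as the same marginal of the two-layer finite-volume measure, so consistency on cylinder events holds pre-limit and then passes through the thermodynamic limit since all the spectral convergences above are uniform on cylinder events. The DLR identity $\mu_p'(\,\cdot\,|\mathscr{F}'_{\Lambda^c})=\gamma'_{p,\Lambda}$ then follows from the same cylinder computation. The main obstacle will be the bi-infinite case: obtaining the correct prefactor $D$ requires careful spectral bookkeeping of the boundary vectors against the left and right Perron--Frobenius eigenvectors of $Q$, and ensuring that the subleading corrections truly cancel in the normalization. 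Additional care is needed throughout because $\mu_p'$ is not non-null (isolated configurations are forbidden), so the specification is only defined for boundary conditions $\omega'\in\Omega'$ and the usual Gibbsian arguments that assume non-nullness cannot be invoked directly.
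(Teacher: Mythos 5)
Your proposal is correct and follows essentially the same route as the paper: finite-volume two-layer conditional probabilities with the Bernoulli weight factorizing over the fixed area and the connected components of the unfixed area, the transfer matrix $Q$ for the isolation-constrained layer, Perron--Frobenius asymptotics (the paper's Lemma \ref{lem: Limit Matrix Q}) for the semi-infinite and bi-infinite components with the eigenvector constants absorbed into $Z_\Lambda$, and consistency inherited from the pre-limit kernels (the paper delegates this last step to Lemma 3.3 of \cite{JaKu23}, which additionally requires the limit to be independent of the chosen first-layer boundary configuration $\omega\in T^{-1}(\omega')$ — the uniformity your sketch implicitly relies on).
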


Let us give a comment on the expression of the specification kernels $\gamma'_{p,\Lambda}$ in \eqref{eq: Kernels image specification}.

\begin{remark}
Under Bernoulli site-percolation, the probability of a configuration $\omega'\in \Omega'$ restricted to its fixed area $\overline{\Theta}$ is equal to the Bernoulli weights of $\omega'$ on $\overline{\Theta}$. In particular, each site $i\in \Theta$ belongs to an occupied cluster of size at least two and carries a weight $p$, while each site $i\in \partial_+\Theta$, in the outer boundary of such clusters, carries a weight $1-p$. The probabilities associated with the unfixed areas $U\in \mathscr{U}$ are given by $Z(U)$. These weights are partition functions of the model constrained on isolation on the connected subset $U$ with a fully unoccupied boundary condition outside of $U$, see Remark \ref{rk: Fixed area} and Definition \ref{def: Fixed area} for details on the fixed and unfixed areas. Note that the exponents of $\lambda_{\text{PF}}$ in the definition \eqref{def: Partition functions unfixed area} of $Z(U)$ are finite. They represent the length of the finite subset of $U$ which remains after removing the infinite parts outside of $(\min \Lambda,\max\Lambda)$. Moreover, given a boundary condition $\omega'_{\Lambda^c}$, the numerator of $\eqref{eq: Kernels image specification}$ takes into account the weights of all spins that may influence the spins in $\Lambda$ independently of the given configuration in $\omega'_{\Lambda}$, see the definition of the influencing set in \eqref{eq: Influence set} and Remark \ref{rk: Influence set}. This explains the choice of $\overline{\Theta}_\Lambda$ and $\mathscr{U}_\Lambda$.

Note 
that the complicated looking 
last three lines of the definition of $Z(U)$ must also be considered, as a specification 
needs to be defined for all possible boundary conditions in $\Omega'$. Their form
ensures that  
the specification is 
quasilocal as a function of the boundary condition, to be shown below. 
For a typical 
boundary condition (which has finite components for the unfixed area) only the first line in \eqref{def: Partition functions unfixed area} matters. 
\end{remark}

Let us continue to aim for the analysis of the exponential asymptotics of $\gamma'_p$ in the distance to the sets where the boundary conditions are varied. In more detail, we investigate 
\begin{equation}\label{eq: Sensitivity of b.c. variations}
        s(\Lambda,\Delta):=\sup_{\substack{\omega',\eta'\in \Omega'\\\omega'_\Delta=\eta'_\Delta}} \big|
    \gamma'_{p,\Lambda}(\omega'_\Lambda|\omega'_{\Lambda^c})-\gamma'_{p,\Lambda}(\omega'_\Lambda|\eta'_{\Lambda^c})\big|
    \end{equation}
    for $\Lambda\subset \Delta\Subset \Z$ which describes the sensitivity of probabilities on $\Lambda$ with respect to boundary condition variations outside of $\Delta$. The following theorem shows that $\gamma'_p$ possesses matching upper and lower bounds in the exponential asymptotics. 
    
    \begin{theorem}\label{thm: Quasilocality Gamma'}
     Let $p \in (0,1)$, $\gamma'_p$ be the specification for the TBF $\mu_p'$ given in Theorem \ref{thm: Existence image specification}, and $l,r,L,R \in\mathbb{Z}$ satisfying $L< l-1\leq r+1< R$.
 Then, the following finite-volume estimates hold 
on finite observation windows $[l,r]$ w.r.t 
boundary condition variations outside of $[L,R]$
     \begin{equation}\label{eq: Bounds on sensitivity}
      C_-(l,r) \cdot |a|^{n}\leq  s([l,r],[L,R])\leq C_+(l,r) \cdot |a|^{n},
    \end{equation}
    where $a=a(p)$ is the eigenvalue ratio of the transfer matrix $Q$ corresponding to the model constrained on isolation, see \eqref{eq: Fraction eigenvalues} and $n:=\min\{l-L,R-r\}$.
    The prefactors $C_-(l,r)$ and $C_+(l,r)$ depend on the size of the observation window $[l,r]$ itself and satisfy
    \begin{equation*}
        C_-(l,r)= A_-\cdot(b_-)^{r-l+1}~~\text{and}~~C_+(l,r)= A_+\cdot(b_+)^{r-l+1}
    \end{equation*}
    where $A_-,b_-\in(0,1)$ are given in \eqref{eq: Constants b_- and A_-} and $b_+\in(0,1)$ as well as $A_+$ are given in \eqref{eq: Constants b_+ and A_+}.
\end{theorem}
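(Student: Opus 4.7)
The plan is to work directly from the explicit product form \eqref{eq: Kernels image specification} and reduce the sensitivity estimate to the spectral behaviour of the transfer matrix $Q$ of the isolation-constrained model. First I would note that for two boundary conditions $\omega'$ and $\eta'$ agreeing on $[L,R]$, every factor in \eqref{eq: Kernels image specification} is identical except for the partition functions $Z(U)$ corresponding to the (at most two) unfixed components $U\in \mathscr{U}_{[l,r]}(\omega')$ that touch the boundary of $[l,r]$ and extend through $[L,R]\setminus[l,r]$ beyond $[L,R]$. The length of such a $U$ differs between $\omega'$ and $\eta'$ only in its ``far'' portion, and it is precisely this difference that controls the sensitivity.

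For the upper bound I would use the spectral decomposition
\[
Q^{m}(0,0)=c_1\,\lambda_{\text{PF}}^{m}+c_2\,\lambda_2^{m}=c_1\,\lambda_{\text{PF}}^{m}\bigl(1+(c_2/c_1)\,a^{m}\bigr)
\]
of the symmetric transfer matrix, where $a=\lambda_2/\lambda_{\text{PF}}\in(-1,1)$. Since $Z(U)=(1-p)^{-1}Q^{|U|+1}(0,0)$ in the finite case, a short computation shows that the ratio $Z(U_\omega)/Z(U_\eta)$, in which the two unfixed components agree near $[l,r]$ up to distance $n$ and only extend differently thereafter, equals $\lambda_{\text{PF}}^{|U_\omega|-|U_\eta|}\bigl(1+O(|a|^{n})\bigr)$. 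Inserting this back into \eqref{eq: Kernels image specification} and absorbing the common $\lambda_{\text{PF}}$-factors into the normalising partition function, the relative sensitivity becomes $O(|a|^{n})$, uniformly in $\omega'_{[l,r]}$. The absolute sensitivity is then bounded by this relative factor times the pointwise maximum of $\gamma'_{p,[l,r]}(\omega'_{[l,r]}|\,\cdot)$, which is itself a product of $r-l+1$ per-site contributions bounded by some explicit $b_+\in(0,1)$, yielding the prefactor $A_+\,b_+^{r-l+1}$.

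For the lower bound I would saturate this estimate by exhibiting concrete configurations. Take $\omega'_{[l,r]}\equiv 0$, the empty configuration, so that the unfixed component containing $[l,r]$ is as long as possible and the kernel is maximally sensitive to far boundary changes. Fix $\omega'_\Delta$ to carry no occupied cluster on the ``short'' side of $[L,R]\setminus[l,r]$ (the side realising $n=\min\{l-L,R-r\}$), and let $\omega'_{\Delta^c}$ be empty while $\eta'_{\Delta^c}$ carries a single occupied cluster placed exactly at distance $n$ from $[l,r]$. The two kernel values then reduce to ratios involving $Q^{m}(0,0)$ and $Q^{m-n}(0,0)\,\lambda_{\text{PF}}^{n}$ up to the common normalisation; by the spectral expansion above their difference equals a nonzero constant multiple of $|a|^{n}$ times the leading term, and dividing by the partition function produces the additional factor $b_-^{r-l+1}$ with $b_-\in(0,1)$.

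The main obstacle is twofold. First, one must verify the non-degeneracy $c_2\neq 0$ for the chosen boundary data in order to conclude the lower bound; this amounts to an explicit eigenvector computation on the $2\times 2$ matrix $Q$ and its dependence on $p$, and ensures that the sub-leading spectral mode is truly excited by the constructed configurations. Second, the unfixed components may be one-sidedly infinite (the second and third lines of \eqref{def: Partition functions unfixed area}), so the spectral estimate for $Q^{m}(0,0)$ has to be replaced by its boundary-eigenvector analogue $\lambda_{\text{PF}}^{|U\cap V_\pm|}$; these cases require a separate but parallel estimate that must be matched continuously with the finite case so that a single uniform rate $|a|^{n}$, together with the advertised prefactors $A_\pm\,b_\pm^{r-l+1}$, emerges.
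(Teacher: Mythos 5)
Your upper-bound strategy essentially coincides with the paper's: there too the kernel is written as (weight of the window interior) times (normalized weights of the two outer unfixed areas), the explicit form of $Q^m$ in \eqref{eq: N-th power Q} isolates $a^m$ as the sole source of far-boundary dependence, and the prefactor $(b_+)^{r-l+1}$ with $b_+=\max\{p,\lambda_{\text{PF}}\}$ comes from bounding the interior weight. Two things you should make explicit. First, the global normalisation $Z_\Lambda(\omega'_{\Lambda^c})$ also changes with the far boundary; the paper deals with this by showing that $Z_\Lambda$ factorises over the left and right sides (cf.\ \eqref{eq: Quasilocality eq 1}), so that each $Z(U_\pm)$ is paired with its own share of $Z_\Lambda$ into the functions $f(i,j)$ — your ``relative sensitivity'' bookkeeping is equivalent, but it needs this factorisation, or a separate $1+O(|a|^n)$ estimate for the partition-function ratio, to close. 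Second, the case analysis you defer (finite versus one- or two-sidedly infinite outer areas, $U_-=U_+$, only one side perturbed, $U_+=\emptyset$) is where most of the actual work in the paper's proof lies; it is routine but not negligible.

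For the lower bound your route is genuinely different. You put the window inside a long unfixed area (all-empty window) and move a distant cluster, whereas the paper takes the window to be occupied (hence fixed) with a single finite unfixed area of length $n-1$ versus $n$ to its left, which makes the difference computable in closed form, \eqref{eq: Lower bound quasilocality eq 2}, and produces the prefactor $p^{r-l+1}$ directly from the Bernoulli weights on the window. Your construction does work — the non-degeneracy you flag reduces to $\lambda_{\text{r}}+p>0$, which holds for all $p\in(0,1)$ — but the empty window then contributes a factor $\lambda_{\text{PF}}^{\,r-l+O(1)}$ rather than $p^{r-l+1}$, and since neither of $p$, $\lambda_{\text{PF}}$ dominates the other uniformly in $p$, you would end up proving the bound with a different pair $(A_-,b_-)$ than the one the statement fixes via \eqref{eq: Constants b_- and A_-}. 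That is a qualitatively equivalent result (matching rate $|a|^n$, geometric prefactor in the window size), but to obtain the stated constants you should switch to a test pair with a fixed occupied window as in Figure \ref{fig: Configs lower bound Quasilocality}.
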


\begin{remark}\label{rk: Echoe of quasilocality two-sided}
The case of $p \uparrow 1$, in which the occupation becomes denser and denser, appears on first sight to be trivial, as there are fewer and fewer isolated sites. However, we see in \eqref{eq: Bounds on sensitivity} that the precise rate of dependence on b.c. variation deteriorates, as $|a|$ goes to one in the limit 
of full occupation. 
So the case of almost full occupation is the most non-trivial one, and we see an \say{echo of non-quasilocal behavior}.

 Note that the main asymptotics is provided by $|a|^n$. This can be understood as the eigenvalue ratio $a$ controls the transmission of information over distance $n$ in the hidden model, which is the model constrained on isolation. This hidden model transports boundary condition variations for conditional probabilities of the TBF. 

The constants $b_-$ and $b_+$ are related to the weights {\normalfont on} the observation window $[l,r]$. The constant $b_-$ has the character of a non-nullness constant 
on the space of non-isolates, whereas $(b_+)^{r-l+1}$ provides an upper bound on the maximal possible weight in the observation window. This will become clear in the proofs. 
\end{remark}

Note that our specification is supported on the smaller space $\Omega'$ (the subspace of non-isolated configurations). One may worry that this causes problems 
in the application of the finite-energy criterion, which is an 
efficient tool to prove one-dimensional uniqueness. However, it is available 
(in the standard reference \cite{Ge11}) only under the assumption that the specifications are defined on the \textit{full product space}, 
and have the corresponding finite-energy property in that space. 
Therefore, for the convenience of the reader we include a 
discussion in \hyperref[Sec: Appendix B]{Appendix B} which explains 
why we can still apply this tool in our case.

\begin{proposition}\label{thm: Uniqueness of the Gibbs measure}
    Consider the specification $\gamma'_p$ given in Theorem \ref{thm: Existence image specification}. The Bernoulli product measure $\mu_p$ under the transformation $T$, defined in \eqref{eq: Map T}, is the unique Gibbs measure $\mu'_p$ for the specification $\gamma'_p$ for every $p\in (0,1)$. More precisely, the model satisfies the finite energy condition: there exists a constant $C>0$ such that for each cylinder event $A \in \mathscr{F}'$ one can find a subset $\Lambda \Subset \Z$ with the property that
    \begin{equation}\label{Ineq: finite energy cond}
    \gamma'_{p,\Lambda}
    (A|\zeta') \geq C~ \gamma'_{p,\Lambda}(A|\eta')
\end{equation}
for all configurations $\zeta', \eta' \in \Omega'$. 
\end{proposition}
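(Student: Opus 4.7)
The plan is to reduce the uniqueness statement to the finite-energy bound \eqref{Ineq: finite energy cond}: combined with the quasilocality of Theorem \ref{thm: Quasilocality Gamma'}, this will yield $|\mathscr{G}(\gamma_p')|=1$ via the standard one-dimensional finite-energy argument. Membership $\mu_p' \in \mathscr{G}(\gamma_p')$ comes for free from the construction in Theorem \ref{thm: Existence image specification}, since the kernels $\gamma'_{p,\Lambda}$ were designed precisely to reproduce the conditional distributions of $\mu_p \circ T^{-1}$, so the DLR-equation holds by construction.

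\textbf{Finite-energy bound.} For \eqref{Ineq: finite energy cond}, the plan is to exploit the explicit representation \eqref{eq: Kernels image specification}. Given a cylinder event $A \in \mathscr{F}'_{\Lambda_0}$, I would choose $\Lambda \supset \Lambda_0$ to be a finite interval with a small buffer on each side, and examine
\[
\gamma'_{p,\Lambda}(\omega'_\Lambda | \omega'_{\Lambda^c}) = \frac{1}{Z_\Lambda(\omega'_{\Lambda^c})}\, p^{|\Theta \cap \overline{\Lambda}|}(1-p)^{|\partial_+\Theta \cap \overline{\Lambda}|} \prod_{U \in \mathscr{U}_\Lambda(\omega')} Z(U).
\]
Since $|\overline{\Lambda}|$ is finite, every factor in the numerator lies in an explicit compact subinterval of $(0,\infty)$ that depends only on $p$ and $|\Lambda|$: the Bernoulli prefactor lies between $\min(p,1-p)^{|\overline{\Lambda}|}$ and $1$, and each $Z(U)$ is a positive finite combination of entries of the transfer matrix $Q$, of $\lambda_{\text{PF}}$, and of $D$, all strictly positive for $p\in(0,1)$, with exponents bounded by $|\overline{\Lambda}|$. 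The normalization $Z_\Lambda(\omega'_{\Lambda^c})$ is bounded below by the weight of the all-zero configuration on $\Lambda$, which is always compatible with $\Omega'$ because adding zeros cannot create isolated occupied sites, and above by the number of admissible configurations times the maximal weight. Summing over configurations witnessing $A$ and taking the ratio yields constants $c_1,c_2>0$ depending only on $A,\Lambda,p$ with $c_1 \leq \gamma'_{p,\Lambda}(A|\omega'_{\Lambda^c}) \leq c_2$ uniformly in $\omega'_{\Lambda^c} \in \Omega'$, so $C := c_1/c_2$ realizes \eqref{Ineq: finite energy cond}.

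\textbf{Main obstacle.} The hard part will not be the bound itself but the deduction of uniqueness from it. The classical one-dimensional uniqueness result under finite energy in \cite{Ge11}, Chapter 8, is phrased for specifications on the full product space $\Omega$, whereas $\gamma'_p$ is supported on the strict subspace $\Omega' = T(\Omega)$ and fails non-nullness on $\Omega$, because isolated configurations receive zero probability. The plan, carried out in \hyperref[Sec: Appendix B]{Appendix B}, is to rerun Georgii's argument on the image $\sigma$-algebra: the original proof only ever compares specification kernels at configurations occurring with positive probability, so replacing $\Omega$ by $\Omega'$ throughout and substituting \eqref{Ineq: finite energy cond} for the standard non-nullness hypothesis carries the argument through without further modification, giving $|\mathscr{G}(\gamma'_p)|=1$ for every $p\in(0,1)$.
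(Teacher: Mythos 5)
Your overall architecture matches the paper's: membership of $\mu_p'$ in $\mathscr{G}(\gamma_p')$ from the construction, the finite-energy criterion of \cite{Ge11} transported to the trace $\sigma$-algebra on $\Omega'$ (the paper's \hyperref[Sec: Appendix B]{Appendix B}), and a buffered window $\Lambda\supset\Lambda_0$. However, your verification of the finite-energy bound \eqref{Ineq: finite energy cond} — which is the actual substance of the proof, not a routine step — has concrete gaps. First, it is not true that the exponents appearing in the factors $Z(U)$, $U\in\mathscr{U}_\Lambda(\omega')$, are bounded by $|\overline{\Lambda}|$: the outer unfixed areas $U_-,U_+$ reach out to the nearest occupied pair of the boundary condition, so $|U_\pm|$ is unbounded over $\omega'\in\Omega'$ and $Z(U_\pm)\asymp\lambda_{\text{PF}}^{|U_\pm|}\to 0$. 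These vanishing factors also sit inside $Z_\Lambda(\omega'_{\Lambda^c})$, so the ratio stays bounded only after an explicit cancellation; bounding numerator and denominator separately by constants depending on $|\Lambda|$ and $p$ alone cannot work. Second, your lower bound on $Z_\Lambda(\omega'_{\Lambda^c})$ by the weight of the all-zero configuration on $\Lambda$ fails: if the boundary condition has an occupied site in $\partial_+\Lambda$ whose only possible occupied neighbour lies inside $\Lambda$, then putting zeros throughout $\Lambda$ isolates that site, the concatenation leaves $\Omega'$, and its weight is zero. Third, and most importantly, a configuration $\tilde\omega'_{[-m,m]}$ realizing $A$ need not be compatible with an arbitrary boundary condition at all (e.g.\ an occupied site at the edge of $[-m,m]$ needs an occupied partner whose existence depends on $\omega'_{\Lambda^c}$), so no uniform pointwise lower bound on single-configuration kernels exists; one must use the sum over the buffer region.

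This is exactly what the paper's proof supplies and what is missing from your sketch: with $\Lambda=[-m-4,m+4]$ it passes to the double ratio \eqref{Ineq: finite energy cond rewritten} (so the partition functions drop out), and in the numerator it discards all extensions to $\Lambda\setminus[-m,m]$ except those with $\hat\omega'_{\{-m-3,-m-2,m+2,m+3\}}=1$. These two forced occupied pairs screen the event $A$ from whatever happens outside $\Lambda$: every $\tilde\omega'_{[-m,m]}\in A$ then extends to an admissible configuration compatible with \emph{every} $\zeta'\in\Omega'$, the indicator factorizes across the screens, and the inner sums over the intermediate sites telescope to one via \eqref{eq: probability measure fixed Bc.}. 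This yields the boundary-condition-independent constant $C=2^{-16}p^8$ (the $p^8$ from forcing the four screen sites in both ratios, the $2^{-16}$ from the crude count of extensions in the upper bound). Without some version of this screening/decoupling step your argument does not close; I would recommend reworking the finite-energy paragraph around it.
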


The proofs of these results are given in Section \ref{Sec: Proofs two-sided view}.

\subsection{One-sided view: g-measure}\label{Subsec: One-sided view: g-measure}

We continue with a one-sided view of the model using left-interval specifications (LIS's). They are prescribing conditional probabilities of the next symbol given the information in the infinite past of this symbol. Since our model is spatially translation-invariant, the associated $g$-function, namely, the single-site kernel at the origin, suffices to describes an entire consistent LIS. In the following theorem, we present the explicit form of a $g$-function for the TBF.

\begin{theorem}\label{thm: g-function for mu'_p}
    For all $p\in (0,1)$, the TBF $\mu_p'$ is consistent with a translation-invariant LIS $\rho_p$. The corresponding g-function depends only on the spin at the origin and on the distance $n$ to the nearest pair of ones in the past of a configuration $\omega'\in \Omega'$. More precisely, define 
    \begin{equation}\label{eq: distance to the next pair of ones}
        n=n(\omega'):=\min\big\{i\in \N:~(\omega'_{\{-i-1\}},\omega'_{\{-i\}})=(1,1)\big\}.
    \end{equation}
    Set $n=0$ if $(\omega'_{\{-2\}},\omega'_{\{-1\}})=(0,1)$, and $n=\infty$ if no such pair appears in the past of $\omega'$. The $g$-function is then defined by $g_p(n):=\rho_{p,\{0\}}(0_{\{0\}}|\omega')$, which gives the probability of finding another empty site at the origin, given that the nearest pair of occupied sites in the past occurs at distance $n$. The function $g_p$ can be expressed in terms of the eigenvalues $\lambda_{\text{PF}}$ and $\lambda_{\text{r}}$, computed in \eqref{eq: Eigenvalues of Q}, corresponding to the symmetric transfer matrix $Q$ for the model constrained on isolation, where $Q$ is explicitly given in \eqref{eq: Transfer matrix Q}.
    Explicitly, we have $g_p(0)=0$, $g_p(1)=1-p$, while for $n\in \N_{\geq 2}\cup \{\infty\}$,
    \begin{equation}\label{eq: g-function for the image measure}
                g_p(n)=\frac{(-1)^{n+1}|a|^n\frac{\lambda_{PF}}{1-\lambda_{r}}+ \frac{\lambda_{PF}}{1-\lambda_{PF}}}{(-1)^n|a|^{n-1}\frac{1}{1-\lambda_{r}}+\frac{1}{1-\lambda_{PF}}}
    \end{equation}
    where $a=a(p)$ denotes the eigenvalue ratio introduced in \eqref{eq: Fraction eigenvalues} and displayed in Figure \ref{fig: Speed of convergence and plot g-function} (a).
    Figure \ref{fig: Speed of convergence and plot g-function} (b) displays $g_p$ as a function of $p$ for various values of $n$.
\end{theorem}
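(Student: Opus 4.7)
My plan is to extract the g-function from the two-sided description of Theorem \ref{thm: Existence image specification} and to identify it in closed form via the spectral decomposition of the transfer matrix $Q$. Since $\mu'_p$ is the translation-invariant Gibbs measure for the quasilocal specification $\gamma'_p$ (Proposition \ref{thm: Uniqueness of the Gibbs measure}), the conditional probability $\mu'_p(\omega'_0=0\,|\,\mathscr{F}_{<0})$ is translation-covariant and defines, together with its iterates on longer intervals, the LIS $\rho_p$; by Theorem 3.1 of \cite{FeMa05}, the single-site kernel at the origin already determines the entire LIS.

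To compute this kernel I would take the limit
\begin{equation*}
\rho_{p,\{0\}}(\omega'_0=0\,|\,\omega'_{<0})\;=\;\lim_{M\to\infty}\sum_{\eta'\in\Omega'_{[1,M]}}\gamma'_{p,[0,M]}\bigl((0,\eta')\,\big|\,\omega'_{\le -1}\,\zeta_{\ge M+1}\bigr),
\end{equation*}
for an arbitrary reference right boundary $\zeta$, and use the quasilocality bound of Theorem \ref{thm: Quasilocality Gamma'} (applied with the growing observation window $[0,M]$) to show that the dependence on $\zeta$ decays as $|a|^{M}$ and hence vanishes. I would then reduce the past-dependence to the single integer $n(\omega')$ of \eqref{eq: distance to the next pair of ones}: by the fixed/unfixed-area decomposition of Definition \ref{def: Fixed area}, the unfixed component containing the origin is determined solely by the position of the nearest occupied cluster in the past. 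The degenerate values are immediate: for $n=0$ non-isolation of $\omega'_{-1}=1$ forces $\omega'_0=1$, giving $g_p(0)=0$; for $n=1$ the cluster at $\{-2,-1\}$ is already self-sustaining and the first-layer spin at $0$ keeps its independent Bernoulli weight, giving $g_p(1)=1-p$.

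For $n\ge 2$ I would insert \eqref{eq: Kernels image specification} into the limit above. Marginalization over $\omega'_{[1,M]}$ converts the kernel into a ratio of two partition functions of the isolation-constrained first-layer model on $[-n+1,M]$, with prescribed left condition (first-layer spin at $-n+1$ is $0$ by non-isolation of the cluster at $-n$), the first-layer spin at $0$ fixed differently in numerator and denominator, and second-layer right boundary $\zeta$; the factor $Q^{n+1}$ transports the boundary data across the unfixed gap of length $n-1$, while the Perron--Frobenius eigenvalue $\lambda_{\text{PF}}$ absorbs the semi-infinite tail, so all $M$- and $\zeta$-dependent contributions cancel in the ratio. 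Expanding in the eigenbasis of $Q$ and factoring $\lambda_{\text{PF}}^{n-1}$ from numerator and denominator yields the dimensionless ratio $a=\lambda_{r}/\lambda_{\text{PF}}$ with its characteristic $(-1)^n$ sign, and the remaining semi-infinite geometric sums produce the prefactors $\lambda_{\text{PF}}/(1-\lambda_{\text{PF}})$ and $\lambda_{\text{PF}}/(1-\lambda_{r})$ of \eqref{eq: g-function for the image measure}; the case $n=\infty$ is recovered by letting $|a|^n\to 0$.

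The main obstacle will be the careful bookkeeping that matches the eigenvector components and boundary weights to the precise coefficients of \eqref{eq: g-function for the image measure}, together with identifying which of the four cases of \eqref{def: Partition functions unfixed area} is active at each stage of the limit and verifying that the quasilocality rate of Theorem \ref{thm: Quasilocality Gamma'} dominates the corrections uniformly in the right boundary condition as $M\to\infty$.
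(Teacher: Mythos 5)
Your overall strategy -- obtain the $g$-function as the $k\to\infty$ limit of the two-sided kernels $\gamma'_{p,[0,k]}$ marginalized at the origin, then evaluate the limit with the spectral decomposition of $Q$ -- is the same as the paper's, and your treatment of the degenerate cases $n=0$ and $n=1$ is correct (indeed cleaner than the paper's Case 1). However, there is a genuine gap at the central step. You justify the existence of the limit and its independence of the right boundary $\zeta$ by invoking Theorem \ref{thm: Quasilocality Gamma'} ``with the growing observation window $[0,M]$'' to get decay $|a|^M$. That theorem bounds $s([l,r],[L,R])$ by $C_+(l,r)\,|a|^{n}$ with $n=\min\{l-L,R-r\}$, i.e.\ the exponent is the \emph{distance from the observation window to the set where the boundary is varied}. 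In your setup the boundary is varied on $[M+1,\infty)$, which is at distance $1$ from the window $[0,M]$, so the theorem yields only $n=O(1)$ and no decay in $M$; moreover the bound controls the probability of a single configuration on $[0,M]$, so passing to the marginal at $\{0\}$ costs a sum over the $2^{M}$ configurations $\eta'_{[1,M]}$, against a prefactor $C_+(0,M)=A_+ (b_+)^{M+1}$ with $b_+=\max\{p,\lambda_{\text{PF}}\}$ close to $1$ for large $p$. The decoupling of the marginal at the origin from the far right boundary is a genuinely one-sided statement that Theorem \ref{thm: Quasilocality Gamma'} does not provide; the paper proves it directly, by writing the ratio $\gamma'_{p,[0,k]}(\sigma_0=1|\omega')/\gamma'_{p,[0,k]}(\sigma_0=0|\omega')$ in terms of $Q$ and showing the limit is independent of the position $m(k)$ of the first stopping word after $k$, uniformly over all admissible sequences $m(k)$.

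A second, smaller issue: your claim that marginalizing over $\omega'_{[1,M]}$ ``converts the kernel into a ratio of two partition functions of the isolation-constrained first-layer model on $[-n+1,M]$'' oversimplifies the combinatorics. Different second-layer configurations $\eta'_{[1,M]}$ have different fixed/unfixed-area decompositions, so the sum does not collapse to a single constrained partition function with one factor $Q^{n+1}$; one must decompose according to the positions of the first pair of adjacent occupied sites to the right of the origin (and, in the denominator, also to the right of the past stopping word), which produces the double geometric sums that occupy most of the paper's proof (its Lemmas on the sequences $a_k,b_k,\tilde a_k,\tilde b_k,c_k$). Your plan correctly anticipates that the eigenvalue ratio $a$ and the geometric factors $\lambda_{\text{PF}}/(1-\lambda_{\text{PF}})$, $\lambda_{\text{PF}}/(1-\lambda_{r})$ emerge at the end, but the route from the kernel to those sums, and the uniform control of the error terms in $k$, is the substance of the proof and is not yet supplied.
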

\begin{figure}[ht]
    \centering
    \subfloat[The plot of the eigenvalue ratio $a(p)$ which is monotonically decreasing and negative for all $p\in (0,1)$. Moreover, note that $\lim_{p\downarrow 0} a(p)=0$ and $\lim_{p\uparrow 1}a(p)=-1$.]
    {\includegraphics[width=0.425\textwidth]{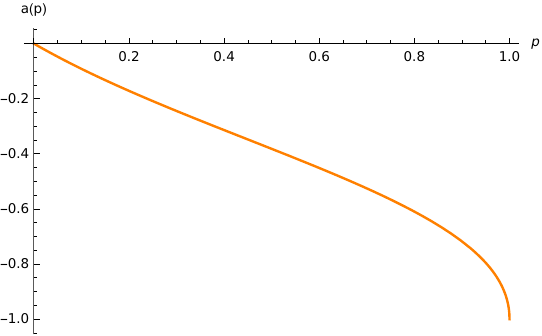}}%
    \hfill
    \subfloat[The plot of the function $g_p(n)$ for different values of the distance $n$ to the next pair of occupied sites in the past of the origin.]
    {\includegraphics[width=0.525\textwidth]{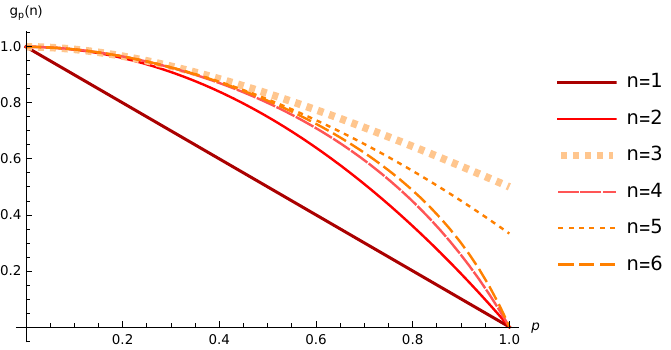}}
    \caption{Depicted are plots the eigenvalue ratio $a=a(p)$ and the $g$-function $g_p$ as a function of the occupation density $p$ describing the behavior of the TBF.}
    \label{fig: Speed of convergence and plot g-function}
\end{figure}

Note the interesting dependence on whether the distance to an occupied boundary condition is even or odd as $p\uparrow 1$.

\begin{lemma}\label{lem: Limit g-function n odd}
Let $g_p$ be the function defined in \eqref{eq: g-function for the image measure}. For $n\in \N_{\geq 3}$ odd, we have $\lim_{p\uparrow 1} g_p(n)=\frac{2}{n+1}$, whereas for $n\in \N$ even, we have $\lim_{p\uparrow 1} g_p(n)=0$.
\end{lemma}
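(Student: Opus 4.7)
The plan is to substitute the asymptotics of $\lambda_{PF}$, $\lambda_{r}$ and $|a|$ as $p\uparrow 1$ into the closed form \eqref{eq: g-function for the image measure} and read off the leading-order behavior of the ratio. Setting $\delta:=\sqrt{1-p}$, from the explicit eigenvalues in \eqref{eq: Eigenvalues of Q} I expect
\begin{equation*}
    \lambda_{PF}=\delta+O(\delta^2),\qquad \lambda_{r}=-\delta+O(\delta^2),\qquad |a|=\frac{-\lambda_{r}}{\lambda_{PF}}=1-\delta+O(\delta^2),
\end{equation*}
so that for every fixed $k$ one has $|a|^{k}=1-k\delta+O(\delta^2)$, $(1-\lambda_{r})^{-1}=1-\delta+O(\delta^2)$, and $(1-\lambda_{PF})^{-1}=1+\delta+O(\delta^2)$.

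For odd $n\geq 3$, where $(-1)^{n+1}=+1$ and $(-1)^{n}=-1$, I would pull the common factor $\lambda_{PF}\sim\delta$ out of the numerator of \eqref{eq: g-function for the image measure}, giving
\begin{equation*}
    \mathrm{num}=\lambda_{PF}\Bigl[\tfrac{|a|^n}{1-\lambda_{r}}+\tfrac{1}{1-\lambda_{PF}}\Bigr]=\lambda_{PF}\bigl[(1-(n+1)\delta)+(1+\delta)+O(\delta^2)\bigr]=2\delta+O(\delta^2),
\end{equation*}
while in the denominator the constant terms cancel and the first-order contributions accumulate:
\begin{equation*}
    \mathrm{den}=-\tfrac{|a|^{n-1}}{1-\lambda_{r}}+\tfrac{1}{1-\lambda_{PF}}=-(1-n\delta)+(1+\delta)+O(\delta^2)=(n+1)\delta+O(\delta^2).
\end{equation*}
Dividing yields $g_p(n)\to 2/(n+1)$ as $p\uparrow 1$.

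For even $n\geq 2$ the signs are reversed and the pattern swaps. The two constant contributions in the denominator now \emph{add} to $2+O(\delta)$, while in the numerator the $O(1)$ parts inside the bracket $\lambda_{PF}\bigl[-\tfrac{|a|^n}{1-\lambda_{r}}+\tfrac{1}{1-\lambda_{PF}}\bigr]$ cancel, leaving a bracket of order $(n+2)\delta+O(\delta^2)$ and hence a total numerator of order $\delta^2$. Consequently $g_p(n)\to 0$.

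The entire argument is careful asymptotic bookkeeping; the only point to watch is that the suppressed $O(\delta^2)$ remainders do not spoil the leading-order identifications, which is automatic since $n$ is held fixed while $\delta\downarrow 0$. I would close with a short remark that the parity dichotomy is the analytic shadow of the fact that as $p\uparrow 1$ the hidden isolation-constrained chain is pushed toward the alternating configuration $\ldots 0101\ldots$, so odd- and even-length gaps between occupied pairs transmit boundary information in fundamentally different ways, matching the \say{echo of non-quasilocality} signaled in Remark~\ref{rk: Echoe of quasilocality two-sided}.
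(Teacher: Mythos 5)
Your proof is correct, and it takes a genuinely different route from the paper. The paper first rewrites $g_p(n)$ as $\bigl(\lambda_{\text{PF}}(1-\lambda_{\text{r}})-a^n\lambda_{\text{PF}}(1-\lambda_{\text{PF}})\bigr)/\bigl((1-\lambda_{\text{r}})-a^{n-1}(1-\lambda_{\text{PF}})\bigr)$, observes that for odd $n$ this is a $0/0$ form as $p\uparrow 1$, and then applies l'H\^opital's rule, which requires computing the $p$-derivatives of $\lambda_{\text{PF}}$, $\lambda_{\text{r}}$, $a$, and the products $\lambda_{\text{PF}}(1-\lambda_{\text{r}})$ and $\lambda_{\text{PF}}(1-\lambda_{\text{PF}})$, each of which carries a $\bigl((1-p)(3p+1)\bigr)^{-1/2}$ singularity that must be cancelled by hand. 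Your expansion in $\delta=\sqrt{1-p}$ sidesteps all of that: the identifications $\lambda_{\text{PF}}=\delta+O(\delta^2)$, $\lambda_{\text{r}}=-\delta+O(\delta^2)$, $|a|=1-\delta+O(\delta^2)$ follow directly from \eqref{eq: Eigenvalues of Q} via $\sqrt{(1-p)(3p+1)}=2\delta+O(\delta^3)$, and since $n$ is held fixed the $O(\delta^2)$ remainders are harmless, exactly as you note. Your bookkeeping checks out in both parity cases (numerator $2\delta+O(\delta^2)$ against denominator $(n+1)\delta+O(\delta^2)$ for odd $n$; numerator $O(\delta^2)$ against denominator $2+O(\delta)$ for even $n$). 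What your approach buys beyond the paper's is the leading-order rate of approach to the limit essentially for free, and a uniform treatment of both parities; what it costs is nothing, since the error control is trivial for fixed $n$. Your closing heuristic about tight packings of the alternating configuration is exactly the combinatorial explanation the paper gives in Remark \ref{rk: Parity dependence on b.c. one-sided view}.
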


 The dependence of fully occupied boundary conditions on whether the distance to the root is even or odd was also observed in the TBF on trees with bounded degrees in the regime of sufficiently large occupation density $p$. In that case, jumps of finite size remained even in the large $n$ limit. This fact was used in \cite{HeKuSc23} and \cite{JaKu23} to exhibit the non-quasilocality of the conditional probabilities of the TBF. Therefore, Lemma \ref{lem: Limit g-function n odd} should be seen as a weaker form of the same phenomenon in the present one-dimensional case. Finally, we investigate the number of consistent measures for the LIS $\rho_p$ given by the $g$-function in \eqref{eq: g-function for the image measure}.

\begin{proposition}\label{thm: Unique g-measure}
    Let $p\in (0,1)$. The LIS $\rho_p$ given in Theorem \ref{thm: g-function for mu'_p} possesses a unique consistent measure, namely the TBF $\mu'_p$.
\end{proposition}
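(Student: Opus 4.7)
The plan is to apply the uniqueness criterion of Fernández–Maillard \cite{FeMa05}, which states that if the $g$-function's variations are summable, then there exists exactly one consistent measure (without any need to assume shift-invariance of the candidate). The existence part is already settled by Theorem \ref{thm: g-function for mu'_p}, which produces $\mu'_p$ as a consistent measure for $\rho_p$, so only uniqueness remains.

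First I would set up the variations $\mathrm{var}_k(g_p) := \sup\{|g_p(\omega') - g_p(\eta')| : \omega',\eta' \in \Omega',\ \omega'_{[-k,-1]} = \eta'_{[-k,-1]}\}$ and translate them through the special structure of $g_p$. By Theorem \ref{thm: g-function for mu'_p}, the $g$-function depends on the past only through the distance $n(\omega')$ to the nearest pair of consecutive ones in $\{-1,-2,\ldots\}$. Thus if two configurations $\omega',\eta'$ agree on the window $[-k,-1]$ and this window already contains a pair of consecutive ones (or the pattern $(\omega'_{-2},\omega'_{-1})=(0,1)$), then $n(\omega')=n(\eta')$ and so $g_p(\omega') = g_p(\eta')$. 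Otherwise both values $n(\omega'),n(\eta')$ are at least $k-1$, so
\begin{equation*}
\mathrm{var}_k(g_p) \leq \sup_{m,m' \geq k-1} |g_p(m) - g_p(m')|.
\end{equation*}

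Second, I would quantify the right-hand side using formula \eqref{eq: g-function for the image measure}. The $n$-dependence enters only through factors of $(-1)^n|a|^n$ with $|a|\in(0,1)$ (see Figure \ref{fig: Speed of convergence and plot g-function}(a)). A direct expansion of the fraction yields $g_p(n) = g_p(\infty) + O(|a|^n)$, where $g_p(\infty)$ is obtained by dropping the $|a|^n$ terms in both numerator and denominator. Consequently $\mathrm{var}_k(g_p) \leq C(p)\,|a|^{k}$ for some constant $C(p)<\infty$ depending on $p$, and
\begin{equation*}
\sum_{k \geq 1} \mathrm{var}_k(g_p) \leq C(p) \sum_{k\geq 1}|a|^k < \infty,
\end{equation*}
which is precisely the summability condition \eqref{eq: Condition uniqueness g-measure} alluded to in the introduction. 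The finite-energy-like bound $g_p(n)\in(0,1)$ for all $n\geq 1$ (and the fact that $g_p(0)=0$ is tied to the hard constraint of non-isolation, i.e., to excluded configurations in $\Omega'$) ensures the non-nullness required to invoke \cite{FeMa05}.

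Finally, applying the uniqueness theorem of \cite{FeMa05}, the LIS $\rho_p$ admits at most one consistent probability measure on $(\Omega',\mathscr{F}')$; combined with the consistency of $\mu'_p$ established in Theorem \ref{thm: g-function for mu'_p}, this forces the unique consistent measure to be $\mu'_p$. The main technical point is the asymptotic analysis of $g_p(n)$ giving the explicit exponential rate $|a|^k$; once this is in place, the summability and the invocation of \cite{FeMa05} are immediate. No translation-invariance assumption on the candidate measure is needed, which is stronger than the uniqueness that would follow from the GHoC Markov chain view discussed in Subsection \ref{Subsec: Markov chain view: GHoC process}.
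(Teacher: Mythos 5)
Your proposal is correct and follows essentially the same route as the paper: both establish uniqueness by verifying the summable-variation (plus non-nullness) criterion of \cite{FeMa05} for the $g$-function $g_p$, and both extract the geometric decay rate $|a|^k$ of the variations from the explicit formula \eqref{eq: g-function for the image measure}. The only cosmetic difference is that you bound $\mathrm{var}_k(g_p)$ via $g_p(n)=g_p(\infty)+O(|a|^n)$, while the paper first identifies $\mathrm{var}_n(g_p)=|g_p(n)-g_p(n+1)|$ through the monotonicity of the even and odd subsequences and then computes the ratio of successive variations.
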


The proofs of Theorem \ref{thm: g-function for mu'_p} and Proposition \ref{thm: Unique g-measure} are given in Section \ref{Sec: Proofs one-sided view}, while the proof of Lemma \ref{lem: Limit g-function n odd} is provided in \hyperref[Sec: Appendix B]{Appendix B}, which also contains a discussion of the limit $p\rightarrow 1$ of the $g$-function.

\subsection{Markov chain view: Generalized House of Cards process (GHoC)} \label{Subsec: Markov chain view: GHoC process}

Finally, we show that the model can be viewed as a Markov process on $\Z$ with state space $\N_0\cup \{\infty\}$, where each state represents the distance to the next double one in the past of the given site. The transition probabilities are given by the $g$-function introduced in \eqref{eq: g-function for the image measure}. This process admits an intuitive interpretation as a \textit{house of cards type Markov chain}. In contrast to the more common house of cards process of \cite{ChRe09}, the transition probabilities are modified for jumps from the state $0$ and $1$.

 Given a configuration $\omega'\in \Omega'$, define for each site $i\in \Z$ the distance to the next pair of ones in the past of this site as
\begin{equation}\label{def: D2D1 definition}
    n_i=n_i(\omega'):=\min\{k\in \N: (\omega'_{i-k-1},\omega'_{i-k})=(1,1)\},
\end{equation}
$n_i:=0$ if $(\omega'_{i-2},\omega'_{i-1})=(0,1)$ and $n_i:=\infty$ if there is no occupied site before $i$. Furthermore, define a \textit{GHoC path} as a sequence $(a_n)_{n\in \Z}$ taking values in $\N_0\cup \{\infty\}$ and satisfying the constraints
\begin{equation*}
    \begin{cases}
        a_{n+1}\in \{0,a_{n}+1\}~&\text{if}~a_n\notin \{0,1\},\\
        a_{n+1}\in \{1,2\}~&\text{if}~a_n=1,\\        a_{n+1}=1~&\text{if}~a_n=0.
    \end{cases}
\end{equation*}
There is a one-to-one correspondence between a configuration $\omega'\in \Omega'$ and a GHoC path $(a_n)_{n\in \Z}$ taking values in $\N_0\cup \{\infty\}$, see for example Figure \ref{fig: D2D1 sequence}. This correspondence can be formalized by introducing the local map 
\begin{equation}\label{eq: Definition map Tau}
  \tau:\N_0\cup \{\infty\}\rightarrow\{0,1\},~~  \tau(n):=\mathds{1}_{\{0,1\}}(n),
\end{equation}
see Figure \ref{fig: Markov chain on N}. Note that $\tau(n_i)$ gives the occupation at the site $i-1$ and consequently $\omega'_i=\tau(n_{i+1})$ for all $i\in \Z$.

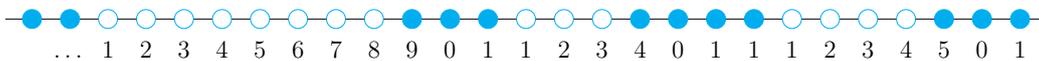
\begin{figure}[ht!]
\centering
\scalebox{0.5}{
\begin{tikzpicture}[every label/.append style={scale=1.3}]
    \node[shape=circle,draw=cyan, fill=cyan, minimum size=0.5cm] (-B) at (-1,0) {};
    \node[shape=circle,draw=cyan, fill=cyan, minimum size=0.5cm] (-C) at (-2,0) {};
    \node[shape=circle,draw=cyan, fill=cyan, minimum size=0.5cm] (-D) at (-3,0) {};
    \node[shape=circle,draw=cyan, minimum size=0.5cm] (-E) at (-4,0) {};
    \node[shape=circle,draw=cyan,  minimum size=0.5cm] (-F) at (-5,0) {};
    \node[shape=circle,draw=cyan, minimum size=0.5cm] (-G) at (-6,0) {}; \node[shape=circle,draw=cyan, minimum size=0.5cm] (-H) at (-7,0) {};
    \node[shape=circle,draw=cyan, minimum size=0.5cm] (-I) at (-8,0) {};
    \node[shape=circle,draw=cyan, minimum size=0.5cm] (-J) at (-9,0) {};
    \node[shape=circle,draw=cyan,  minimum size=0.5cm] (-K) at (-10,0) {};
    \node[shape=circle,draw=cyan, minimum size=0.5cm] (-L) at (-11,0) {};
    \node[shape=circle,draw=cyan, fill=cyan, minimum size=0.5cm] (-M) at (-12,0) {};
    \node[shape=circle,draw=cyan, fill=cyan, minimum size=0.5cm] (-O) at (-13,0) {};
    \node[shape=circle,draw=cyan, minimum size=0.5cm] (A) at (0,0) {};
    \node[shape=circle,draw=cyan, minimum size=0.5cm] (B) at (1,0) {};
    \node[shape=circle,draw=cyan,minimum size=0.5cm] (C) at (2,0) {};
    \node[shape=circle,draw=cyan, fill=cyan, minimum size=0.5cm] (D) at (3,0) {};
    \node[shape=circle,draw=cyan, fill=cyan, minimum size=0.5cm] (E) at (4,0) {};
    \node[shape=circle,draw=cyan, fill=cyan,  minimum size=0.5cm] (F) at (5,0) {};
    \node[shape=circle,draw=cyan, fill=cyan, minimum size=0.5cm] (G) at (6,0) {};
    \node[shape=circle,draw=cyan, minimum size=0.5cm] (H) at (7,0) {};
    \node[shape=circle,draw=cyan, minimum size=0.5cm] (I) at (8,0) {};
    \node[shape=circle,draw=cyan, minimum size=0.5cm] (J) at (9,0) {};
    \node[shape=circle,draw=cyan,  minimum size=0.5cm] (K) at (10,0) {};
    \node[shape=circle,draw=cyan, fill=cyan,  minimum size=0.5cm] (L) at (11,0) {};
    \node[shape=circle,draw=cyan, fill=cyan, minimum size=0.5cm] (M) at (12,0) {};
    \node[shape=circle,draw=cyan, fill=cyan, minimum size=0.5cm] (O) at (13,0) {};

     \node[minimum size=0.5, label=:{\Large 1}] () at (0,-1.3) {};
     \node[minimum size=0.5, label=:{\Large 2}] () at (1,-1.3) {};
     \node[minimum size=0.5, label=:{\Large 3}] () at (2,-1.3) {};
     \node[minimum size=0.5, label=:{\Large 4}] () at (3,-1.3) {};
     \node[minimum size=0.5, label=:{\Large 0}] () at (4,-1.3) {};

\node[minimum size=0.5, label=:{\Large 1}] () at (5,-1.3) {};
\node[minimum size=0.5, label=:{\Large 1}] () at (6,-1.3) {};
\node[minimum size=0.5, label=:{\Large 1}] () at (7,-1.3) {};
\node[minimum size=0.5, label=:{\Large 2}] () at (8,-1.3) {};
\node[minimum size=0.5, label=:{\Large 3}] () at (9,-1.3) {};
\node[minimum size=0.5, label=:{\Large 4}] () at (10,-1.3) {};
\node[minimum size=0.5, label=:{\Large 5}] () at (11,-1.3) {};
\node[minimum size=0.5, label=:{\Large 0}] () at (12,-1.3) {};
\node[minimum size=0.5, label=:{\Large 1}] () at (13,-1.3) {};

\node[minimum size=0.5, label=:{\Large 1}] () at (-1,-1.3) {};
\node[minimum size=0.5, label=:{\Large 0}] () at (-2,-1.3) {};
     \node[minimum size=0.5, label=:{\Large 9}] () at (-3,-1.3) {};
     \node[minimum size=0.5, label=:{\Large 8}] () at (-4,-1.3) {};

\node[minimum size=0.5, label=:{\Large 7}] () at (-5,-1.3) {};
\node[minimum size=0.5, label=:{\Large 6}] () at (-6,-1.3) {};
\node[minimum size=0.5, label=:{\Large 5}] () at (-7,-1.3) {};
\node[minimum size=0.5, label=:{\Large 4}] () at (-8,-1.3) {};
\node[minimum size=0.5, label=:{\Large 3}] () at (-9,-1.3) {};
\node[minimum size=0.5, label=:{\Large 2}] () at (-10,-1.3) {};
\node[minimum size=0.5, label=:{\Large 1}] () at (-11,-1.3) {};
\node[minimum size=0.5, label=:{\Large \dots}] () at (-12,-1.3) {};

    \draw[-] (-O)--(-13.7,0){};
    \draw [-] (-O) -- (-M);
     \draw [-] (-L) -- (-M);
     \draw [-] (-L) -- (-K);
    \draw [-] (-J) -- (-K);
    \draw [-] (-I) -- (-J);
    \draw [-] (-H) -- (-I);
    \draw [-] (-G) -- (-H);
    \draw [-] (-F) -- (-G);
    \draw [-] (-F) -- (-E);
    \draw [-] (-E) -- (-D);
    \draw [-] (-D) -- (-C);
    \draw [-] (-C) -- (-B);
    \draw [-] (-B) -- (A);
    \draw [-] (A) -- (B);
    \draw[-](O)--(13.7,0){};
    \draw [-] (M) -- (O);
    \draw [-] (M) -- (L);
    \draw [-] (K) -- (L);
    \draw [-] (K) -- (J);
    \draw [-] (J) -- (I);
    \draw [-] (H) -- (I);
    \draw [-] (G) -- (H);
    \draw [-] (G) -- (F);
    \draw [-] (F) -- (E);
    \draw [-] (E) -- (D);
    \draw [-] (D) -- (C);
    \draw [-] (C) -- (B);

\end{tikzpicture}
}
 \small \caption{A non-isolated configuration $\omega'$ on the line, highlighted in blue, together with its corresponding GHoC path $(n_i)_{i\in \Z}$ in $\N_0\cup \{\infty\}$, where $n_i$ is indicated below each site $i$. For each site $i\in \Z$, the state shown below indicates the distance $n_i$ to the next pair of occupied sites in its past. Note that, if the configuration in the past ends with 
the pair "empty followed by occupied", the distance to the next 
double one is put to be $0$, as the example shows in the middle 
of the triplet of occupied sites. 
  }
\label{fig: D2D1 sequence}
\end{figure}

We define the \textit{generalized house of cards (GHoC) process} $(X_n)_{n\in \Z}$ on the state space $\N_0\cup \{\infty\}$ as a Markov chain with transition probabilities $\Pi_p=(\Pi_p(i,j))_{i,j\in \N_0\cup \{\infty\}}$, given by
\begin{equation}\label{eq: Entries transfer matrix Markov chain}
    \Pi_p(i,j):=\begin{cases}
        g_p(i)~&\text{if}~j=i+1,\\
        1-g_p(i)~&\text{if}~j=0~\text{and}~i\neq 1,\\
        p~&\text{if}~i=j=1,\\
        0~&\text{else}
    \end{cases}
\end{equation}
for all $i,j\in \N_0\cup \{\infty\}$. For an illustration of the transition graph, together with the map $\tau$ to the TBF, see Figure \ref{fig: Markov chain on N}.

\begin{figure}[ht!]
\centering
\scalebox{0.8}{
\begin{tikzpicture}[every label/.append style={scale=1}]
 \node[shape=rectangle,draw=black,  minimum size=0.8cm] (0) at (0,0) {0};
 \node[shape=rectangle,draw=black,  minimum size=0.8cm] (1) at (2,0) {1};
 \node[shape=rectangle,draw=black,  minimum size=0.8cm] (2) at (4,0) {2};
\node[shape=rectangle,draw=black,  minimum size=0.8cm] (3) at (6,0) {3};
\node[shape=rectangle,draw=black,  minimum size=0.8cm] (4) at (8,0) {4};
\node[shape=rectangle,draw=black,  minimum size=0.8cm] (5) at (10,0) {5};
\node[shape=rectangle,draw=black,  minimum size=0.8cm] (6) at (12,0) {6};
\node[shape=rectangle,  minimum size=0.8cm] (7) at (14,0) {};
 \node[minimum size=0.5, label=:{\text{...}}] () at (13.75,-0.25) {};

\draw[->,bend angle=45, bend left] (0) to (1);
\draw[->,bend angle=45, bend left] (1) to (2);
\draw[->,bend angle=45, bend left] (2) to (3);
\draw[->,bend angle=45, bend left] (3) to (4);
\draw[->,bend angle=45, bend left] (4) to (5);
\draw[->,bend angle=45, bend left] (5) to (6);
\draw[->,bend angle=45, bend left] (5) to (6);
\draw[->,bend angle=45, bend left] (6) to (7);

\draw[->,bend angle=90, bend left] (2) to (0);
\draw[->,bend angle=90, bend left] (3) to (0);
\draw[->,bend angle=90, bend left] (4) to (0);
\draw[->,bend angle=90, bend left] (5) to (0);
\draw[->,bend angle=90, bend left] (6) to (0);

\path[->] (1) edge  [loop below] node {$p$} ();

\node[minimum size=0.5, label=:{$1$}] () at (1,0.5) {};
\node[minimum size=0.5, label=:{$1-p$}] () at (3,0.5) {};
\node[minimum size=0.5, label=:{$g_p(2)$}] () at (5,0.5) {};
\node[minimum size=0.5, label=:{$g_p(3)$}] () at (7,0.5) {};
\node[minimum size=0.5, label=:{$g_p(4)$}] () at (9,0.5) {};
\node[minimum size=0.5, label=:{$g_p(5)$}] () at (11,0.5) {};
\node[minimum size=0.5, label=:{$g_p(6)$}] () at (13,0.5) {};

\node[minimum size=0.5, label=:{$1-g_p(2)$}] () at (4,-2) {};
\node[minimum size=0.5, label=:{$1-g_p(3)$}] () at (6,-2.3) {};
\node[minimum size=0.5, label=:{$1-g_p(4)$}] () at (8,-2.6) {};
\node[minimum size=0.5, label=:{$1-g_p(5)$}] () at (10,-2.9) {};
\node[minimum size=0.5, label=:{$1-g_p(6)$}] () at (12,-3.2) {};

    \draw [decorate,very thick,decoration={brace,amplitude=5pt,mirror,raise=4ex},cyan]
  (2.3,1) -- (-0.4,1) node[midway,yshift=-3em]{};
  \draw [decorate,very thick,decoration={brace,amplitude=5pt,mirror,raise=4ex},cyan]
  (14,1) -- (3.6,1) node[midway,yshift=-3em]{};
\draw[->,very thick, cyan] (0.95,1.8) to (0.95,2.9)  node [text width=2.5cm,midway,above=0.5cm, align=center] {};

\draw[->,very thick, cyan] (8.8,1.8) to (8.8,2.9)  node [text width=2.5cm,midway,above=0.5cm, align=center] {};

\node[minimum size=0.5, label=:{{\color{cyan}$1$}}] () at (0.95,2.95) {};
\node[minimum size=0.5, label=:{{\color{cyan}$\tau$}}] () at (0.5,2) {};

\node[minimum size=0.5, label=:{{\color{cyan}$0$}}] () at (8.8,2.95) {};
\node[minimum size=0.5, label=:{{\color{cyan}$\tau$}}] () at (8.35,2) {};

\end{tikzpicture}
}
\small \caption{Depicted is the GHoC Markov process with state space $\N_0\cup \{\infty\}$ for the first 7 states. The transition probabilities are given by the g-function $g_p$ of the TBF. Furthermore, the map $\tau$ is depicted in blue which provides a one-to-one correspondence between a non-isolated configuration $\omega'$ and its GHoC path $(n_i)_{i\in \Z}$ taking values in $\N_0\cup\{\infty\}$. Note that an example of this correspondence is given in Figure \ref{fig: D2D1 sequence}.}
\label{fig: Markov chain on N}
\end{figure}
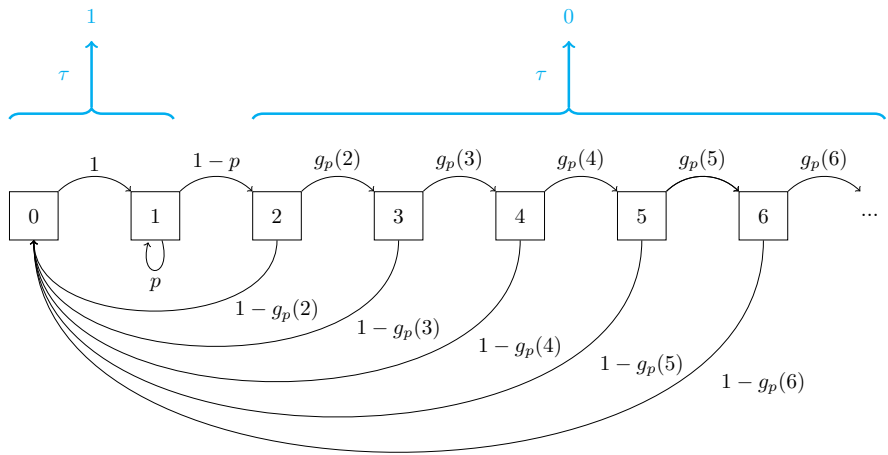

\begin{theorem}\label{thm: Relation TBF and GHoC}
    Let $p\in (0,1)$. The GHoC Markov process on $\Z$ with state space $\N_0\cup \{\infty\}$ and transition probabilities given in \eqref{eq: Entries transfer matrix Markov chain} possesses a unique stationary distribution on path space, denoted by $\mathds{P}_p^{GHoC}$. Furthermore, the TBF is a local deterministic map of a GHoC Markov process in the sense that 
    \begin{equation*}
        \tau\Big(\mathds{P}_p^{GHoC}\Big)=\mu_p'
    \end{equation*}
    where $\tau$ was defined in \eqref{eq: Definition map Tau}. More precisely, let $(n_i)_{i\in \Z}$ denote a path of the GHoC Markov chain, i.e. $(n_i)_{i\in \Z}\sim \mathds{P}_p^{GHoC}$. Then the random sequence $\big(\tau(n_i)\big)_{i\in \Z}$ is distributed according to $\mu_p'$.
\end{theorem}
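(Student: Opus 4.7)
The plan is to identify the stationary path-space measure $\mathds{P}_p^{GHoC}$ as the push-forward of $\mu_p'$ under the deterministic correspondence $\omega' \mapsto (n_i(\omega'))_{i\in\Z}$, whose inverse acts site-by-site via $\omega'_i = \tau(n_{i+1})$. First I would make this bijection rigorous between $\Omega'$ and the space of admissible GHoC paths: the forward map is already captured by \eqref{def: D2D1 definition}, while for the inverse a short case analysis on the transition graph verifies that every admissible sequence in $\N_0\cup\{\infty\}$ produces, through $\omega'_i = \tau(n_{i+1})$, a configuration in $\Omega'$. The essential point is that whenever $\tau(n_{i+1})=1$ (i.e.\ $n_{i+1}\in\{0,1\}$) the forced transitions out of the states $0$ and $1$ in Figure \ref{fig: Markov chain on N} guarantee that at least one of the neighbours $\omega'_{i-1},\omega'_{i+1}$ is also $1$, so no isolated occupied site arises.

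Next I would obtain existence of $\mathds{P}_p^{GHoC}$ and the identity $\tau(\mathds{P}_p^{GHoC}) = \mu_p'$ in one stroke by defining $\mathds{P}_p^{GHoC}$ as the push-forward of $\mu_p'$ under the bijection. Shift-invariance is inherited from that of $\mu_p'$ because the correspondence is shift-equivariant. The Markov property with transition matrix $\Pi_p$ is the decisive verification: by Theorem \ref{thm: g-function for mu'_p}, under $\mu_p'$ the conditional law of $\omega'_i$ given the past $(\omega'_k)_{k<i}$ depends on that past only through $n_i$, with $\mu_p'(\omega'_i = 0 \mid \text{past}) = g_p(n_i)$. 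Combining this with the purely \emph{deterministic} update rule $(n_i,\omega'_i) \mapsto n_{i+1}$, which follows by case analysis from the very definition of $n_i$, yields that the conditional law of $n_{i+1}$ given $(n_j)_{j\leq i}$ is exactly $\Pi_p(n_i,\cdot)$.

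For uniqueness on path space I would reduce to Proposition \ref{thm: Unique g-measure}. Given any shift-invariant measure $\nu$ on $(\N_0\cup\{\infty\})^{\Z}$ which is Markov with transitions $\Pi_p$, I would compute the conditional law under the pointwise push-forward $\tau_*\nu$ of $\tau(n_{i+1})$ given the past. The Markov property of $\nu$ together with the form of $\Pi_p$ shows that this conditional law is Bernoulli with parameter $g_p(n_i)$, so $\tau_*\nu$ is a translation-invariant $g$-measure for $g_p$. Proposition \ref{thm: Unique g-measure} then forces $\tau_*\nu = \mu_p'$, and the injectivity of the bijection gives $\nu = \mathds{P}_p^{GHoC}$.

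The main obstacle will be the careful handling of the state $\infty$, which looks absorbing-like and could in principle support its own stationary mass. Any invariant distribution $\pi$ of $\Pi_p$ satisfies $\pi(\infty) = \pi(\infty)\, g_p(\infty)$, and the limit of \eqref{eq: g-function for the image measure} yields $g_p(\infty) = \lambda_{\text{PF}} < 1$, which forces $\pi(\infty) = 0$. Dually, on the TBF side one has $n_i(\omega')<\infty$ for $\mu_p'$-a.e.\ $\omega'$, because under $\mu_p$ almost every realization contains infinitely many adjacent pairs of ones in any half-line and these pairs are preserved by the thinning map $T$. With $\infty$ carrying no mass, the pathwise bijection and the conditional-probability computations above are internally consistent, and the three steps combine to yield the theorem.
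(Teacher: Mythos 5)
Your argument is correct, and it actually supplies considerably more detail on the identification $\tau(\mathds{P}_p^{GHoC})=\mu_p'$ than the paper does, but your uniqueness step runs in the opposite logical direction from the paper's. The paper's proof of Theorem \ref{thm: Relation TBF and GHoC} is a short, self-contained Markov-chain argument: it applies Foster's drift criterion with the Lyapunov function $h(j)=A^j$, $A=(1+1/\lambda_{\text{PF}})/2$ (and $h(0)=h(\infty)=0$), using only $\lim_{j\rightarrow\infty}g_p(j)=\lambda_{\text{PF}}<1$, to obtain positive recurrence, and combines this with irreducibility on $\N_0$ to conclude existence and uniqueness of the stationary distribution; the pathwise identification with $\mu_p'$ is then read off from the bijection of Subsection \ref{Subsec: Markov chain view: GHoC process} essentially as you describe it. You instead deduce uniqueness of the invariant law from Proposition \ref{thm: Unique g-measure}, by pushing an arbitrary stationary Markov measure forward under $\tau$ and verifying that it is a $g$-measure for $g_p$. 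This is valid and non-circular as a proof of the theorem, since Proposition \ref{thm: Unique g-measure} is established independently via the Fern\'andez--Maillard summable-variations criterion, and your treatment of the state $\infty$ (the invariance equation $\pi(\infty)=\pi(\infty)\,g_p(\infty)$ with $g_p(\infty)=\lambda_{\text{PF}}<1$) is exactly what is needed to make the path-level bijection almost surely well defined and injective. The one thing your route costs is the paper's Remark \ref{rk: GHoC implies unique g-measure}: there the authors use the Foster-based uniqueness of the GHoC stationary law as an \emph{alternative}, $g$-measure-free route back to uniqueness of the translation-invariant $g$-measure, and that remark would become circular if Theorem \ref{thm: Relation TBF and GHoC} were itself derived from Proposition \ref{thm: Unique g-measure}. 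So both proofs are sound; the paper's buys logical independence from the one-sided uniqueness theory, while yours buys a computation-free uniqueness step at the price of that dependence.
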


From this relation, we obtain an alternative proof of the uniqueness of the $g$-measure, albeit assuming translation invariance.

\begin{remark}\label{rk: GHoC implies unique g-measure}
    Note that the bijection on path level 
between TBF and GHoC provides an alternative 
route to deduce uniqueness of the g-measure (that is the measure invariant 
under the $g$-function, which is assumed to be translation invariant), 
which only uses Foster's criterion for the GHoC. 
\end{remark}
The proofs of Theorem \ref{thm: Relation TBF and GHoC} and Remark \ref{rk: GHoC implies unique g-measure} are presented in Section \ref{Sec: Proofs one-sided view}.

\section{Proofs for the two-sided view} \label{Sec: Proofs two-sided view}

\subsection{Theorem \ref{thm: Existence image specification}: Computation of a specification $\gamma_p'$ for the TBF}\label{Subsec: The image specification}

The image process of the i.i.d. Bernoulli field on the line under the transformation $T$, see \eqref{eq: Map T} and Figure \ref{fig: Transformation T}, is strongly related to the Bernoulli field constrained on isolation. More precisely, it appears as a hidden process within the unfixed areas of a second-layer configuration $\omega'\in \Omega'$, see Remark \ref{rk: Fixed area} and Figure \ref{fig: Fixed area}. Therefore, we will introduce its corresponding symmetric transfer matrix
\begin{equation}\label{eq: Transfer matrix Q}
    Q=\begin{pmatrix}
        Q(0,0)&Q(0,1)\\
        Q(1,0)&Q(1,1)
    \end{pmatrix}
    :=\begin{pmatrix}
        1-p&\sqrt{p(1-p)}\\
        \sqrt{p(1-p)}&0
    \end{pmatrix}
\end{equation}
for all $p\in (0,1)$ which possesses the eigenvalues
\begin{equation}\label{eq: Eigenvalues of Q}
    \begin{split}
        \lambda_{\text{r}}&=\lambda_{\text{r}}(p):=\frac{1}{2}\bigg(1-p-\sqrt{(1-p)(3p+1)}\bigg)\in \Big(-\frac{1}{3},0\Big),\\
    \lambda_{\text{PF}}&=\lambda_{\text{PF}}(p):=\frac{1}{2}\bigg(1-p+\sqrt{(1-p)(3p+1)}\bigg)\in (0,1)
    \end{split}
\end{equation}
where $\lambda_{\text{PF}}$ is the Perron-Frobenius eigenvalue. An important quantity in further results and computations will be the \textit{eigenvalue ratio}
\begin{align}\label{eq: Fraction eigenvalues}
    a=a(p):=\frac{\lambda_r(p)}{\lambda_{\text{PF}}(p)}\in (-1,0)
\end{align}
for all $p\in(0,1)$. For the plot of $a(p)$ as a function of $p$, see Figure \ref{fig: Speed of convergence and plot g-function} (a).

In order to construct the specification $\gamma'_p$, we need to investigate in which way the spins of the given boundary condition may influence the spins in the finite observation set $\Lambda$.

\begin{definition}\label{def: restricted unfixed areas}
    Let $\Lambda\Subset \Z$ and $\omega'\in \Omega'$. We introduce the set 
    \begin{equation}\label{eq: restricted unfixed areas}
        \mathscr{U}_\Lambda=\mathscr{U}_\Lambda(\omega'):=\{U\in \mathscr{U}: ~\text{dist}(\overline{\Lambda},U)\leq 
        1\}
    \end{equation}
    for all $\Lambda\Subset \Z$ and write $U_-$ respectively $U_+$ for those connected components $U_-,U_+\in \mathscr{U}_\Lambda$ satisfying $\inf U_-<\min \Lambda$ respectively $\sup U_+>\max \Lambda$, in case such components exist. We denote by
    \begin{equation}\label{eq: Influence set}
        I=I_\Lambda(\omega'):=\overline{\Theta}_{\bar{\Lambda}}\cup\bigcup_{U\in \mathscr{U}_\Lambda}U
    \end{equation}
    the \textbf{influencing set} of $\Lambda$ for a given boundary condition $\omega'_{\Lambda^c}$.
\end{definition}

For an illustration of the influcencing set $I$ and the outer unfixed areas $U_-$ and $U_+$, see Figure \ref{fig: Existence of the limit}. Let us give some comments about the influcencing set defined above.

\begin{remark}\hfill\label{rk: Influence set}
\begin{enumerate}[label=\alph*)]
    \item Given a finite observation window $\Lambda$ and a boundary condition $\omega'_{\Lambda^c}$, the influencing set is defined to include all sites whose spins may affect those in $\Lambda$, regardless of the configuration inside $\Lambda$. Moreover, it is defined so that the spins in the first-layer of any configuration $\eta' \in \Omega'$ are fixed in $\partial_+ I_\Lambda(\omega')$ whenever $\eta'_{\Lambda^c}=\omega'_{\Lambda^c}$. This property is crucial for the proof of Theorem \ref{thm: Existence image specification}. By definition, $I_\Lambda(\omega')$ depends on $\omega'$, but it always contains $\overline{\Lambda}$ for all $\omega' \in \Omega'$.  
    \item The condition $\text{dist}(\overline{\Lambda}, U)\leq 1$ in the definition of $\mathscr{U}_\Lambda$ is introduced to ensure statement a). For instance, consider the spin at the site $7$ of Figure \ref{fig: Existence of the limit}. In the depicted configuration, the spin is fixed. However, if the spin at vertex $6$ were empty, the unfixed area $U_+$, consisting of the vertices $\{7,8,9\}$, would affect the spins in $\Lambda$.
    \item The sets $U_-$ and $U_+$ are the outer unfixed areas on the left respectively right side of $\Lambda$. These sets could be empty or possibly $U_-=U_+$.
\end{enumerate}
\end{remark}

\begin{figure}[ht!]
\centering
\scalebox{0.5}{
\begin{tikzpicture}[every label/.append style={scale=1.3},fill fraction/.style={path picture={
\fill[#1] 
(path picture bounding box.south) rectangle
(path picture bounding box.north west);
}},
fill fraction/.default=gray!50]
    \node[shape=circle,draw=orange, fill=orange, minimum size=0.5cm] (-B) at (-1,0) {};
    \node[shape=circle,draw=orange, fill=orange, minimum size=0.5cm] (-C) at (-2,0) {};
    \node[shape=circle,draw=orange, fill=orange, minimum size=0.5cm] (-D) at (-3,0) {};
    \node[shape=circle,draw=orange, minimum size=0.5cm] (-E) at (-4,0) {};
    \node[shape=circle,draw=orange,fill fraction=orange,  minimum size=0.5cm] (-F) at (-5,0) {};
    \node[shape=circle,draw=orange,fill fraction=orange, minimum size=0.5cm] (-G) at (-6,0) {}; \node[shape=circle,draw=orange,fill fraction=orange, minimum size=0.5cm] (-H) at (-7,0) {};
    \node[shape=circle,draw=orange,fill fraction=orange, minimum size=0.5cm] (-I) at (-8,0) {};
    \node[shape=circle,draw=orange,fill fraction=orange, minimum size=0.5cm] (-J) at (-9,0) {};
    \node[shape=circle,draw=orange,fill fraction=orange,  minimum size=0.5cm] (-K) at (-10,0) {};
    \node[shape=circle,draw=black, minimum size=0.5cm] (-L) at (-11,0) {};
    \node[shape=circle,draw=black, fill=black, minimum size=0.5cm] (-M) at (-12,0) {};
    \node[shape=circle,draw=black, fill=black, minimum size=0.5cm] (-O) at (-13,0) {};
    \node[shape=circle,draw=orange, minimum size=0.5cm, label=above:{$0$}] (A) at (0,0) {};
    \node[shape=circle,draw=orange,fill fraction=orange, minimum size=0.5cm] (B) at (1,0) {};
    \node[shape=circle,draw=orange,minimum size=0.5cm] (C) at (2,0) {};
    \node[shape=circle,draw=orange, fill=orange, minimum size=0.5cm] (D) at (3,0) {};
    \node[shape=circle,draw=orange, fill=orange, minimum size=0.5cm] (E) at (4,0) {};
    \node[shape=circle,draw=orange, fill=orange,  minimum size=0.5cm] (F) at (5,0) {};
    \node[shape=circle,draw=orange, fill=orange, minimum size=0.5cm] (G) at (6,0) {};
    \node[shape=circle,draw=orange, minimum size=0.5cm] (H) at (7,0) {};
    \node[shape=circle,draw=orange,fill fraction=orange, minimum size=0.5cm] (I) at (8,0) {};
    \node[shape=circle,draw=orange,fill fraction=orange, minimum size=0.5cm] (J) at (9,0) {};
    \node[shape=circle,draw=black,  minimum size=0.5cm] (K) at (10,0) {};
    \node[shape=circle,draw=black, fill=black,  minimum size=0.5cm] (L) at (11,0) {};
    \node[shape=circle,draw=black, fill=black, minimum size=0.5cm] (M) at (12,0) {};
    \node[shape=circle,draw=black, fill=black, minimum size=0.5cm] (O) at (13,0) {};

    \draw[-] (-O)--(-13.7,0){};
    \draw [-] (-O) -- (-M);
     \draw [-] (-L) -- (-M);
     \draw [-] (-L) -- (-K);
    \draw [-] (-J) -- (-K);
    \draw [-] (-I) -- (-J);
    \draw [-] (-H) -- (-I);
    \draw [-] (-G) -- (-H);
    \draw [-] (-F) -- (-G);
    \draw [-] (-F) -- (-E);
    \draw [-] (-E) -- (-D);
    \draw [-] (-D) -- (-C);
    \draw [-] (-C) -- (-B);
    \draw [-] (-B) -- (A);
    \draw [-] (A) -- (B);
    \draw[-](O)--(13.7,0){};
    \draw [-] (M) -- (O);
    \draw [-] (M) -- (L);
    \draw [-] (K) -- (L);
    \draw [-] (K) -- (J);
    \draw [-] (J) -- (I);
    \draw [-] (H) -- (I);
    \draw [-] (G) -- (H);
    \draw [-] (G) -- (F);
    \draw [-] (F) -- (E);
    \draw [-] (E) -- (D);
    \draw [-] (D) -- (C);
    \draw [-] (C) -- (B);

    \draw[-, black, very thick] (-7.5,-0.8) to (6.5,-0.8)  node [text width=2.5cm,midway,above=0.5cm, align=center] {};
    \draw[-, black, very thick] (-7.5,-0.8) to (-7.5,0)  node [text width=2.5cm,midway,above=0.5cm, align=center] {};
    \draw[-, black, very thick] (6.5,-0.8) to (6.5,0)  node [text width=2.5cm,midway,above=0.5cm, align=center] {};
    \scalebox{1.4}{\node[] () at (0,-1.0) {$\Lambda$};}

    \draw[-, cyan, very thick] (-10.5,0.8) to (-4.5,0.8)  node [text width=2.5cm,midway,above=0.5cm, align=center] {};
    \draw[-, cyan, very thick] (-10.5,0.8) to (-10.5,0)  node [text width=2.5cm,midway,above=0.5cm, align=center] {};
    \draw[-, cyan, very thick] (-4.5,0.8) to (-4.5,0)  node [text width=2.5cm,midway,above=0.5cm, align=center] {};
\scalebox{1.4}{\node[] () at (-5.5,0.9) {{\color{cyan}$U_-$}};}

     \draw[-, cyan, very thick] (7.5,0.8) to (9.5,0.8)  node [text width=2.5cm,midway,above=0.5cm, align=center] {};
    \draw[-, cyan, very thick] (7.5,0.8) to (7.5,0)  node [text width=2.5cm,midway,above=0.5cm, align=center] {};
    \draw[-, cyan, very thick] (9.5,0.8) to (9.5,0)  node [text width=2.5cm,midway,above=0.5cm, align=center] {};
    \scalebox{1.4}{\node[] () at (6.1,0.9) {{\color{cyan}$U_+$}};}
    
    \scalebox{1.4}{\node[] () at (3,0.8) {{\color{orange}$I_\Lambda$}};}

\end{tikzpicture}
}
\small \caption{A non-isolated configuration $\omega'$ on the line, with the subset $\Lambda:=[-7,6]$ indicated by a square bracket. Orange-colored sites denote the influencing set $I_\Lambda$. Blue-colored square brackets indicate the outer unfixed areas $U_-$ and $U_+$.}
\label{fig: Existence of the limit}
\end{figure}

\begin{proof}[Proof of Theorem \ref{thm: Existence image specification}]
Let $\Lambda\Subset \Z$ and $\omega'\in \Omega'$. For the construction of a specification $\gamma'_p$ for $\mu_p'$, we will use an idea introduced in \cite{JaKu23}. Start to consider second-layer conditional probabilities in finite volumes $\Lambda\subset\Delta \Subset V$ with a first-layer boundary condition $\omega \in T^{-1}(\omega')$ defined as follows
\begin{equation}\label{eq: second-layer cond. prob}
    \gamma'_{p,\omega,\Delta}(\omega'_\Lambda|\omega'_{\Delta\setminus \Lambda}):= \frac{\sum_{\Tilde{\omega}_\Delta} \mu_p(\Tilde{\omega}_\Delta)\mathds{1}_{T_\Delta(\Tilde{\omega}_\Delta \omega_{\Delta^c})=\omega'_\Delta}}{\sum_{\hat{\omega}_{\Delta }}\mu_p(\hat{\omega}_{\Delta})\mathds{1}_{T_{\Delta \setminus \Lambda}(\hat{\omega}_{\Delta}\omega_{\Delta^c})=\omega'_{\Delta \setminus \Lambda}}}.
\end{equation}
Now, Lemma 3.3 in \cite{JaKu23} ensures that the family $\gamma'_p=(\gamma'_{p,\Lambda})_{\Lambda\Subset \Z}$ of kernels defined by
 \begin{equation}\label{eq: Limit second-layer cond prob}
    	\lim_{\Delta \uparrow V} \gamma'_{p,\omega,\Delta}(\omega'_\Lambda|\omega'_{\Delta \setminus \Lambda})=:\gamma'_{p,\Lambda}(\omega'_\Lambda|\omega'_{\Lambda^c})
    \end{equation}
    is a specification, provided thazt this limit exists and is independent of the choice $\omega\in T^{-1}(\omega')$.

In order to prove the existence of this limit, let us start to choose $\Delta\Subset \Z$ large enough such that $\text{dist}(\Lambda,\Delta^c)>1$ and without loss of generality, we assume that $\Delta$ is connected. The spins inside $\Lambda$ depend on those in $\Delta^c$ only through the outer unfixed areas $U_-$ or $U_+$, see Definition \ref{def: restricted unfixed areas}, which satisfy $\text{dist}(U_-,\Delta^c)\leq 1$ and $\text{dist}(U_+,\Delta^c)\leq 1$, respectively. Thus, we have to discuss three different cases for these unfixed areas.

     \textbf{Case 1.} ($|U_-|<\infty$ and $|U_+|<\infty$)
In this case, the influencing set $I$, given in \eqref{eq: Influence set}, is finite and thus choose $\Delta\subset \Z$ big enough such that $\text{dist}(\bar{I}, \Delta^c)\geq 1$. Consider the expression in \eqref{eq: second-layer cond. prob} and note that the spins of first-layer configurations $\Tilde{\omega},\hat{\omega}\in T^{-1}(\omega')$ are fixed in $\partial_+I$ due to Remark \ref{rk: Influence set}. Consequently, we can rewrite the conditional probability as follows
    \begin{align*}
        \gamma'_{p,\omega,\Delta}(\omega'_\Lambda|\omega'_{\Delta\setminus \Lambda})&= \frac{\sum_{\Tilde{\omega}_{I}} \mu_p(\Tilde{\omega}_{I})\mathds{1}_{T_{I}(\Tilde{\omega}_{I} \omega'_{I^c})=\omega'_{I}}}{\sum_{\hat{\omega}_{I }}\mu_p(\hat{\omega}_{I})\mathds{1}_{T_{I \setminus \Lambda}(\hat{\omega}_{I}\omega'_{I^c})=\omega'_{I \setminus \Lambda}}}\\ \nonumber
        &\times \frac{\sum_{\Tilde{\omega}_{\Delta \cap (\bar{I})^c}} \mu_p(\Tilde{\omega}_{\Delta \cap (\bar{I})^c})\mathds{1}_{T_{\Delta \cap (\bar{I})^c}(\hat{\omega}_{\Delta \cap (\bar{I})^c}\omega'_{\bar{I}} \omega_{\Delta^c})=\omega'_{\Delta \cap (\bar{I})^c}}}{\sum_{\hat{\omega}_{\Delta \cap (\bar{I})^c }}\mu_p(\hat{\omega}_{\Delta \cap (\bar{I})^c})\mathds{1}_{T_{(\Delta \cap (\bar{I})^c) \setminus \Lambda}(\hat{\omega}_{\Delta \cap (\bar{I})^c }\omega'_{\bar{I}}\omega_{\Delta^c})=\omega'_{(\Delta \cap (\bar{I})^c) \setminus \Lambda}}}
    \end{align*}
    because the map $T_\Lambda$ is $\mathscr{F}_{\Bar{\Lambda}}$-measurable for all $\Lambda\Subset \Z$. Furthermore, note that $(\Delta \cap (\bar{I})^c) \setminus \Lambda=(\Delta \cap (\bar{I})^c)$, since $\Lambda\subset I$, and hence the second factor equals one. The remaining term
    \begin{equation}\label{eq: Proof limit image spec 0}
        \frac{\sum_{\Tilde{\omega}_{I}} \mu_p(\Tilde{\omega}_{I})\mathds{1}_{T_{I}(\Tilde{\omega}_{I} \omega'_{I^c})=\omega'_{I}}}{\sum_{\hat{\omega}_{I }}\mu_p(\hat{\omega}_{I})\mathds{1}_{T_{I \setminus \Lambda}(\hat{\omega}_{I}\omega'_{I^c})=\omega'_{I \setminus \Lambda}}}
    \end{equation}
    is equal to $\gamma'_\Lambda(\omega'_\Lambda|\omega'_{\Lambda^c})$ as it is independent of $\Delta$ and $\omega$. Let us continue to prove the equality in \eqref{eq: Kernels image specification}. We define the denominator of \eqref{eq: Proof limit image spec 0}, depending only on $\omega'_{\Lambda^c}$, as the partition function $Z_{\Lambda}(\omega'_{\Lambda^c})$. By definition of the influencing set in \eqref{eq: Influence set}, the numerator can be rewritten as follows 
\begin{equation}\label{eq: Proof limit image spec 1}
    \sum_{\Tilde{\omega}_{\overline{\Theta}_{\bar{\Lambda}}}} \mu_p(\Tilde{\omega}_{\overline{\Theta}_{\bar{\Lambda}}})\mathds{1}_{T_{\overline{\Theta}_{\bar{\Lambda}}}(\Tilde{\omega}_{\overline{\Theta}_{\bar{\Lambda}}} \omega'_{(\overline{\Theta}_{\bar{\Lambda}})^c})=\omega'_{\overline{\Theta}_{\bar{\Lambda}}}}\cdot\prod_{U\in \mathscr{U}_\Lambda}\sum_{\Tilde{\omega}_{U}}\mu_p(\Tilde{\omega}_{U})\mathds{1}_{T_{U}(\Tilde{\omega}_{U} \omega'_{U^c})=0'_{U}}.
\end{equation}
Recall that the first-layer configurations $\Tilde{\omega}\in \Omega$ of $\omega'$ are fixed on $\overline{\Theta}_{\bar{\Lambda}}$ and consequently, the first sum is equal to $\mu_p(\omega'_{\overline{\Theta}_{\Lambda}})$, see Remark \ref{rk: Fixed area}. Furthermore, the probability in the unfixed areas is given by the transfer operator $Q$, defined in \eqref{eq: Transfer matrix Q}. Note that
\begin{equation}\label{eq: Relation Q and prob.}
    \sum_{\omega_U\in \Omega_U}\mu_p(\omega_U)\mathds{1}_{T_U(\omega_U\omega_{U^c})=0'_U}= \frac{Q^{|U|+1}(\omega_{\min \overline{U}},\omega_{\max \overline{U}})}{\sqrt{\mu_p(\omega_{\min \overline{U}})}\sqrt{\mu_p(\omega_{\max \overline{U}})}}
\end{equation}
holds for all $U\Subset \Z$ connected and $\omega\in \Omega$. The prefactor is due to the fact that we do not consider the probabilities in $\partial_+ U$ but their square root is contained in each matrix product. Using the relation \eqref{eq: Relation Q and prob.} with the fact that the unfixed areas are surrounded by fixed unoccupied sites, we have
\begin{equation*}
     \mu_p(\omega'_{\overline{\Theta}_{\bar{\Lambda}}})\cdot\prod_{U\in \mathscr{U}_\Lambda}\frac{Q^{|U|+1}(0,0)}{(1-p)}
\end{equation*}
for the expression in \eqref{eq: Proof limit image spec 1}. Rewriting the Bernoulli weights on $\overline{\Theta}_{\bar{\Lambda}}$ and expressing the matrix products in terms of the partition functions $Z(U)$ leads to the desired expression in \eqref{eq: Kernels image specification}.

    \textbf{Case 2.} ($\inf U_-=-\infty$ and $\sup U_+<\infty$ or $\inf U_->-\infty$ and $\sup U_+=\infty$)
    We verify the statement for the case $\inf U_-=-\infty$ and $\sup U_+<\infty$, first assuming that $U_- \neq U_+$. Choose $\Delta\Subset \Z$ large enough such that $\max \overline{I}\in  \Delta$. From the fact that the spins of first-layer configurations $\Tilde{\omega},\hat{\omega}\in T^{-1}(\omega')$ are fixed at $\max \bar{I}$, we obtain
    \begin{align*}
        \gamma'_{p,\omega,\Delta}(\omega'_\Lambda|\omega'_{\Delta\setminus \Lambda})&=
        \frac{\sum_{\Tilde{\omega}_{\Delta \cap I }} \mu_p(\Tilde{\omega}_{\Delta \cap I })\mathds{1}_{T_{\Delta \cap I }(\Tilde{\omega}_{\Delta \cap I } \omega'_{\Delta \cap I^c }\omega_{\Delta^c})=\omega'_{\Delta \cap I }}}{\sum_{\hat{\omega}_{\Delta \cap I  }}\mu_p(\hat{\omega}_{\Delta \cap I })\mathds{1}_{T_{(\Delta \cap I ) \setminus \Lambda}(\hat{\omega}_{\Delta \cap I }\omega'_{\Delta \cap I^c }\omega_{\Delta^c})=\omega'_{(\Delta \cap I ) \setminus \Lambda}}}\\ \nonumber
        &\times \frac{\sum_{\Tilde{\omega}_{\Delta \cap (\bar{I})^c}} \mu_p(\Tilde{\omega}_{\Delta \cap (\bar{I})^c})\mathds{1}_{T_{\Delta \cap (\bar{I})^c}(\hat{\omega}_{\Delta \cap (\bar{I})^c}\omega'_{\Delta\cap\bar{I}} \omega_{\Delta^c})=\omega'_{\Delta \cap (\bar{I})^c}}}{\sum_{\hat{\omega}_{\Delta \cap (\bar{I})^c }}\mu_p(\hat{\omega}_{\Delta \cap (\bar{I})^c})\mathds{1}_{T_{(\Delta \cap (\bar{I})^c) \setminus \Lambda}(\hat{\omega}_{\Delta \cap (\bar{I})^c }\omega'_{\Delta\cap \bar{I}}\omega_{\Delta^c})=\omega'_{(\Delta \cap (\bar{I})^c) \setminus \Lambda}}}
    \end{align*}
    for the probability in \eqref{eq: second-layer cond. prob}. Using $(\Delta \cap (\bar{I})^c) \setminus \Lambda=\Delta \cap (\bar{I})^c$, the second factor is equal to one. Separating the probabilities corresponding to $\Delta \cap U_-$ and $\Delta \cap \Lambda_-$ in the numerator and denominator, respectively, leads to
    \begin{align}\label{eq: Proof limit image spec 2}
        \nonumber\gamma'_{p,\omega,\Delta}(\omega'_\Lambda|\omega'_{\Delta\setminus \Lambda})&=
        \frac{\sum_{\Tilde{\omega}_{\Delta \cap U_- }} \mu_p(\Tilde{\omega}_{\Delta \cap U_- })\mathds{1}_{T_{\Delta \cap U_- }(\Tilde{\omega}_{\Delta \cap U_- } \omega'_{\Delta \cap (U_-)^c }\omega_{\Delta^c})=0'_{\Delta \cap U_- }}}{\sum_{\hat{\omega}_{\Delta \cap \Lambda_-  }}\mu_p(\hat{\omega}_{\Delta \cap \Lambda_- })\mathds{1}_{T_{(\Delta \cap \Lambda_- ) \setminus \Lambda}(\hat{\omega}_{\Delta \cap \Lambda_- }\omega'_{\Delta \cap (\Lambda_-)^c }\omega_{\Delta^c})=0'_{(\Delta \cap \Lambda_- ) \setminus \Lambda}}}\\ 
        &\times\frac{\sum_{\Tilde{\omega}_{ I\setminus U_- }} \mu_p(\Tilde{\omega}_{ I\setminus U_- })\mathds{1}_{T_{ I \setminus U_- }(\Tilde{\omega}_{ I \setminus U_- } \omega'_{(I \setminus U_- )^c})=\omega'_{I \setminus U_-}}}{\sum_{\hat{\omega}_{I \setminus \Lambda_-  }}\mu_p(\hat{\omega}_{I \setminus \Lambda_- })\mathds{1}_{T_{(I \setminus \Lambda_- ) \setminus \Lambda}(\hat{\omega}_{I \setminus \Lambda_-}\omega'_{(I \setminus \Lambda_-)^c})=\omega'_{(I \setminus \Lambda_-)  \setminus \Lambda}}}.
    \end{align}
     With the relation \eqref{eq: Relation Q and prob.}, we can express the first fraction as follows
    \begin{equation*}
        \frac{(1-p)^{-\frac{1}{2}}Q^{|\Delta\cap U_-|+1}(\omega_{\min \overline{\Delta}},0)}{\sum_{x\in \{0,1\}}\sqrt{\alpha(x)}Q^{|\Delta\cap \Lambda_-|}(\omega_{\min \overline{\Delta}},x)}
    \end{equation*}
    where $\alpha(x):=p^x(1-p)^{1-x}$ for $x\in \{0,1\}$. This sequence converges to
    \begin{equation*}
        \frac{\lambda_{\text{PF}}^{|V_-\cap U_-|}}{(1-p)\sqrt{p}\sum_{x\in \{0,1\}}\sqrt{\alpha(x)}v_{\text{PF}}(x)}
    \end{equation*}
    for $\Delta \uparrow \Z$ independent of $\omega\in T^{-1}(\omega')$, see Lemma \ref{lem: Limit Matrix Q}. Here, we exploited the fact that $|\Delta\cap U_-|-|V_-\cap U_-|=|\Delta\cap \Lambda_-|-2$. Therefore, define 
    \begin{equation*}
        Z_\Lambda(\omega'_{\Lambda^c}):=\sqrt{p}\sum_{x\in \{0,1\}}\sqrt{\alpha(x)}v_{\text{PF}}(x)\cdot\sum_{\hat{\omega}_{I \setminus \Lambda_-  }}\mu_p(\hat{\omega}_{I \setminus \Lambda_- })\mathds{1}_{T_{(I \setminus \Lambda_- ) \setminus \Lambda}(\hat{\omega}_{I \setminus \Lambda_-}\omega'_{(I \setminus \Lambda_-)^c})=\omega'_{(I \setminus \Lambda_-)  \setminus \Lambda}}
    \end{equation*}
    and the expression in \eqref{eq: Proof limit image spec 2} reads
    \begin{equation}\label{eq: Proof limit image spec 3}
        Z_\Lambda(\omega'_{\Lambda^c})^{-1}(1-p)^{-1}\lambda_{\text{PF}}^{|V_-\cap U_-|}\cdot\sum_{\Tilde{\omega}_{ I\setminus U_- }} \mu_p(\Tilde{\omega}_{ I\setminus U_- })\mathds{1}_{T_{ I \setminus U_- }(\Tilde{\omega}_{ I \setminus U_- } \omega'_{(I \setminus U_- )^c})=\omega'_{I \setminus U_-}}.
    \end{equation}
    Recalling the definition of the influencing set in \eqref{eq: Influence set} and repeating the arguments that led to \eqref{eq: Kernels image specification} in Case 1, we arrive at
    \begin{equation*}
    \sum_{\Tilde{\omega}_{ I\setminus U_- }} \mu_p(\Tilde{\omega}_{ I\setminus U_- })\mathds{1}_{T_{ I \setminus U_- }(\Tilde{\omega}_{ I \setminus U_- } \omega'_{(I \setminus U_- )^c})=\omega'_{I \setminus U_-}}=\mu_p(\omega'_{\overline{\Theta}_{\bar{\Lambda}}})\cdot\prod_{U\in \mathscr{U}_\Lambda\setminus \{U_-\}}\frac{Q^{|U|+1}(0,0)}{(1-p)}.
    \end{equation*}
    This directly implies the stated expression for $\gamma'_{p,\Lambda}$ in \eqref{eq: Kernels image specification}. One can proceed similarly in the case $U_-=U_+$. Here, $I=U_-$, and the numerator of the second fraction in \eqref{eq: Proof limit image spec 2} equals one, which is the only difference in this case.

\textbf{Case 3.} ($\inf U_-=-\infty$ and $\sup U_+=\infty$)
We restrict our discussion to the case $|\Lambda|\neq 1$. The case $|\Lambda|=1$ is trivial as the conditional probabilities in \eqref{eq: second-layer cond. prob} equal one for sufficiently large $\Delta$ and for all non-isolated configurations $\omega'\in \Omega'$. We start to consider the case $U_-\neq U_+$. Here, the conditional probabilities can be written as
    \begin{align}\label{eq: Proof limit image spec 4}
        \gamma'_{p,\omega,\Delta}(\omega'_\Lambda&|\omega'_{\Delta\setminus \Lambda})=
        \frac{\sum_{\Tilde{\omega}_{\Delta \cap U_- }} \mu_p(\Tilde{\omega}_{\Delta \cap U_- })\mathds{1}_{T_{\Delta \cap U_- }(\Tilde{\omega}_{\Delta \cap U_- } \omega'_{\Delta \cap (U_-)^c }\omega_{\Delta^c})=0'_{\Delta \cap U_- }}}{\sum_{\hat{\omega}_{\Delta \cap \Lambda_-  }}\mu_p(\hat{\omega}_{\Delta \cap \Lambda_- })\mathds{1}_{T_{(\Delta \cap \Lambda_- ) \setminus \Lambda}(\hat{\omega}_{\Delta \cap \Lambda_- }\omega'_{\Delta \cap (\Lambda_-)^c }\omega_{\Delta^c})=0'_{(\Delta \cap \Lambda_- ) \setminus \Lambda}}}\\ \nonumber
        \nonumber&\times \frac{\sum_{\Tilde{\omega}_{\Delta \cap U_+ }} \mu_p(\Tilde{\omega}_{\Delta \cap U_+ })\mathds{1}_{T_{\Delta \cap U_+ }(\Tilde{\omega}_{\Delta \cap U_+ } \omega'_{\Delta \cap (U_+)^c }\omega_{\Delta^c})=0'_{\Delta \cap U_+ }}}{\sum_{\hat{\omega}_{\Delta \cap \Lambda_+  }}\mu_p(\hat{\omega}_{\Delta \cap \Lambda_+ })\mathds{1}_{T_{(\Delta \cap \Lambda_+ ) \setminus \Lambda}(\hat{\omega}_{\Delta \cap \Lambda_+ }\omega'_{\Delta \cap (\Lambda_+)^c }\omega_{\Delta^c})=0'_{(\Delta \cap \Lambda_+ ) \setminus \Lambda}}}\\ \nonumber
        \nonumber&\times\frac{\sum_{\Tilde{\omega}_{ I\setminus (U_-\cup U_+) }} \mu_p(\Tilde{\omega}_{ I\setminus (U_- \cup U_+) })\mathds{1}_{T_{ I \setminus (U_- \cup U_+) }(\Tilde{\omega}_{ I \setminus (U_- \cup U_+) } \omega'_{(I \setminus (U_- \cup U_+) )^c})=\omega'_{I \setminus (U_- \cup U_+)}}}{\sum_{\hat{\omega}_{I \setminus (\Lambda_- \cup \Lambda_+)  }}\mu_p(\hat{\omega}_{I \setminus (\Lambda_- \cup \Lambda_+) })\mathds{1}_{T_{(I \setminus (\Lambda_- \cup \Lambda_+) ) \setminus \Lambda}(\hat{\omega}_{I \setminus (\Lambda_- \cup \Lambda_+)}\omega'_{(I \setminus (\Lambda_- \cup \Lambda_+))^c})=\omega'_{(I \setminus (\Lambda_- \cup \Lambda_+))  \setminus \Lambda}}}
    \end{align}
    by separating the probabilities corresponding to $\Delta \cap U_-$, $\Delta \cap \Lambda_-$, $\Delta \cap U_+$ and $\Delta \cap \Lambda_+$. The first two fractions can be expressed in terms of the transfer matrix $Q$ as follows
        \begin{equation*}
        \frac{(1-p)^{-1/2}Q^{|\Delta\cap U_-|+1}(\omega_{\min \overline{\Delta}},0)}{\sum_{x\in \{0,1\}}\sqrt{\alpha(x)}Q^{|\Delta\cap \Lambda_-|}(\omega_{\min \overline{\Delta}},x)}\cdot\frac{(1-p)^{-1/2}Q^{|\Delta\cap U_+|+1}(0,\omega_{\max \overline{\Delta}})}{\sum_{y\in \{0,1\}}\sqrt{\alpha(y)}Q^{|\Delta\cap \Lambda_+|}(y,\omega_{\max \overline{\Delta}})},
    \end{equation*}
    see relation \eqref{eq: Relation Q and prob.}. If we take the limit $\Delta\uparrow \Z$ and apply Lemma  \ref{lem: Limit Matrix Q}, we obtain 
    \begin{equation*}
        \frac{(\lambda_{\text{PF}})^{|V_-\cap U_-|}}{(1-p)\sqrt{p}\sum_{x\in \{0,1\}}\sqrt{\alpha(x)}v_{\text{PF}}(x)}\cdot \frac{(\lambda_{\text{PF}})^{|V_+\cap U_+|}}{(1-p)\sqrt{p}\sum_{y\in \{0,1\}}\sqrt{\alpha(y)}v_{\text{PF}}(y)}
    \end{equation*}
    independent of the first-layer boundary condition $\omega\in T^{-1}(\omega')$. Choosing 
\begin{align*}
    Z_\Lambda(\omega'_{\Lambda^c})&:=p\sum_{x\in \{0,1\}}\sqrt{\alpha(x)}v_{\text{PF}}(x)\sum_{y\in \{0,1\}}\sqrt{\alpha(y)}v_{\text{PF}}(y)\\
    &\times\sum_{\hat{\omega}_{I \setminus (V_- \cup V_+)  }}\mu_p(\hat{\omega}_{I \setminus (V_- \cup V_+) })\mathds{1}_{T_{(I \setminus (V_- \cup V_+) ) \setminus \Lambda}(\hat{\omega}_{I \setminus (V_- \cup V_+)}\omega'_{(I \setminus (V_- \cup V_+))^c})=\omega'_{(I \setminus (V_- \cup V_+))  \setminus \Lambda}},
\end{align*}
we can rewrite \eqref{eq: Proof limit image spec 4} as follows
\begin{align}\label{eq: Proof limit image spec 5}
    \nonumber&Z_\Lambda(\omega'_{\Lambda^c})^{-1}(1-p)^{-2}(\lambda_{\text{PF}})^{|V_-\cap U_-|+|V_+\cap U_+|}\\
    &\times\sum_{\Tilde{\omega}_{ I\setminus (U_-\cup U_+) }} \mu_p(\Tilde{\omega}_{ I\setminus (U_- \cup U_+) })\mathds{1}_{T_{ I \setminus (U_- \cup U_+) }(\Tilde{\omega}_{ I \setminus (U_- \cup U_+) } \omega'_{(I \setminus (U_- \cup U_+) )^c})=\omega'_{I \setminus (U_- \cup U_+)}}.
\end{align}
The last sum becomes $\mu_p(\omega'_{\overline{\Theta}_{\bar{\Lambda}}})\prod_{U\in \mathscr{U}_\Lambda\setminus \{U_-,U_+\}}Q^{|U|+1}(0,0)/(1-p)$ by rewriting it with the definition of the influencing set. Hence, the statement of \eqref{eq: Kernels image specification} follows.

If $U_-=U_+=\Z$, we have
    \begin{align*}
        \gamma'_{p,\omega,\Delta}(\omega'_\Lambda&|\omega'_{\Delta\setminus \Lambda})=
        \frac{\sum_{\Tilde{\omega}_{\Delta}} \mu_p(\Tilde{\omega}_{\Delta})\mathds{1}_{T_{\Delta}(\Tilde{\omega}_{\Delta} \omega_{\Delta^c})=0'_{\Delta  }}}{\sum_{\hat{\omega}_{\Delta \cap \Lambda_-  }}\mu_p(\hat{\omega}_{\Delta \cap \Lambda_- })\mathds{1}_{T_{(\Delta \cap \Lambda_- ) \setminus \Lambda}(\hat{\omega}_{\Delta \cap \Lambda_- }\omega'_{\Delta \cap (\Lambda_-)^c }\omega_{\Delta^c})=0'_{(\Delta \cap \Lambda_- ) \setminus \Lambda}}}\\ \nonumber
        &\times \frac{1}{\sum_{\hat{\omega}_{\Delta \cap \Lambda_+  }}\mu_p(\hat{\omega}_{\Delta \cap \Lambda_+ })\mathds{1}_{T_{(\Delta \cap \Lambda_+ ) \setminus \Lambda}(\hat{\omega}_{\Delta \cap \Lambda_+ }\omega'_{\Delta \cap (\Lambda_+)^c }\omega_{\Delta^c})=0'_{(\Delta \cap \Lambda_+ ) \setminus \Lambda}}}\\ \nonumber
        &\times\frac{1}{\sum_{\hat{\omega}_{I \setminus (\Lambda_- \cup \Lambda_+)  }}\mu_p(\hat{\omega}_{I \setminus (\Lambda_- \cup \Lambda_+) })\mathds{1}_{T_{(I \setminus (\Lambda_- \cup \Lambda_+) ) \setminus \Lambda}(\hat{\omega}_{I \setminus (\Lambda_- \cup \Lambda_+)}\omega'_{(I \setminus (\Lambda_- \cup \Lambda_+))^c})=\omega'_{(I \setminus (\Lambda_- \cup \Lambda_+))  \setminus \Lambda}}}.
    \end{align*}
    The first two fractions can be expressed in terms of the transfer matrix $Q$ as follows
        \begin{equation*}
        \frac{Q^{|\Delta|+1}(\omega_{\min \overline{\Delta}},\omega_{\max \overline{\Delta}})}{\sum_{x\in \{0,1\}}\sqrt{\alpha(x)}Q^{|\Delta\cap \Lambda_-|}(\omega_{\min \overline{\Delta}},x)\cdot\sum_{y\in \{0,1\}}\sqrt{\alpha(y)}Q^{|\Delta\cap \Lambda_+|}(y,\omega_{\max \overline{\Delta}})}.
    \end{equation*}
    Taking the cofinal limit $\Delta\uparrow \Z$ leads to 
    \begin{equation*}
        \frac{C(p)(\lambda_{\text{PF}})^{|V_-\cap V_+|-3}}{\sum_{x\in \{0,1\}}\sqrt{\alpha(x)}v_{\text{PF}}(x)\sum_{y\in \{0,1\}}\sqrt{\alpha(y)}v_{\text{PF}}(y)}
    \end{equation*}
    independent of the sequence $\Delta$ and the first-layer boundary condition $\omega\in T^{-1}(\omega')$, see Lemma \ref{lem: Limit Matrix Q}. Here, we applied the third statement of Lemma \ref{lem: Limit Matrix Q} together with the relation $|\Delta\cap \Lambda_-|+|\Delta\cap \Lambda_+|-|\Delta|=-|V_-\cap V_+|+4$. Defining the denominator as $Z_\Lambda(\omega'_{\Lambda^c})$, we arrive at the expression for $\gamma'_{p,\Lambda}$ in \eqref{eq: Kernels image specification}.  
\end{proof}

\subsection{Theorem \ref{thm: Quasilocality Gamma'}: Quasilocality and asymptotics of $\gamma'_p$} \label{Subsec: Quasilocality Gamma'}

In order to derive the exponential bounds in \eqref{eq: Bounds on sensitivity}, it is necessary to compute the explicit form of $Q^m$. From \eqref{eq: N-th power Q generalized}, we obtain
\begin{align}\label{eq: N-th power Q}
     Q^m&=\frac{\lambda_{PF}^m}{\sqrt{(1-p)(3p+1)}}\begin{pmatrix}
         \lambda_{PF}-a^m\lambda_r&(1-a^m)\sqrt{p(1-p)}\\
    (1-a^m)\sqrt{p(1-p)}&-\lambda_r+a^m\lambda_{PF}
    \end{pmatrix}
\end{align}
for all $m\in \N$. In the following, we will prove the upper and lower bounds of \eqref{eq: Bounds on sensitivity} separately.

\begin{proof}[Proof of lower bound in Theorem \ref{thm: Quasilocality Gamma'}]

 Let $l,r,L,R\in  \Z$  satisfying $L<l\leq r<R$ and recall that $n=\min\{l-L,R-r\}$. Without loss of generality, we assume that $n=l-L$. We lower bound the sensitivity in \eqref{eq: Sensitivity of b.c. variations} by choosing $\omega',\eta'\in \Omega'$ where $\omega'_{(l,+\infty)}=\eta'_{(l,+\infty)}=1'_{(l,+\infty)}$ and especially
    \begin{equation*}
        \omega'_{(-\infty,l]}=1_{(-\infty,l-n)}0_{[l-n,l]}
    \end{equation*}
    as well as
    \begin{equation*}
        \eta'_{(-\infty,l]}=1_{(-\infty,l-n-1)}0_{[l-n-1,l]}.
    \end{equation*}
     Figure \ref{fig: Configs lower bound Quasilocality} presents an illustration of these configurations for $n=4$.

\begin{figure}[ht!]
\centering
\scalebox{0.5}{
\begin{tikzpicture}[every label/.append style={scale=1.3},fill fraction/.style={path picture={
\fill[#1] 
(path picture bounding box.south) rectangle
(path picture bounding box.north west);
}},
fill fraction/.default=gray!50]
    \node[shape=circle,draw=black, fill=black, minimum size=0.5cm] (-B) at (-1,0) {};
    \node[shape=circle,draw=black, fill=black,minimum size=0.5cm] (-C) at (-2,0) {};
    \node[shape=circle,draw=black, fill=black, minimum size=0.5cm] (-D) at (-3,0) {};
    \node[shape=circle,draw=black, fill=black, minimum size=0.5cm] (-E) at (-4,0) {};
    \node[shape=circle,draw=black,fill=black,  minimum size=0.5cm] (-F) at (-5,0) {};
    \node[shape=circle,draw=black, minimum size=0.5cm] (-G) at (-6,0) {}; \node[shape=circle,draw=black,fill fraction=black, minimum size=0.5cm] (-H) at (-7,0) {};
    \node[shape=circle,draw=black,fill fraction=black, minimum size=0.5cm] (-I) at (-8,0) {};
    \node[shape=circle,draw=black,fill fraction=black, minimum size=0.5cm] (-J) at (-9,0) {};
    \node[shape=circle,draw=black,  minimum size=0.5cm] (-K) at (-10,0) {};
    \node[shape=circle,draw=cyan, fill=cyan, minimum size=0.5cm] (-L) at (-11,0) {};
    \node[shape=circle,draw=cyan, fill=cyan, minimum size=0.5cm] (-M) at (-12,0) {};
    \node[shape=circle,draw=cyan, fill=cyan, minimum size=0.5cm] (-O) at (-13,0) {};

    \node[shape=circle,draw=orange, fill=orange, minimum size=0.5cm] (C) at (2,0) {};
    \node[shape=circle,draw=orange, fill=orange, minimum size=0.5cm] (D) at (3,0) {};
    \node[shape=circle,draw=orange, minimum size=0.5cm] (E) at (4,0) {};
    \node[shape=circle,draw=black, fill fraction=black,  minimum size=0.5cm] (F) at (5,0) {};
    \node[shape=circle,draw=black,fill fraction=black, minimum size=0.5cm] (G) at (6,0) {};
    \node[shape=circle,draw=black,fill fraction=black, minimum size=0.5cm] (H) at (7,0) {};
    \node[shape=circle,draw=black, fill fraction=black,  minimum size=0.5cm] (I) at (8,0) {};
    \node[shape=circle,draw=black,  minimum size=0.5cm] (J) at (9,0) {};
    \node[shape=circle,draw=black, fill=black,  minimum size=0.5cm] (K) at (10,0) {};
    \node[shape=circle,draw=black, fill=black,  minimum size=0.5cm] (L) at (11,0) {};
    \node[shape=circle,draw=black, fill=black,  minimum size=0.5cm] (M) at (12,0) {};
    \node[shape=circle,draw=black, fill=black,  minimum size=0.5cm] (N) at (13,0) {};
      \node[shape=circle,draw=black, fill=black,  minimum size=0.5cm] (O) at (14,0) {};

    \draw[-] (-O)--(-13.7,0){};
    \draw [-] (-O) -- (-M);
     \draw [-] (-L) -- (-M);
     \draw [-] (-L) -- (-K);
    \draw [-] (-J) -- (-K);
    \draw [-] (-I) -- (-J);
    \draw [-] (-H) -- (-I);
    \draw [-] (-G) -- (-H);
    \draw [-] (-F) -- (-G);
    \draw [-] (-F) -- (-E);
    \draw [-] (-E) -- (-D);
    \draw [-] (-D) -- (-C);
    \draw [-] (-C) -- (-B);
    \draw [-] (-B) -- (A);
   
    \draw[-](O)--(14.7,0){};
    \draw [-] (O) -- (N);
    \draw [-] (M) -- (N);
    \draw [-] (M) -- (L);
    \draw [-] (K) -- (L);
    \draw [-] (K) -- (J);
    \draw [-] (J) -- (I);
    \draw [-] (H) -- (I);
    \draw [-] (G) -- (H);
    \draw [-] (G) -- (F);
    \draw [-] (F) -- (E);
    \draw [-] (E) -- (D);
    \draw [-] (D) -- (C);
    \draw [-] (C) -- (B);

    \draw[-, black, very thick] (-6.5,0.8) to (-3.5,0.8)  node [text width=2.5cm,midway,above=0.5cm, align=center] {};
    \draw[-, black, very thick] (-6.5,0.8) to (-6.5,0)  node [text width=2.5cm,midway,above=0.5cm, align=center] {};
    \draw[-, black, very thick] (-3.5,0.8) to (-3.5,0)  node [text width=2.5cm,midway,above=0.5cm, align=center] {};
\scalebox{1.4}{\node[] () at (-3.6,0.9) {{\color{black}$[l,r]$}};}

\draw[-, black, very thick] (-10.5,1) to (-10.5,-1)  node [text width=2.5cm,midway,above=0.5cm, align=center] {};
\draw[-, black, very thick] (4.5,1) to (4.5,-1)  node [text width=2.5cm,midway,above=0.5cm, align=center] {};

     \draw[-, black, very thick] (8.5,0.8) to (11.5,0.8)  node [text width=2.5cm,midway,above=0.5cm, align=center] {};
    \draw[-, black, very thick] (8.5,0.8) to (8.5,0)  node [text width=2.5cm,midway,above=0.5cm, align=center] {};
    \draw[-, black, very thick] (11.5,0.8) to (11.5,0)  node [text width=2.5cm,midway,above=0.5cm, align=center] {};
    \scalebox{1.4}{\node[] () at (7.1,0.9) {{\color{black}$[l,r]$}};}
    
    \scalebox{1.4}{\node[] () at (2.2,0.8) {{\color{orange}$\eta'$}};}
    \scalebox{1.4}{\node[] () at (3.6,-0.5) {{\color{black}$L$}};}
    \scalebox{1.4}{\node[] () at (-8.5,0.8) {{\color{cyan}$\omega'$}};}
    \scalebox{1.4}{\node[] () at (-7.1,-0.5) {{\color{black}$L$}};}

\end{tikzpicture}
}
\small \caption{Depicted are two non-isolated configurations $\omega',\eta'\in \Omega'$ on the line together with a finite observation window $[l,r]$ containing three sites, indicated by a square bracket. The two configurations differ at a distance of five from the set $[l,r]$.}
\label{fig: Configs lower bound Quasilocality}
\end{figure}
    
    Note that the configurations $\omega'$ and $\eta'$ posses the outer unfixed areas $U_-=(l-n,l)$ and $\Tilde{U}_-=(l-n-1,l)$, respectively. All remaining parts of these configurations correspond to the fixed area $\overline{\Theta}$. Note that the partition function for the kernel corresponding to $\omega'$ is given by 
   \begin{equation*}
        Z_{[l,r]}(\omega'_{[l,r]^c})=\sum_{\hat{\omega}_{I }}\mu_p(\hat{\omega}_{I})\mathds{1}_{T_{I \setminus [l,r]}(\hat{\omega}_{I}\omega'_{I^c})=\omega'_{I \setminus [l,r]}}
   \end{equation*}
   where $I$ is the influencing set of $[l,r]$ for $\omega'_{[l,r]^c}$, see Definition \ref{def: restricted unfixed areas} and Theorem \ref{thm: Existence image specification}.
   Recall that $[l,r]\subset I$ and the map $T_{\Lambda}$ is $\mathscr{F}_{\Bar{\Lambda}}$-measurable for all $\Lambda \subset \Z$. Consequently, the indicator appearing in the partition function does not depend on the spins in $(l,r)$, and may therefore be expressed as
 \begin{equation}\label{eq: Partition function kernel rewritten}
        \sum_{\hat{\omega}_{I\setminus (l,r)}}\mu_p(\hat{\omega}_{I\setminus (l,r)})\mathds{1}_{T_{I \setminus [l,r]}(\hat{\omega}_{I\setminus (l,r)}\omega'_{I^c\cup (l,r)})=\omega'_{I \setminus [l,r]}}.
   \end{equation}
   Note that $I\setminus[l,r]= U_-\cup \{r+1\}$ and decompose the indicator as follows 
   \begin{align*}
       \mathds{1}_{T_{I \setminus [l,r]}(\hat{\omega}_{I\setminus (l,r)}\omega'_{I^c\cup (l,r)})=\omega'_{I \setminus [l,r]}}&=\mathds{1}_{T_{U_-}(\hat{\omega}_{[\min U_-,l]}\omega'_{[\min U_-,l]^c})=0'_{U_-}}\cdot\mathds{1}_{T_{\{r+1\}}(\hat{\omega}_{[r,r+1]}\omega'_{[r,r+1]^c})=1_{\{r+1\}}}.
   \end{align*}
   Note that the last indicator forces $\hat{\omega}_{\{r+1\}}$ to be one independently of the value $\hat{\omega}_{\{r\}}$. Inserting this equation in \eqref{eq: Partition function kernel rewritten} and using relation \eqref{eq: Relation Q and prob.} together with the fact that $\omega'_{\min U_--1}=0$, we obtain 
\begin{equation*}
       Z_{[l,r]}(\omega'_{[l,r]^c})=p(1-p)^{-\frac{1}{2}}\sum_{x \in \{0,1\}} \sqrt{\alpha(x)}Q^{|U_-|+1}(0,x)
   \end{equation*}
   where we recall that $\alpha(x)=p^x(1-p)^{1-x}$ for all $x\in \{0,1\}$. Analogously, one obtains an expression for $Z_{[l,r]}(\eta'_{[l,r]^c})$. Using Theorem \ref{thm: Existence image specification} together with the expression for the partition functions, we obtain
    \begin{equation}\label{eq: Lower bound quasilocality eq 1}
        \bigg|\frac{Q^{n}(0,0)(1-p)^{-\frac{1}{2}}}{\sum_{x\in \{0,1\}}\sqrt{\alpha(x)}Q^{n}(0,x)}-\frac{Q^{n+1}(0,0)(1-p)^{-\frac{1}{2}}}{\sum_{x\in \{0,1\}}\sqrt{\alpha(y)}Q^{n+1}(0,x)}\bigg|\cdot (1-p)p^{r-l}
    \end{equation}
    as a lower bound for the sensitivity in \eqref{eq: Sensitivity of b.c. variations}. Here, the factor $(1-p)p^{r-l}$ corresponds to the weights on $[l,r]$, which are fixed sites. The remaining terms in \eqref{eq: Lower bound quasilocality eq 1} represent the weights on the outer unfixed areas $U_-$ and $\Tilde{U}_-$, satisfying $|U_-|=n-1$ and $|\Tilde{U}_-|=n$, together with the corresponding parts of the partition functions. Substituting the entries of $Q^m$ from \eqref{eq: N-th power Q}, we obtain
    \begin{equation*}
        \bigg|\frac{\lambda_{\text{PF}}-a^n\lambda_r}{\lambda_{\text{PF}}-a^n\lambda_r+(1-a^n)p}-\frac{\lambda_{\text{PF}}-a^{n+1}\lambda_r}{\lambda_{\text{PF}}-a^{n+1}\lambda_r+(1-a^{n+1})p}\bigg|p^{r-l}.
    \end{equation*}
    This expression is equal to
     \begin{equation}\label{eq: Lower bound quasilocality eq 2}
        (1-a)\frac{\lambda_{\text{PF}}+|\lambda_r|}{\big(\lambda_{\text{PF}}+p-a^n(\lambda_r+p)\big)\big(\lambda_{\text{PF}}+p-a^{n+1}(\lambda_r+p)\big)}p^{r-l+1}|a|^n
    \end{equation}
    where we used that $a\in (-1,0)$ and $|\lambda_r|\leq \lambda_{\text{PF}}$ to omit the absolute value. From the fact that $\lambda_r+p\geq 0$, the denominator is upper bounded by 
    $(\lambda_{\text{PF}}+\lambda_r+2p)^2=(1+p)^2$.
    Note that $(1-a)=\lambda_{\text{PF}}^{-1}(\lambda_{\text{PF}}+|\lambda_r|)$ and $\lambda_{\text{PF}}+|\lambda_r|=\sqrt{(1-p)(3p+1)}$. Consequently, we obtain $\big((1-p)(3p+1)p^{r-l+1}\big)/\big(\lambda_{\text{PF}}(1+p)^2\big)$ as a lower bound for \eqref{eq: Lower bound quasilocality eq 2}. Finally, using the estimate $\lambda_{\text{PF}}\leq \frac{3}{2}\sqrt{1-p}$ and $p\in (0,1)$, 
    we obtain the bound in \eqref{eq: Bounds on sensitivity}. More precisely, the $n$-independent constant $C_-(l,r)=A_-\cdot(b_-)^{r-l+1}$ is given by 
    \begin{equation}\label{eq: Constants b_- and A_-}
        b_-=b_-(p):=p~~\text{and}~~A_-=A_-(p):=\frac{1}{6}\sqrt{1-p}.
    \end{equation}
    \end{proof}

  \begin{proof}[Proof of upper bound in Theorem \ref{thm: Quasilocality Gamma'}] Let $[l,r] \subset [L,R]\Subset \Z$. In order to upper bound the quantity \eqref{eq: Sensitivity of b.c. variations}, we need to compute an upper bound for the difference
   \begin{equation}\label{eq: Quasilocality eq 0}
       \big|
    \gamma'_{p,[l,r]}(\omega'_{[l,r]}|\omega'_{[l,r]^c})-\gamma'_{p,[l,r]}(\omega'_{[l,r]}|\eta'_{[l,r]^c})\big|
   \end{equation}
   covering all possible cases of $\omega',\eta'\in \Omega'$ satisfying $\omega'_{[L,R]}=\eta'_{[L,R]}$. Denote by $U_-,U_+\in \mathscr{U}_{[l,r]}(\omega')$ and $\tilde{U}_-,\tilde{U}_+\in \mathscr{U}_{[l,r]}(\eta')$ the outer unfixed areas corresponding to $\omega'$ and $\eta'$, respectively, as introduced in Definition \ref{def: restricted unfixed areas}. Note that spins in the observation window $[l,r]$ are influenced by perturbations outside of $[L,R]$ if there exists an outer unfixed area which contains $L+1$ or $R-1$, see Theorem \ref{thm: Existence image specification}. Therefore, we discuss three different cases.

   \textbf{Case 1.} ($L+1\in U_-$ and $R-1\in U_+$ and $U_-\neq U_+$) First, note that this also implies $L+1\in \Tilde{U}_-$ and $R-1\in \Tilde{U}_+$, since $\omega'_{[L,R]}=\eta'_{[L,R]}$. Secondly, we do not consider the case of a singleton, i.e., $l=r$, because in this situation both kernels are equal to one. Without loss of generality, we assume that the outer unfixed areas are finite. Otherwise, the specification kernels for configurations with infinite unfixed areas can be constructed via a sequence of kernels for configurations with finite unfixed areas, see proof of Theorem \ref{thm: Existence image specification}. Consequently, the partition function for the kernel corresponding to $\omega'$ is given by \eqref{eq: Partition function kernel rewritten}. Note that $I\setminus[l,r]=[\min U_-,l)\cup (r, \max U_+]$ and decompose the indicator as follows 
   \begin{align*}
       \mathds{1}_{T_{I \setminus [l,r]}(\hat{\omega}_{I\setminus (l,r)}\omega'_{I^c\cup (l,r)})=\omega'_{I \setminus [l,r]}}&=\mathds{1}_{T_{[\min U_-,l)}(\hat{\omega}_{[\min U_-,l]}\omega'_{[\min U_-,l]^c})=0'_{[\min U_-,l)}}\\ &\times\mathds{1}_{T_{(r,\max U_+]}(\hat{\omega}_{[r,\max U_+]}\omega'_{[r,\max U_+]^c})=0'_{(r,\max U_+]}}.
   \end{align*}
   Using the relation \eqref{eq: Relation Q and prob.} together with the fact that $\omega'_{\min U_--1}=0=\omega'_{\max U_++1}$ and $r-l>0$, we obtain 
   \begin{equation}\label{eq: Quasilocality eq 1}
       Z_{[l,r]}(\omega'_{[l,r]^c})=(1-p)^{-1}\sum_{x,y \in \{0,1\}} \sqrt{\alpha(x)}\sqrt{\alpha(y)}Q^{|[\min U_-,l)|+1}(0,x)Q^{|(r,\max U_+]|+1}(y,0).
   \end{equation}
    Analogously, one can express the partition function corresponding to $\eta'$. Define the lengths of the outer unfixed areas $U_-$ and $U_+$ before $l-1$ and after $r+1$, by 
    \begin{equation*}
        \alpha_-:=\big|U_-\cap (-\infty , l-1)\big|~~\text{and}~~\alpha_+:=\big|U_+\cap (r+1,+\infty)\big|.
    \end{equation*}
     Let $\Tilde{\alpha}_-$ and $\Tilde{\alpha}_+$ denote the corresponding quantities associated with $\eta'$. Furthermore, we define 
     \begin{equation*}
        \beta_-:=\big|U_-\cap [l-1,r+1]\big|~~\text{and}~~\beta_+:=\big|U_+\cap [l-1,r+1]\big|
    \end{equation*}
    as the length of $U_-$ and $U_+$ inside of the interval $[l-1,r+1]$. Note that these quantities coincide for $\Tilde{U}_-$ and $\Tilde{U}_+$ as $\omega'_{[l-1,r+1]}=\eta'_{[l-1,r+1]}$ for $L\leq l-1$ and $R\geq r+1$. Consequently, we have $|U_-|=\alpha_-+\beta_-$ and $|U_+|=\alpha_++\beta_+$, as well as $|\Tilde{U}_-|=\Tilde{\alpha}_-+\beta_-$ and $|\Tilde{U}_+|=\Tilde{\alpha}_++\beta_+$. Combining Theorem \ref{thm: Existence image specification} and \eqref{eq: Quasilocality eq 1}, we can write
      \begin{align}\label{eq: Quasilocality eq 2}
       \nonumber&p^{|\Theta\cap[l-1,r+1]|}(1-p)^{|\partial_+ \Theta \cap [l-1,r+1]|}\Big(\prod_{U\in \mathscr{U}_{[l,r]}(\omega')\setminus\{U_-,U_+\}}Z(U)\Big)\\
       &\times\big|f(\alpha_-,\beta_-)f(\alpha_+,\beta_+)-f(\Tilde{\alpha}_-,\beta_-)f(\Tilde{\alpha}_+,\beta_+)\big|
   \end{align}
   for the difference in \eqref{eq: Quasilocality eq 0}. The functions
   \begin{equation*}
       f(i,j):=\frac{(1-p)^{-\frac{1}{2}}Q^{i+j+1}(0,0)}{\sum_{x\in \{0,1\}}\sqrt{\alpha(x)}Q^{i+2}(0,x)},~~i,j\in \N_0
   \end{equation*}
   appearing in \eqref{eq: Quasilocality eq 2}, represent the weights of the outer unfixed areas. We can bound the difference $|f(\alpha_-,\beta_-)f(\alpha_+,\beta_+)-f(\Tilde{\alpha}_-,\beta_-)f(\Tilde{\alpha}_+,\beta_+)|$ in \eqref{eq: Quasilocality eq 2} from above as follows 
   \begin{align}\label{eq: Quasilocality eq 3}
        f(\alpha_+,\beta_+)\big|f(\alpha_-,\beta_-)-f(\Tilde{\alpha}_-,\beta_-)\big|+f(\Tilde{\alpha}_-,\beta_-)\big|f(\alpha_+,\beta_+)-f(\Tilde{\alpha}_+,\beta_+)\big|.
   \end{align}
   Consequently, it is enough to compute an upper bound for the quantities $f(i,j)$ and\\ $|f(i,j)-f(\Tilde{i},j)|$ where $j\in \N_0$ and $i,\Tilde{i}\in \N_0$ satisfy $i,\Tilde{i}\geq n-2$ with $n:=\min\{R-r,l-L\}$. First of all, one obtains 
   \begin{equation*}
       f(i,j)=\lambda_{\text{PF}}^{j-1}\frac{\lambda_{\text{PF}}-a^{i+j+1}\lambda_r}{\lambda_{\text{PF}}+p-a^{i+2}(\lambda_r+p)}
   \end{equation*}
   after inserting the entries of $Q^m$, given in \eqref{eq: N-th power Q}. Using the fact that $a\in (-1,0)$, we obtain
   \begin{equation}\label{eq: Quasilocality eq 4}
       f(i,j)\leq \lambda_{\text{PF}}^{j-1}\frac{\lambda_{\text{PF}}+a^{2}|\lambda_r|}{\lambda_{\text{PF}}+p-a^2(\lambda_r+p)}=\lambda_{\text{PF}}^{j-1}\frac{1}{1+p}.
   \end{equation} 
 Similarly, the difference $|f(i,j)-f(\Tilde{i},j)|$ reads
 \begin{equation*}
     \lambda_{\text{PF}}^{j}\frac{a^2|a^{i}-a^{\Tilde{i}}||(\lambda_r+p)-a^{j}(\lambda_{\text{PF}}+p)|}{\big(\lambda_{\text{PF}}+p-a^{i+2}(\lambda_r+p)\big)\big(\lambda_{\text{PF}}+p-a^{\Tilde{i}+2}(\lambda_r+p)\big)}
 \end{equation*}
 and applying $\max_{\substack{k,\Tilde{k}\in \N_{\geq n-2}}}|a^k-a^{\Tilde{k}}|=|a|^{n-2}(1-a)$
 leads to the upper bound
    \begin{equation}\label{eq: Quasilocality eq 5}
        |a|^{n}\lambda_{\text{PF}}^{j}\frac{(1-a)(\lambda_{\text{PF}}+\lambda_r+2p)}{\lambda_{\text{PF}}+p-a^2(\lambda_r+p)}=|a|^{n}\lambda_{\text{PF}}^{j+1}\frac{1}{1-p}.
    \end{equation}
     Combining the upper bounds \eqref{eq: Quasilocality eq 3}-\eqref{eq: Quasilocality eq 5} results in 
    \begin{equation}\label{eq: Quasilocality eq 6}
        |f(\alpha_-,\beta_-)f(\alpha_+,\beta_+)-f(\Tilde{\alpha}_-,\beta_-)f(\Tilde{\alpha}_+,\beta_+)|\leq  2(\lambda_{\text{PF}})^{\beta_-+\beta_+}|a|^{n}(1-p^2)^{-1}.
    \end{equation}
    Finally, we give an upper bound for the remaining weights in \eqref{eq: Quasilocality eq 2}. To this end, note that
    \begin{equation*}
        \frac{Q^m(0,0)}{(1-p)}\leq \lambda_{\text{PF}}^{m-2}\frac{\lambda_{\text{PF}}^2(\lambda_{\text{PF}}+a^2|\lambda_r|)}{(1-p)\sqrt{(1-p)(3p+1)}}= \lambda_{\text{PF}}^{m-2}
    \end{equation*}
    for all $m\in \N_{\geq 2}$. This implies $Z(U)\leq \lambda_{\text{PF}}^{|U|-1}$ for all $U\in \mathscr{U}$. However, each finite unfixed area $U\in \mathscr{U}$ is surrounded by unoccupied fixed sites which are located at $\partial_+U$. Therefore, we have
    \begin{equation*}
        (1-p)^2Z(U)\leq \max\{(1-p)^{2/3},\lambda_{\text{PF}}\}^{|U|+2}=\lambda_{\text{PF}}^{|U|+2}
    \end{equation*}
    for each $U\in \mathscr{U}$. Consequently, we obtain
    \begin{align}\label{eq: Quasilocality eq 7}
       p^{|\Theta\cap[l-1,r+1]|}(1-p)^{|\partial_+ \Theta \cap [l-1,r+1]|}\Big(\prod_{U\in \mathscr{U}_{[l,r]}(\omega')\setminus\{U_-,U_+\}}Z(U)\Big)\leq \max\{p,\lambda_{\text{PF}}\}^{3+r-l-\beta_--\beta_+},
   \end{align}
   where we used that $\lambda_{\text{PF}}\geq (1-p)^{2/3}\geq (1-p)$ for all $p\in (0,1)$. Combining \eqref{eq: Quasilocality eq 6} and \eqref{eq: Quasilocality eq 7} in \eqref{eq: Quasilocality eq 2}, results in an upper bound of $C_1:=2(1-p^2)^{-1}\max\{p,\lambda_{\text{PF}}\}^{3+r-l}|a|^n$ for the difference in \eqref{eq: Quasilocality eq 0}.

   \textbf{Case 2.} ($L+1\in U_-$ and $R-1\notin U_+$ and $U_-\neq U_+$) For the case $L+1\notin U_-$ and $R-1\in U_+$, the argument proceeds analogously. If $U_+\neq \emptyset$, one can follow the same reasoning as in the first case and note that $\alpha_+=\Tilde{\alpha}_+$, which yields
   \begin{equation*}
        \big|f(\alpha_-,\beta_-)f(\alpha_+,\beta_+)-f(\Tilde{\alpha}_-,\beta_-)f(\Tilde{\alpha}_+,\beta_+)\big|=f(\alpha_+,\beta_+)\big|f(\alpha_-,\beta_-)-f(\Tilde{\alpha}_-,\beta_-)\big|
   \end{equation*}
   for the difference in \eqref{eq: Quasilocality eq 2}. If $U_+=\emptyset$, the partition function must be treated differently. This case also covers singletons, i.e., when $l=r$. Note that $I\setminus [l,r]=[\min U_-,l)\cup \{r+1\}$ and the indicator can be decomposed as follows
   \begin{align*}
       \mathds{1}_{T_{I \setminus [l,r]}(\hat{\omega}_{I\setminus (l,r)}\omega'_{I^c\cup (l,r)})=\omega'_{I \setminus [l,r]}}&=\mathds{1}_{T_{[\min U_-,l)}(\hat{\omega}_{[\min U_-,l]}\omega'_{[\min U_-,l]^c})=0'_{[\min U_-,l)}}\\ &\times\mathds{1}_{T_{\{r+1\}}(\hat{\omega}_{\{r,r+1\}}\omega'_{\{r,r+1\}^c})=\omega'_{\{r+1\}}}.
   \end{align*}
   The fact that $U_+=\emptyset$ implies that the spin of the configuration at $r+1$ must be fixed independently of the spin at $r$. Consequently, the equation $\mathds{1}_{T_{\{r+1\}}(\hat{\omega}_{\{r,r+1\}}\omega'_{\{r,r+1\}^c})=\omega'_{\{r+1\}}}=\mathds{1}_{\hat{\omega}_{\{r+1\}}=\omega'_{\{r+1\}}}$ holds independently of the choice of $\hat{\omega}_{\{r\}}\in \{0,1\}$. Therefore, the partition function takes the form 
    \begin{equation*}
       Z_{[l,r]}(\omega'_{[l,r]^c})=\alpha(\omega'_{\{r+1\}})(1-p)^{-1/2}\sum_{x \in \{0,1\}} \sqrt{\alpha(x)}Q^{|[\min U_-,l)|+1}(0,x).
   \end{equation*}
   This leads to the expression
   \begin{align}\label{eq: Quasilocality eq 8}
       p^{|\Theta\cap[l-1,r]|}(1-p)^{|\partial_+ \Theta \cap [l-1,r]|}\Big(\prod_{U\in \mathscr{U}_{[l,r]}(\omega')\setminus\{U_-\}}Z(U)\Big)\big|f(\alpha_-,\beta_-)-f(\Tilde{\alpha}_-,\beta_-)\big|
   \end{align}
   for the difference in \eqref{eq: Quasilocality eq 0}. Each unfixed area $U\in \mathscr{U}_{[l,r]}(\omega')\setminus\{U_-\}$ is surrounded by at least one unoccupied site in $\partial_+ \Theta \cap [l-1,r]$ and consequently
   \begin{equation*}
       (1-p)Z(U)\leq \max\{\sqrt{1-p},\lambda_{\text{PF}}\}^{|U|+1}= \lambda_{\text{PF}}^{|U|+1}
   \end{equation*}
   with the ideas derived in Case 1. In combination with the result in \eqref{eq: Quasilocality eq 5}, we obtain $C_2:=|a|^n\max\{p,\lambda_{\text{PF}}\}^{3+r-l}\frac{1}{1-p}$ as an upper bound for \eqref{eq: Quasilocality eq 8}.

    \textbf{Case 3.} ($U_-=U_+\neq \emptyset$) Let us start to discuss the case where $L+1\in U_-$ and $R-1 \notin U_+$. The partition function takes the form as in Case 1. The influencing sets only consist of the outer unfixed area, i.e., $I_{[l,r]}(\omega')=U_-$ and $I_{[l,r]}(\eta')=\Tilde{U}_-$. From the fact that $\alpha_+=\Tilde{\alpha}_+$, the difference in \eqref{eq: Quasilocality eq 0} is equal to
    \begin{equation}\label{eq: Quasilocality eq 9}
       g(\alpha_+)\cdot\big|f(\alpha_-,\beta_-+\alpha_+)-f(\Tilde{\alpha}_-,\beta_-+\alpha_+)\big|
    \end{equation}
    where
    \begin{equation*}
        g(i):=\frac{\sqrt{1-p}}{\sum_{y\in \{0,1\}} \sqrt{\alpha(y)} Q^{i+2}(y,0)}
    \end{equation*}
    for all $i\in \N_0$. Note that 
    \begin{equation}\label{eq: Quasilocality eq 10}
        g(i)=\frac{\sqrt{(1-p)(3p+1)}}{\lambda_{\text{PF}}^{i+2}\big((\lambda_{\text{PF}}+p)-a^{i+2}(\lambda_r+p)\big)}\leq\lambda_{\text{PF}}^{-i-2}
    \end{equation}
    where we used that $\lambda_{\text{PF}}-\lambda_r=\sqrt{(1-p)(3p+1)}$. Combining the bounds in \eqref{eq: Quasilocality eq 5} and \eqref{eq: Quasilocality eq 10}, leads to $C_3:=|a|^n\lambda_{\text{PF}}^{r-l+2}(1-p)^{-1}$ as an upper bound for \eqref{eq: Quasilocality eq 9}. Here, we used the fact that $\beta_-=r-l+3$. 
    
    In the situation $L+1\in U_-$ and $R-1 \in U_+$, we possibly have $\alpha_+\neq\Tilde{\alpha}_+$. Again, the partition function takes the form as in Case 1 and the influencing sets are equal to the outer unfixed area. Therefore, we have to compute the difference $|h(\alpha_-,\alpha_+)-h(\Tilde{\alpha}_-,\Tilde{\alpha}_+)|$ where
    \begin{equation*}
        h(i,j):=\frac{Q^{i+j+\beta+3}(0,0)}{\sum_{x\in \{0,1\}} \sqrt{\alpha(x)} Q^{i+2}(
    0,x)\sum_{y\in \{0,1\}} \sqrt{\alpha(y)} Q^{j+2}(y,0)}
    \end{equation*}
    for all $i,j\in \N_0$ and $\beta:=r-l$. Inserting the entries of $Q^m$, displayed in \eqref{eq: N-th power Q}, in the expression $|h(i,j)-h(\Tilde{i},\Tilde{j})|$, leads to 
    \begin{align}\label{eq: Quasilocality eq 11}
        \nonumber&\lambda_{\text{PF}}^{\beta-1}\sqrt{3p+1}|a|^2\cdot\Big|p^2\lambda_{\text{PF}}(a^i-a^{\Tilde{i}}+a^j-a^{\Tilde{j}})+p^2\lambda_ra^{3+\beta}\big(a^{i+\tilde{i}}(a^j-a^{\Tilde{j}})+a^{j+\Tilde{j}}(a^i-a^{\Tilde{i}})\big)\\
        \nonumber&+\big(\lambda_{\text{PF}}(\lambda_{\text{r}}+p)^2a^2+\lambda_{\text{r}}(\lambda_{\text{PF}}+p)^2a^{1+\beta}\big)(a^{\Tilde{i}+\Tilde{j}}-a^{i+j})\Big|\Big/\Big(\big(\lambda_{\text{PF}}+p-a^{i+2}(\lambda_{\text{r}}+p)\big)\\
        &\times\big(\lambda_{\text{PF}}+p-a^{j+2}(\lambda_{\text{r}}+p)\big)\big(\lambda_{\text{PF}}+p-a^{\Tilde{i}+2}(\lambda_{\text{r}}+p)\big)\big(\lambda_{\text{PF}}+p-a^{\Tilde{j}+2}(\lambda_{\text{r}}+p)\big)\Big).
    \end{align}
    Here, we used the relation $(\lambda_{\text{PF}}+p)(\lambda_{\text{r}}+p)=p^2$. The denominator can be bounded from below by $(1-p)^2(3p+1)^2$. Using $a\in (-1,0)$ and $\max_{\substack{k,\Tilde{k}\in \N_{\geq n-2}}}|a^k-a^{\Tilde{k}}|=|a|^{n-2}(1-a)$, leads to
    \begin{equation*}
        |a|^n\lambda_{\text{PF}}^{\beta-1}(1-a)\sqrt{3p+1}\Big(2p^2(\lambda_{\text{PF}}+|\lambda_r|)+\lambda_{\text{PF}}(\lambda_{\text{r}}+p)^2+|\lambda_{\text{r}}|(\lambda_{\text{PF}}+p)^2\Big)
    \end{equation*}
    as an upper bound for the numerator of \eqref{eq: Quasilocality eq 11}. Note that $\lambda_{\text{PF}}+|\lambda_r|=\sqrt{(1-p)(3p+1)}$ and $\lambda_{\text{PF}}(\lambda_{\text{r}}+p)^2+|\lambda_{\text{r}}|(\lambda_{\text{PF}}+p)^2=p\sqrt{(1-p)(3p+1)}$ for all $p\in (0,1)$. Consequently, the numerator of \eqref{eq: Quasilocality eq 11} is upper bounded by $24|a|^n\lambda_{\text{PF}}^{\beta-1}p\sqrt{(1-p)}$. Combining both bounds, results in 
    \begin{equation*}
       |h(\alpha_-,\alpha_+)-h(\Tilde{\alpha}_-,\Tilde{\alpha}_+)|\leq |a|^n\max\{p,\lambda_{\text{PF}}\}^{\beta+1}\frac{24}{(1-p)^{2}}=:C_4.
    \end{equation*}

Comparing the bounds $C_i$ for $i\in \{1,2,3,4\}$, we summarize that \eqref{eq: Quasilocality eq 0} is upper bounded by $C_+(l,r)=|a|^n (b_+)^{r-l+1}A_+$ where 
\begin{equation}\label{eq: Constants b_+ and A_+}
    b_+=b_+(p):=\max\{p,\lambda_{\text{PF}}\}~~\text{and}~~A_+=A_+(p):=\frac{24}{(1-p)^2}.
\end{equation}
   \end{proof}

    \subsection{Proposition \ref{thm: Uniqueness of the Gibbs measure}: Finite energy on non-isolation subspace and uniqueness}\label{Subsec: Uniqueness of Gibbs measure}

The fact that $\mathscr{G}(\gamma'_p)=\{\mu'_p\}$ follows directly from \eqref{Ineq: finite energy cond} since this is the "finite-energy" criterion of Proposition 8.38 in \cite{Ge11}. Note that Proposition 8.38 is given in the context of specifications on product spaces. However, it still holds on the smaller measurable space $(\Omega',\mathscr{F}')$ which we explain in \hyperref[Sec: Appendix B]{Appendix B}.

\begin{proof}[Proof of Proposition \ref{thm: Uniqueness of the Gibbs measure}]
Without loss of generality, let $m\in \N$ be such that $A \in \mathscr{F}'_{[-m,m]}$. For this event, we then choose $\Lambda:=[-m-4,m+4]$, see Figure \ref{fig: Explanation Uniqueness Gibbs measures} for an illustration in the case of $m=5$.

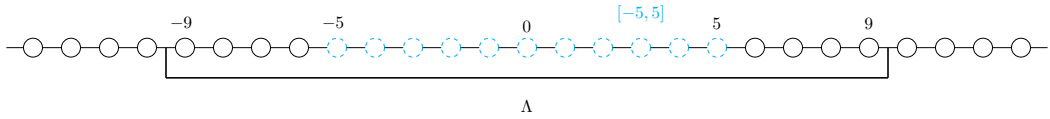
\begin{figure}[ht!]
\centering
\scalebox{0.5}{
\begin{tikzpicture}[every label/.append style={scale=1.3}]
    \node[shape=circle,draw=cyan,dashed,   minimum size=0.5cm] (-B) at (-1,0) {};
    \node[shape=circle,draw=cyan,dashed,   minimum size=0.5cm] (-C) at (-2,0) {};
    \node[shape=circle,draw=cyan,dashed,   minimum size=0.5cm] (-D) at (-3,0) {};
    \node[shape=circle,draw=cyan,dashed,  minimum size=0.5cm] (-E) at (-4,0) {};
    \node[shape=circle,draw=cyan,dashed,   minimum size=0.5cm] (-F) at (-5,0) {};
    \node[shape=circle,draw=black,  minimum size=0.5cm] (-G) at (-6,0) {}; \node[shape=circle,draw=black, minimum size=0.5cm] (-H) at (-7,0) {};
    \node[shape=circle,draw=black,  minimum size=0.5cm] (-I) at (-8,0) {};
    \node[shape=circle,draw=black,  minimum size=0.5cm] (-J) at (-9,0) {};
    \node[shape=circle,draw=black,  minimum size=0.5cm] (-K) at (-10,0) {};
    \node[shape=circle,draw=black,  minimum size=0.5cm] (-L) at (-11,0) {};
    \node[shape=circle,draw=black,  minimum size=0.5cm] (-M) at (-12,0) {};
    \node[shape=circle,draw=black,  minimum size=0.5cm] (-O) at (-13,0) {};
    \node[shape=circle,draw=cyan,dashed, minimum size=0.5cm, label=above:{$0$}] (A) at (0,0) {};
    \node[shape=circle,draw=cyan,dashed,   minimum size=0.5cm] (B) at (1,0) {};
    \node[shape=circle,draw=cyan,dashed, minimum size=0.5cm] (C) at (2,0) {};
    \node[shape=circle,draw=cyan,dashed,   minimum size=0.5cm] (D) at (3,0) {};
    \node[shape=circle,draw=cyan,dashed,   minimum size=0.5cm] (E) at (4,0) {};
    \node[shape=circle,draw=cyan,dashed,   minimum size=0.5cm] (F) at (5,0) {};
    \node[shape=circle,draw=black, minimum size=0.5cm] (G) at (6,0) {};
    \node[shape=circle,draw=black,  minimum size=0.5cm] (H) at (7,0) {};
    \node[shape=circle,draw=black, minimum size=0.5cm] (I) at (8,0) {};
    \node[shape=circle,draw=black,  minimum size=0.5cm] (J) at (9,0) {};
    \node[shape=circle,draw=black,  minimum size=0.5cm] (K) at (10,0) {};
    \node[shape=circle,draw=black,  minimum size=0.5cm] (L) at (11,0) {};
    \node[shape=circle,draw=black, minimum size=0.5cm] (M) at (12,0) {};
    \node[shape=circle,draw=black, minimum size=0.5cm] (O) at (13,0) {};
     \node[minimum size=0.5, label=:{$-5$}] () at (-5.1,0.2) {};
     \node[minimum size=0.5, label=:{$5$}] () at (5,0.2) {};
     \node[minimum size=0.5, label=:{$9$}] () at (9,0.2) {};
     \node[minimum size=0.5, label=:{$-9$}] () at (-9.1,0.2) {};
     \node[minimum size=0.5, label=:{{\color{cyan}$[-5,5]$}}] () at (3,0.4) {};
     \node[minimum size=0.5, label=:{$\Lambda$}] () at (0,-2) {};

      \draw[-, black, very thick] (-9.5,-0.8) to (9.5,-0.8)  node [text width=2.5cm,midway,above=0.5cm, align=center] {};
    \draw[-, black, very thick] (-9.5,-0.8) to (-9.5,0)  node [text width=2.5cm,midway,above=0.5cm, align=center] {};
    \draw[-, black, very thick] (9.5,-0.8) to (9.5,0)  node [text width=2.5cm,midway,above=0.5cm, align=center] {};

    \draw[-] (-O)--(-13.7,0){};
    \draw [-] (-O) -- (-M);
     \draw [-] (-L) -- (-M);
     \draw [-] (-L) -- (-K);
    \draw [-] (-J) -- (-K);
    \draw [-] (-I) -- (-J);
    \draw [-] (-H) -- (-I);
    \draw [-] (-G) -- (-H);
    \draw [-] (-F) -- (-G);
    \draw [-] (-F) -- (-E);
    \draw [-] (-E) -- (-D);
    \draw [-] (-D) -- (-C);
    \draw [-] (-C) -- (-B);
    \draw [-] (-B) -- (A);
    \draw [-] (A) -- (B);
    \draw[-](O)--(13.7,0){};
    \draw [-] (M) -- (O);
    \draw [-] (M) -- (L);
    \draw [-] (K) -- (L);
    \draw [-] (K) -- (J);
    \draw [-] (J) -- (I);
    \draw [-] (H) -- (I);
    \draw [-] (G) -- (H);
    \draw [-] (G) -- (F);
    \draw [-] (F) -- (E);
    \draw [-] (E) -- (D);
    \draw [-] (D) -- (C);
    \draw [-] (C) -- (B);

\end{tikzpicture}
}
\small \caption{The sites of the dependence set $[-5,5]$ on the line $\Z$ are coloured and dashed in blue and the chosen set $\Lambda=[-9,9]$ is marked with a squared bracket.}
\label{fig: Explanation Uniqueness Gibbs measures}
\end{figure}

Instead of verifying \eqref{Ineq: finite energy cond}, we will show that 
\begin{align}\label{Ineq: finite energy cond rewritten}
    \frac{\gamma'_{p,\Lambda}
    (A|\zeta')}{\gamma'_{p,\Lambda}
    (
    \omega'_{[-m,m]}|\zeta')}\frac{\gamma'_{p,\Lambda}
    (
    \omega'_{[-m,m]}|\eta')}{\gamma'_{p,\Lambda}
    (A|\eta')}\geq C
\end{align}
for all $\omega'_{[-m,m]}\in \Omega'_{[-m,m]}$. This implies \eqref{Ineq: finite energy cond} after multiplying the denominators on both sides and sum up over $\omega'_{[-m,m]}$. The fractions in \eqref{Ineq: finite energy cond rewritten} have the advantage that we do not have to deal with the partition functions. First of all, \eqref{Ineq: finite energy cond rewritten} has the form 
\begin{align*}
    \frac{\sum_{\Tilde{\omega}'\in A}\gamma'_{p,\Lambda}
    (\Tilde{\omega}'_{[-m,m]}|\zeta')}{\gamma'_{p,\Lambda}
    (
    \omega'_{[-m,m]}|\zeta')}\frac{\gamma'_{p,\Lambda}
    (
    \omega'_{[-m,m]}|\eta')}{\sum_{\Tilde{\omega}'\in A}\gamma'_{p,\Lambda}
    (\Tilde{\omega}'_{[-m,m]}|\eta')}\geq C.
\end{align*}
Consider the fraction 
\begin{align*}
     \frac{\gamma'_{p,\Lambda}
    (\Tilde{\omega}'_{[-m,m]}|\zeta')}{\gamma'_{p,\Lambda}
    (
    \omega'_{[-m,m]}|\zeta')}=\frac{\sum_{\hat{\omega}'_{\Lambda \setminus [-m,m]}}\gamma'_{p,\Lambda}
    (\hat{\omega}'_{\Lambda \setminus [-m,m]}\Tilde{\omega}'_{[-m,m]}|\zeta')}{\sum_{\hat{\omega}'_{\Lambda \setminus [-m,m]}}\gamma'_{p,\Lambda}
    (\hat{\omega}'_{\Lambda \setminus [-m,m]}
    \omega'_{[-m,m]}|\zeta')}
\end{align*}
for an $\Tilde{\omega}'\in A$. Due to the idea in the proof of Theorem \ref{thm: Existence image specification}, the above expression can be written as
\begin{align}\label{eq: Fraction Uniqueness}
     \frac{\sum_{\hat{\omega}'_{\Lambda \setminus [-m,m]}}\sum_{\Tilde{\omega}_{
     I}}\mu_p(\Tilde{\omega}_{I})\mathds{1}_{T_{I}(\Tilde{\omega}_{I}\zeta'_{\partial_+ I})=\hat{\omega}'_{\Lambda \setminus [-m,m]}\Tilde{\omega}'_{[-m,m]}\zeta'_{I\setminus \Lambda}}}{\sum_{\hat{\omega}'_{\Lambda \setminus [-m,m]}}\sum_{\omega_{
     I}}\mu_p(\omega_{I})\mathds{1}_{T_{I}(\omega_{I}\zeta'_{\partial_+ I})=\hat{\omega}'_{\Lambda \setminus [-m,m]}\omega'_{[-m,m]}\zeta'_{I\setminus \Lambda}}}.
\end{align}
 where $I$ is the influencing set of $\Lambda$ with the boundary condition $\zeta'_{\Lambda^c}$, see \eqref{eq: Influence set}. A lower bound of the numerator and an upper bound of the denominator can be obtained by taking a look at the sites between $[-m,m]$ and $\Lambda^c$, compare Figure \ref{fig: Explanation Uniqueness Gibbs measures}. For the first one, we need to lower bound every term with zero in the sum over $\hat{\omega}'_{\Lambda \setminus [-m,m]}$ except those ones with  $\hat{\omega}'_{\{-m-3,-m-2,m+2,m+3\}}=1_{\{-m-3,-m-2,m+2,m+3\}}$. Note that we can split the indicator as follows  
\begin{align}\label{eq: Splitted indicator uniqueness}
    \nonumber\mathds{1}_{T_{I}(\Tilde{\omega}_{I}\zeta'_{\partial_+ I})=\hat{\omega}'_{\Lambda \setminus [-m,m]}\Tilde{\omega}'_{[-m,m]}\zeta'_{I\setminus \Lambda}}&=\mathds{1}_{T_{I\setminus \Lambda}(\Tilde{\omega}_{I\setminus \mathring{\Lambda}}\zeta'_{\partial_+ I})=\zeta'_{I\setminus \Lambda}}\mathds{1}_{T_{\Lambda\setminus[-m,m]}(\Tilde{\omega}_{\Bar{\Lambda}\setminus (-m,m)})=\hat{\omega}'_{\Lambda \setminus [-m,m]}}\\
    &\times\mathds{1}_{T_{[-m,m]}(\Tilde{\omega}_{[-m-1,m+1]})=\Tilde{\omega}'_{[-m,m]}}
\end{align}
and hence the second indicator together with the above assumption implies that also\\ $\Tilde{\omega}_{\{-m-3,-m-2,m+2,m+3\}}=1_{\{-m-3,-m-2,m+2,m+3\}}$. Note that this results in a prefactor of $p^4$ in the final lower bound stated in \eqref{eq: Lower bound uniqueness}. Therefore, the second indicator of \eqref{eq: Splitted indicator uniqueness} can be separated into the following four indicators:
\begin{align*}
    &\mathds{1}_{T_{\{-m-4\}}(\Tilde{\omega}_{\{-m-5,-m-4\}}1_{-m-3})=\hat{\omega}'_{-m-4}}\mathds{1}_{T_{\{-m-1\}}(1_{-m-2}\Tilde{\omega}_{\{-m-1,-m\}})=\hat{\omega}'_{-m-1}}\\
    &\times\mathds{1}_{T_{\{m+1\}}(\Tilde{\omega}_{\{m,m+1\}}1_{m+2})=\hat{\omega}'_{m+1}}\mathds{1}_{T_{\{m+4\}}(1_{m+3}\Tilde{\omega}_{\{m+4,m+5\}})=\hat{\omega}'_{m+4}}.
\end{align*}
Recall that we sum over $\hat{\omega}'_{\{-m-4,-m-1,m+1,m+4\}}$ in the numerator of \eqref{eq: Fraction Uniqueness} and hence 
\begin{align*}
    &\sum_{\hat{\omega}'_{\{-m-4,-m-1,m+1,m+4\}}}\sum_{\Tilde{\omega}_{\{-m-4,-m-1,m+1,m+4\}}}\mu_p(\Tilde{\omega}_{\{-m-4,-m-1,m+1,m+4\}})\\
    &\times\mathds{1}_{T_{\{-m-4\}}(\Tilde{\omega}_{\{-m-5,-m-4\}}1_{-m-3})=\hat{\omega}'_{-m-4}}\mathds{1}_{T_{\{-m-1\}}(1_{-m-2}\Tilde{\omega}_{\{-m-1,-m\}})=\hat{\omega}'_{-m-1}}\\
    &\times\mathds{1}_{T_{\{m+1\}}(\Tilde{\omega}_{\{m,m+1\}}1_{m+2})=\hat{\omega}'_{m+1}}\mathds{1}_{T_{\{m+4\}}(1_{m+3}\Tilde{\omega}_{\{m+4,m+5\}})=\hat{\omega}'_{m+4}}=1.
\end{align*}
To conclude, we have lower bounded the numerator of \eqref{eq: Fraction Uniqueness} by 
\begin{align}\label{eq: Lower bound uniqueness}
    \nonumber&p^4\sum_{\Tilde{\omega}_{I\setminus \mathring{\Lambda}}}\mu_p(\Tilde{\omega}_{I\setminus \mathring{\Lambda}})\mathds{1}_{T_{I\setminus \Lambda}(\Tilde{\omega}_{I\setminus \mathring{\Lambda}}\zeta'_{\partial_+ I})=\zeta'_{I\setminus \Lambda}} \\
    &\times\sum_{\Tilde{\omega}_{[-m-1,m+1]}}\mu_p(\Tilde{\omega}_{[-m-1,m+1]})\mathds{1}_{T_{[-m,m]}(\Tilde{\omega}_{[-m-1,m+1]})=\Tilde{\omega}'_{[-m,m]}}.
\end{align}

We proceed by establishing an upper bound for the denominator in \eqref{eq: Fraction Uniqueness}. For this purpose, note that $\mathds{1}_{T_{\Lambda\setminus[-m,m]}(\omega_{\Bar{\Lambda}\setminus (-m,m)})=\hat{\omega}'_{\Lambda \setminus [-m,m]}} \leq 1$ which we can apply to \eqref{eq: Splitted indicator uniqueness}. Consequently, the denominator does not depend on the sum over $\hat{\omega}'_{\Lambda \setminus [-m,m]}$ and we get a combinatorial factor of 256 in the final upper bound in \eqref{eq: Upper bound uniqueness}. Furthermore, we have 
\begin{align*}
    \sum_{\omega_{\{-m-3,-m-2,m+2,m+3\}}}\mu_p(\omega_{\{-m-3,-m-2,m+2,m+3\}})=1.
\end{align*}
This leads to the following upper bound 
\begin{align}\label{eq: Upper bound uniqueness}
    \nonumber&256\sum_{\omega_{C\setminus \mathring{\Lambda}}}\mu_p(\omega_{C\setminus \mathring{\Lambda}})\mathds{1}_{T_{C\setminus \Lambda}(\omega_{C\setminus \mathring{\Lambda}}\zeta'_{\partial_+ C})=\zeta'_{C\setminus \Lambda}} \\
    &\times\sum_{\omega_{[-m-1,m+1]}}\mu_p(\omega_{[-m-1,m+1]})\mathds{1}_{T_{[-m,m]}(\omega_{[-m-1,m+1]})=\omega'_{[-m,m]}}
\end{align}
of the denominator in \eqref{eq: Fraction Uniqueness}. Combining \eqref{eq: Lower bound uniqueness} and \eqref{eq: Upper bound uniqueness} in \eqref{eq: Fraction Uniqueness} leads to the following lower bound
\begin{align*}
    \frac{p^4\sum_{\Tilde{\omega}_{[-m-1,m+1]}}\mu_p(\Tilde{\omega}_{[-m-1,m+1]})\mathds{1}_{T_{[-m,m]}(\Tilde{\omega}_{[-m-1,m+1]})=\Tilde{\omega}'_{[-m,m]}}}{256\sum_{\omega_{[-m-1,m+1]}}\mu_p(\omega_{[-m-1,m+1]})\mathds{1}_{T_{[-m,m]}(\omega_{[-m-1,m+1]})=\omega'_{[-m,m]}}}.
\end{align*}
Switching numerator and denominator in the above computation leads to the following upper bound of \eqref{eq: Fraction Uniqueness}
\begin{align*}
    \frac{256\sum_{\Tilde{\omega}_{[-m-1,m+1]}}\mu_p(\Tilde{\omega}_{[-m-1,m+1]})\mathds{1}_{T_{[-m,m]}(\Tilde{\omega}_{[-m-1,m+1]})=\Tilde{\omega}'_{[-m,m]}}}{p^4\sum_{\omega_{[-m-1,m+1]}}\mu_p(\omega_{[-m-1,m+1]})\mathds{1}_{T_{[-m,m]}(\omega_{[-m-1,m+1]})=\omega'_{[-m,m]}}}.
\end{align*}
Note that the upper and lower bound do not depend on the given boundary condition outside $\Lambda$. Consequently, the constant in \eqref{Ineq: finite energy cond rewritten} can be chosen as $C:=2^{-16}\cdot p^8$.
\end{proof}

\begin{remark}
    Note that Proposition 8.38 in \cite{Ge11} allowed us to choose for each cylinder event $A$ an appropriate $\Lambda \Subset \Z$ such that the inequality \eqref{Ineq: finite energy cond} is valid. This subtlety was really needed in the proof of Proposition \ref{thm: Uniqueness of the Gibbs measure}.
\end{remark}

\section{Proofs for one-sided view and for GHoC view} \label{Sec: Proofs one-sided view}

\subsection{Theorem \ref{thm: g-function for mu'_p}: Computation of the g-function for $\mu_p'$}\label{Sec: Image measure is g-measure}

\begin{proof}[Proof of Theorem \ref{thm: g-function for mu'_p}]
We first employ the idea of Theorem 4.19 in \cite{FeMa04}, which provides a method to obtain an LIS $\rho_p$ from the specification $\gamma'_p$. Define 
\begin{equation}\label{eq: Idea construction g-function}
    \rho_{p,\{0\}}(\sigma_0=x|\omega'):=\lim_{k\rightarrow \infty} \gamma'_{p,[0,k]}(\sigma_0=x|\omega')
\end{equation}
where $x\in \{0,1\}$ and $\omega'\in \Omega'$ with $\omega'_{\{0\}}=x$. Provided that this limit exists independently of the choice of $\mu_p'$-almost all $\omega' \in \Omega'$, one can conclude that $\mu_p'$ is consistent with the single-site kernels, as the identity 
\begin{equation*}
    (\mu'_p~ \rho_{p,\{0\}})(h)=\int_{\Omega'} \rho_{p,\{0\}}(h|\omega') \mu'_p(d\omega')=\lim_{k\rightarrow \infty}\int_{\Omega'} \gamma'_{p,[0,k]}(h|\omega') \mu'_p(d\omega')=\mu'_p(h)
\end{equation*}
holds for all $\mathscr{F}'_{\leq 0}$-measurable functions $h$. Here, we applied the dominated convergence theorem together with the consistency of $\mu'_p$ with the specification $\gamma'_{p,\Lambda}$ for all $\Lambda \Subset \Z$. Observe that, by the translational invariance of the model, $\rho_{p,\{i\}}$ is independent of $i$, and in particular $=\rho_{p,\{0\}}=\rho_{p,\{i\}}$ for all $i\in \Z$. Consequently, the kernel in \eqref{eq: Idea construction g-function} determines a whole LIS $\rho_p$ which admits $\mu_p'$ as a consistent measure, see Theorem 3.2 in \cite{FeMa04} or Theorem 3.1 in \cite{FeMa05}.

    Assume first that $\omega'\in \Omega'$ and $\omega'_{\{0\}}=x_{\{0\}}$. We compute the fraction 
    \begin{equation}\label{eq: g-function fraction}
        \frac{\gamma'_{p,[0,k]}(\sigma_0=1|\omega')}{\gamma'_{p,[0,k]}(\sigma_0=0|\omega')}=\frac{\sum_{\Tilde{\omega}'_{[1,k]}}\gamma'_{p,[0,k]}(1_{\{0\}}\Tilde{\omega}'_{[1,k]}|\omega')}{\sum_{\hat{\omega}'_{[1,k]}}\gamma'_{p,[0,k]}(0_{\{0\}}\hat{\omega}'_{[1,k]}|\omega')}
    \end{equation}
    for all $k\in \N$, which has the advantage that the partition function of $\gamma'_{p,[0,k]}(\cdot|\omega')$ does not have to be treated. If the limit $k\rightarrow \infty$ of \eqref{eq: g-function fraction} exists, then it is equal to $\frac{1-g_p(n)}{g_p(n)}$. Rearranging this identity yields $g_p(n)=\big(1+\frac{1-g_p(n)}{g_p(n)}\big)^{-1}$. We will discuss the cases $n=1$, $n=2$ and $n=\infty$ separately. 
    
    \textbf{Case 1.} ($n=1$) In this case, the spins of $T^{-1}(\omega')$ are fixed at the vertices $[-2,0]$. Let 
    \begin{equation}\label{eq: definition m g-function}
        m:=m(k):=\min\big\{i\in \N_{>k}:~(\omega'_{\{i\}},\omega'_{\{i+1\}})=(1,1)\big\}
    \end{equation}
    denote the index of the first pair of adjacent occupied sites strictly after $k$. We set $m(k):=k$ if $(\omega'_{\{k+1\}},\omega'_{\{k+2\}})=(1,0)$ and note that the set in \eqref{eq: definition m g-function} is not empty for $\mu_p'$-almost every $\omega'\in \Omega'$. Furthermore, we assume that $k+1<m(k)<\infty$ for all $k\in \N$. Therefore, we can rewrite \eqref{eq: g-function fraction} with Theorem \ref{thm: Existence image specification} as follows 
    \begin{equation*}
       \frac{pZ(U_0)+p^2Z(U_1)+p^3\sum_{l=2}^{k}Z(U_l)}{Z(U_{-1})+(1-p)p^2\sum_{l=2}^{k}Z(U_l)}.
    \end{equation*}
    Here, we sum over all possible outer unfixed areas $U_l:=(l+1,m-1)$ with $-2\leq l\leq k$ that could influence the spins in $[0,k]$, compare Definition \ref{def: restricted unfixed areas} and Figure \ref{fig: Explanation proof g-function 1} for an illustration of these areas. The prefactors of the partition functions correspond to the remaining weights at the boundary of the unfixed regions, where the spins are fixed. Furthermore, note that we sum over $\omega'_{[1,l-2]}\in \Omega'_{[1,l-2]}$ outside of each unfixed area $(l+1,m-1)$ and hence the remaining weights of \eqref{eq: g-function fraction} equals one due to the fact that 
    \begin{equation}\label{eq: probability measure fixed Bc.}
        \sum_{ \omega'_\Lambda}\sum_{\Tilde{\omega}_\Lambda}\mu_p(\Tilde{\omega}_\Lambda)\mathds{1}_{T_\Lambda(\Tilde{\omega}_\Lambda\omega_{\Lambda^c})=\omega'_{\Lambda}}=1
    \end{equation}
    for all connected $\Lambda \Subset \Z$, all $\omega'\in \Omega'$ and all fixed first-layer boundary conditions $\omega\in T^{-1}(\omega')$. 
    Substituting the definition $Z(U)=(1-p)^{-1}Q^{|U|+1}(0,0)$ together with the explicit form of the entries of $Q^m$, given in \eqref{eq: N-th power Q}, leads to
    \begin{equation*}
        \frac{p\lambda_{\text{PF}}^{m-2}\big(\lambda_{\text{PF}}-a^{m-2}\lambda_r\big)+p^2\lambda_{\text{PF}}^{m-3}\big(\lambda_{\text{PF}}-a^{m-3}\lambda_r\big)+p^3\sum^{k}_{l=2}\lambda_{\text{PF}}^{m-l-2}\big(\lambda_{\text{PF}}-a^{m-l-2}\lambda_r\big)}{\lambda_{\text{PF}}^{m-1}\big(\lambda_{\text{PF}}-a^{m-1}\lambda_r\big)+(1-p)p^2  \sum_{l=2}^{k}\lambda_{\text{PF}}^{m-l-2}\big(\lambda_{\text{PF}}-a^{m-l-2}\lambda_r\big)}.
    \end{equation*}

\begin{figure}[ht!]
\centering
\scalebox{0.5}{
\begin{tikzpicture}[every label/.append style={scale=1.3},fill fraction/.style={path picture={
\fill[#1] 
(path picture bounding box.south) rectangle
(path picture bounding box.north west);
}},
fill fraction/.default=gray!50]
\node[shape=circle,draw=orange, fill=orange, minimum size=0.5cm] (-B) at (7,0) {};
    \node[shape=circle,draw=black, minimum size=0.5cm] (-C) at (6,0) {};
    \node[shape=circle,draw=black, minimum size=0.5cm] (-D) at (5,0) {};
    \node[shape=circle,draw=black,  minimum size=0.5cm] (-E) at (4,0) {};
    \node[shape=circle,draw=black, fill=black,  minimum size=0.5cm,label=above:{$0$}] (-F) at (3,0) {};
    \node[shape=circle,draw=black,fill=black,  minimum size=0.5cm] (-G) at (2,0) {}; 
    \node[shape=circle,draw=black, fill=black, minimum size=0.5cm] (-H) at (1,0) {};
    \node[shape=circle,draw=orange, fill=orange, minimum size=0.5cm,label=above:{$l$}] (A) at (8,0) {};
    \node[shape=circle,draw=black, minimum size=0.5cm] (B) at (9,0) {};
    \node[shape=circle,draw=black, fill fraction=black, minimum size=0.5cm] (C) at (10,0) {};
    \node[shape=circle,draw=black,fill fraction=black, minimum size=0.5cm,label=above:{$k$}] (D) at (11,0) {};
    \node[shape=circle,draw=black,fill fraction=black, minimum size=0.5cm] (E) at (12,0) {};
    \node[shape=circle,draw=black,fill fraction=black, minimum size=0.5cm] (F) at (13,0) {};
    \node[shape=circle,draw=black, minimum size=0.5cm] (G) at (14,0) {};
    \node[shape=circle,draw=black, fill=black, minimum size=0.5cm,label=above:{$m$}] (H) at (15,0) {};
    \node[shape=circle,draw=black, fill=black, minimum size=0.5cm] (I) at (16,0) {};

    \node[minimum size=0.5, label=:{\text{I}}] () at (-0.5,-0.5) {};

    \draw[-] (-H)--(0.3,0){};
    \draw [-] (-G) -- (-H);
    \draw [-] (-F) -- (-G);
    \draw [-] (-F) -- (-E);
    \draw [-] (-E) -- (-D);
    \draw [-] (-D) -- (-C);
    \draw [-] (-C) -- (-B);
    \draw [-] (-B) -- (A);
    \draw [-] (A) -- (B);
    \draw [-] (B) -- (C);
    \draw [-] (C) -- (D);
    \draw [-] (D) -- (E);
    \draw [-] (E) -- (F);
    \draw [-] (F) -- (G);
    \draw [-] (G) -- (H);
    \draw [-] (H) -- (I);
    \draw[-](I)--(16.7,0){};

    \node[shape=circle,draw=orange, fill=orange, minimum size=0.5cm] (-B) at (7,-2) {};
    \node[shape=circle,draw=black, minimum size=0.5cm] (-C) at (6,-2) {};
    \node[shape=circle,draw=black, minimum size=0.5cm] (-D) at (5,-2) {};
    \node[shape=circle,draw=black,  minimum size=0.5cm] (-E) at (4,-2) {};
    \node[shape=circle,draw=black,  minimum size=0.5cm,label=above:{$0$}] (-F) at (3,-2) {};
    \node[shape=circle,draw=black,fill=black,  minimum size=0.5cm] (-G) at (2,-2) {}; 
    \node[shape=circle,draw=black, fill=black, minimum size=0.5cm] (-H) at (1,-2) {};
    \node[shape=circle,draw=orange, fill=orange, minimum size=0.5cm,label=above:{$l$}] (A) at (8,-2) {};
    \node[shape=circle,draw=black,minimum size=0.5cm] (B) at (9,-2) {};
    \node[shape=circle,draw=black,fill fraction=black,minimum size=0.5cm] (C) at (10,-2) {};
    \node[shape=circle,draw=black,fill fraction=black, minimum size=0.5cm,label=above:{$k$}] (D) at (11,-2) {};
    \node[shape=circle,draw=black,fill fraction=black, minimum size=0.5cm] (E) at (12,-2) {};
    \node[shape=circle,draw=black,fill fraction=black, minimum size=0.5cm] (F) at (13,-2) {};
    \node[shape=circle,draw=black, minimum size=0.5cm] (G) at (14,-2) {};
    \node[shape=circle,draw=black, fill=black, minimum size=0.5cm,label=above:{$m$}] (H) at (15,-2) {};
    \node[shape=circle,draw=black, fill=black, minimum size=0.5cm] (I) at (16,-2) {};

    \node[minimum size=0.5, label=:{\text{II}}] () at (-0.5,-2.5) {};

    \draw[-] (-H)--(0.3,-2){};
    \draw [-] (-G) -- (-H);
    \draw [-] (-F) -- (-G);
    \draw [-] (-F) -- (-E);
    \draw [-] (-E) -- (-D);
    \draw [-] (-D) -- (-C);
    \draw [-] (-C) -- (-B);
    \draw [-] (-B) -- (A);
    \draw [-] (A) -- (B);
    \draw [-] (B) -- (C);
    \draw [-] (C) -- (D);
    \draw [-] (D) -- (E);
    \draw [-] (E) -- (F);
    \draw [-] (F) -- (G);
    \draw [-] (G) -- (H);
    \draw [-] (H) -- (I);
    \draw[-](I)--(16.7,-2){};

\end{tikzpicture}
}
\small \caption{Illustrated are two configurations considered in the proof for the explicit expression for $g_p(1)$. The top (bottom) line shows 
one of the configurations which are summed over in the numerator (denominator) on the r.h.s. of \eqref{eq: g-function fraction}.
In the first configuration, the spin at the origin is occupied, whereas in the second it is unoccupied. In both cases, the maximum of the observation window $k$ and the first adjacent pair of occupied sites after $k$, starting from $m=m(k)$, are indicated. Furthermore, each configuration contains the outer unfixed area $U_l=(l+1,m-1)$, marked by half-black, half-white dots, which follows a pair of fixed occupied sites at $\{l-1,l\}$, highlighted in orange. The proof examines the limit $k\rightarrow \infty$ of the kernels along an arbitrary sequence $m(k)$.
}
\label{fig: Explanation proof g-function 1}
\end{figure}
    
   \noindent However, this expression converges to $\frac{p}{1-p}$ as $k\rightarrow\infty $ since the difference
    \begin{align}\label{eq: Proof g-function 1}
    \nonumber&\Bigg|\frac{\lambda_{\text{PF}}^{m-2}\big(\lambda_{\text{PF}}-a^{m-2}\lambda_r\big)+p\lambda_{\text{PF}}^{m-3}\big(\lambda_{\text{PF}}-a^{m-3}\lambda_r\big)+p^2\sum^{k}_{l=2}\lambda_{\text{PF}}^{m-l-2}\big(\lambda_{\text{PF}}-a^{m-l-2}\lambda_r\big)}{(1-p)^{-1}\lambda_{\text{PF}}^{m-1}\big(\lambda_{\text{PF}}-a^{m-1}\lambda_r\big)+p^2  \sum_{l=2}^{k}\lambda_{\text{PF}}^{m-l-2}\big(\lambda_{\text{PF}}-a^{m-l-2}\lambda_r\big)}-1\Bigg|\\
    &=\frac{\big|\lambda_{\text{PF}}^{m-2}\big(\lambda_{\text{PF}}-a^{m-2}\lambda_r\big)+p\lambda_{\text{PF}}^{m-3}\big(\lambda_{\text{PF}}-a^{m-3}\lambda_r\big)-(1-p)^{-1}\lambda_{\text{PF}}^{m-1}\big(\lambda_{\text{PF}}-a^{m-1}\lambda_r\big)\big|}{(1-p)^{-1}\lambda_{\text{PF}}^{m-1}\big(\lambda_{\text{PF}}-a^{m-1}\lambda_r\big)+p^2  \sum_{l=2}^{k}\lambda_{\text{PF}}^{m-l-2}\big(\lambda_{\text{PF}}-a^{m-l-2}\lambda_r\big)}
    \end{align}
    converges to zero uniformly in the choice of the sequence $m=m(k)$.
    Note that we can rewrite the sum in the denominator of \eqref{eq: Proof g-function 1} as follows 
    \begin{equation*}
        \sum^{k}_{l=2}\lambda_{\text{PF}}^{m-l-2}\big(\lambda_{\text{PF}}-a^{m-l-2}\lambda_r\big)=\lambda_{\text{PF}}^{m-k-1}\frac{1-\lambda_{\text{PF}}^{k-1}}{1-\lambda_{\text{PF}}}-\lambda_{r}^{m-k-1}\frac{1-\lambda_{r}^{k-1}}{1-\lambda_r}.
    \end{equation*}
    Consequently, \eqref{eq: Proof g-function 1} can be written as 
    \begin{equation*}
        \frac{\big|\lambda_{\text{PF}}^{k-1}\big(\lambda_{\text{PF}}-a^{m-2}\lambda_r\big)+p\lambda_{\text{PF}}^{k-2}\big(\lambda_{\text{PF}}-a^{m-3}\lambda_r\big)-(1-p)^{-1}\lambda_{\text{PF}}^{k}\big(\lambda_{\text{PF}}-a^{m-1}\lambda_r\big)\big|}{(1-p)^{-1}\lambda_{\text{PF}}^{k}\big(\lambda_{\text{PF}}-a^{m-1}\lambda_r\big)+p^2\big((1-\lambda_{\text{PF}}^{k-1})\cdot(1-\lambda_{\text{PF}})^{-1}-a^{m-k-1}(1-\lambda_{r}^{k-1})\cdot(1-\lambda_r)^{-1}\big)}
    \end{equation*}
    which converges to zero as $k\rightarrow \infty$ independently of the sequence $m>k+1$. In order to see this, note that the numerator is upper bounded by $3(1-p)^{-1}(\lambda_{\text{PF}}+|\lambda_r|)\lambda_{\text{PF}}^{k-2}$, since $\lambda_{\text{PF}}\in (0,1)$ and $a\in (-1,0)$. The denominator is bounded from below by $p^2\big((1-\lambda_{\text{PF}}^{k-1})\cdot(1-\lambda_{\text{PF}})^{-1}-(1-\lambda_{r}^{k-1})\cdot(1-\lambda_r)^{-1}\big)$, where we used the fact that $|\lambda_r|\leq\lambda_{\text{PF}}$. This lower bound is positive for sufficiently large $k$.  

    If $m=k$, then the configuration satisfies $(\omega'_{k+1},\omega'_{k+2})=(1,0)$, so the spin at $\{k\}$ is fixed. If $m=k+1$, we have $(\omega'_{k+1},\omega'_{k+2})=(1,1)$ and the spin at $\{k+1\}$ is fixed. Furthermore, in the numerator the spin at site $0$ is fixed to be \say{occupied} (giving a weight $p$), whereas in the denominator it is fixed to be \say{empty} (giving a weight $1-p$). Summing over all possible states in the intermediate regions $[1,k-1]$ and $[1,k]$, respectively, we obtain that \eqref{eq: g-function fraction} equals $\frac{p}{(1-p)}$ for all $k\in \N$ and $m\in \{k,k+1\}$, using  \eqref{eq: probability measure fixed Bc.}. Consequently, the limit of \eqref{eq: g-function fraction} as  $k\rightarrow \infty$ exists, independently of $\omega'\in \Omega'$, and we obtain $g_p(1)=1-p$.

      \textbf{Case 2.} ($2<n<\infty$) In this situation, the spins at the vertices $[-n-1,-n+1]$ are fixed and the fraction in \eqref{eq: g-function fraction} becomes
    \begin{equation}\label{eq: g-function fraction 2}
       \frac{\sum_{\Tilde{\omega}'_{[2,k]}}\gamma'_{p,[0,k]}(1_{\{0,1\}}\Tilde{\omega}'_{[2,k]}|\omega')}{\sum_{\hat{\omega}'_{[1,k]}}\gamma'_{p,[0,k]}(0_{\{0\}}\hat{\omega}'_{[1,k]}|\omega')}.
    \end{equation}
    Note that the non-isolation constraint forces $\Tilde{\omega}'_{\{1\}}=1$, because $\omega'_{\{-1\}}=0$. Again, we assume $k+1<m<\infty$, where $m$ is defined in \eqref{eq: definition m g-function}. Following the approach of Case 1, we sum in the numerator over all outer unfixed areas of the form $U_j:=(j+1,m-1)$ with $1\leq j\leq k$, located to the right of $[0,k]$. Additionally, we consider the weights on the outer unfixed area $(-n+1,-1)$ on the left side of the observation window. In the denominator, we consider all outer unfixed areas $V_l:=(-n+1,l-2)$ for $2\leq l\leq k$ and separately the case $(-n+1,m-1)$. For each unfixed area $V_l$ on the left side of $[0,k]$, we also have to discuss the outer unfixed areas $U_j$ for $l\leq j\leq k$, located to the right of $[0,k]$, see Figure \ref{fig: Explanation plot g-function 2} for an illustration. Finally, since we sum over all possible configurations  $\omega'_{[2,j-2]}$ and $\omega'_{[l+1,j-2]}$ in the intermediate regions of the numerator and denominator, respectively, their weights sum to one. Therefore, the expression in \eqref{eq: g-function fraction 2} can be rewritten as follows
    \begin{align}\label{eq: Fraction g-function 3}
    \nonumber&p^2(1-p)Z(V_1)\cdot\Big((1-p)Z(U_1)+p(1-p)Z(U_2)+p^2(1-p)\sum_{j=3}^kZ(U_j)\Big)\Big/\Big(Z(V_{m+1})+\\
    &+p^2(1-p)^2\sum_{l=2}^{k} Z(V_l) Z(U_l)+(1-p)^2p^3\sum_{l=3}^{k}Z(V_{l-1})Z(U_l)+(1-p)^2p^4\sum^{k-2}_{l=2}Z(V_l)\sum^{k}_{j=l+2}Z(U_j)\Big).
    \end{align}
    In order to compute the limit as $k\rightarrow\infty$, let us prove the following lemma.

    \begin{figure}[ht!]
\centering
\scalebox{0.5}{
\begin{tikzpicture}[every label/.append style={scale=1.3},fill fraction/.style={path picture={
\fill[#1] 
(path picture bounding box.south) rectangle
(path picture bounding box.north west);
}},
fill fraction/.default=gray!50]
\node[shape=circle,draw=black,  minimum size=0.5cm] (-B) at (7,0) {};
    \node[shape=circle,draw=black, minimum size=0.5cm] (-C) at (6,0) {};
    \node[shape=circle,draw=black, minimum size=0.5cm] (-D) at (5,0) {};
    \node[shape=circle,draw=black, fill=black,  minimum size=0.5cm] (-E) at (4,0) {};
    \node[shape=circle,draw=black, fill=black,  minimum size=0.5cm,label=above:{$0$}] (-F) at (3,0) {};
    \node[shape=circle,draw=black,  minimum size=0.5cm] (-G) at (2,0) {}; 
    \node[shape=circle,draw=black, fill fraction=black, minimum size=0.5cm] (-H) at (1,0) {};
    \node[shape=circle,draw=black, fill fraction=black, minimum size=0.5cm] (-I) at (0,0) {};
    \node[shape=circle,draw=black, minimum size=0.5cm] (-J) at (-1,0) {};
    \node[shape=circle,draw=black, fill=black, minimum size=0.5cm,label=above:{$-n$}] (-K) at (-2,0) {};
    \node[shape=circle,draw=black, fill=black, minimum size=0.5cm] (-L) at (-3,0) {};
    \node[shape=circle,draw=black,  minimum size=0.5cm] (A) at (8,0) {};
    \node[shape=circle,draw=black, minimum size=0.5cm] (B) at (9,0) {};
    \node[shape=circle,draw=black,  minimum size=0.5cm] (C) at (10,0) {};
    \node[shape=circle,draw=cyan,fill=cyan,  minimum size=0.5cm] (D) at (11,0) {};
    \node[shape=circle,draw=cyan, fill=cyan, minimum size=0.5cm,label=above:{$j$}] (E) at (12,0) {};
    \node[shape=circle,draw=black,  minimum size=0.5cm] (F) at (13,0) {};
    \node[shape=circle,draw=black, fill fraction=black, minimum size=0.5cm,label=above:{$k$}] (G) at (14,0) {};
    \node[shape=circle,draw=black, fill fraction=black,minimum size=0.5cm] (H) at (15,0) {};
    \node[shape=circle,draw=black, fill fraction=black, minimum size=0.5cm] (I) at (16,0) {};
    \node[shape=circle,draw=black,minimum size=0.5cm] (J) at (17,0) {};
     \node[shape=circle,draw=black, fill=black, minimum size=0.5cm,label=above:{$m$}] (K) at (18,0) {};
      \node[shape=circle,draw=black, fill=black, minimum size=0.5cm] (L) at (19,0) {};

    \node[minimum size=0.5, label=:{\text{I}}] () at (-4.5,-0.5) {};

    \draw[-] (-L)--(-3.7,0){};
    \draw [-] (-K) -- (-L);
    \draw [-] (-J) -- (-K);
    \draw [-] (-I) -- (-J);
    \draw [-] (-H) -- (-I);
    \draw [-] (-G) -- (-H);
    \draw [-] (-F) -- (-G);
    \draw [-] (-F) -- (-E);
    \draw [-] (-E) -- (-D);
    \draw [-] (-D) -- (-C);
    \draw [-] (-C) -- (-B);
    \draw [-] (-B) -- (A);
    \draw [-] (A) -- (B);
    \draw [-] (B) -- (C);
    \draw [-] (C) -- (D);
    \draw [-] (D) -- (E);
    \draw [-] (E) -- (F);
    \draw [-] (F) -- (G);
    \draw [-] (G) -- (H);
    \draw [-] (H) -- (I);
    \draw [-] (I) -- (J);
    \draw [-] (J) -- (K);
    \draw [-] (K) -- (L);
    \draw[-](L)--(19.7,0){};

    \node[shape=circle,draw=black, minimum size=0.5cm] (-B) at (7,-2) {};
    \node[shape=circle,draw=orange, fill=orange, minimum size=0.5cm,label=above:{$l$}] (-C) at (6,-2) {};
    \node[shape=circle,draw=orange,fill=orange, minimum size=0.5cm] (-D) at (5,-2) {};
    \node[shape=circle,draw=black, minimum size=0.5cm] (-E) at (4,-2) {};
    \node[shape=circle,draw=black, fill fraction=black, minimum size=0.5cm,label=above:{$0$}] (-F) at (3,-2) {};
    \node[shape=circle,draw=black,fill fraction=black,  minimum size=0.5cm] (-G) at (2,-2) {}; 
    \node[shape=circle,draw=black,fill fraction=black, minimum size=0.5cm] (-H) at (1,-2) {};
    \node[shape=circle,draw=black,fill fraction=black, minimum size=0.5cm] (-I) at (0,-2) {};
    \node[shape=circle,draw=black, minimum size=0.5cm] (-J) at (-1,-2) {};
    \node[shape=circle,draw=black, fill=black, minimum size=0.5cm,label=above:{$-n$}] (-K) at (-2,-2) {};
    \node[shape=circle,draw=black, fill=black, minimum size=0.5cm] (-L) at (-3,-2) {};
    \node[shape=circle,draw=black,  minimum size=0.5cm] (A) at (8,-2) {};
    \node[shape=circle,draw=black, minimum size=0.5cm] (B) at (9,-2) {};
    \node[shape=circle,draw=black, minimum size=0.5cm] (C) at (10,-2) {};
    \node[shape=circle,draw=cyan, fill=cyan, minimum size=0.5cm] (D) at (11,-2) {};
    \node[shape=circle,draw=cyan, fill=cyan, minimum size=0.5cm,label=above:{$j$}] (E) at (12,-2) {};
    \node[shape=circle,draw=black, minimum size=0.5cm] (F) at (13,-2) {};
    \node[shape=circle,draw=black,fill fraction=black, minimum size=0.5cm,label=above:{$k$}] (G) at (14,-2) {};
    \node[shape=circle,draw=black,fill fraction=black,minimum size=0.5cm] (H) at (15,-2) {};
    \node[shape=circle,draw=black,fill fraction=black, minimum size=0.5cm] (I) at (16,-2) {};
    \node[shape=circle,draw=black,minimum size=0.5cm] (J) at (17,-2) {};
     \node[shape=circle,draw=black, fill=black, minimum size=0.5cm,label=above:{$m$}] (K) at (18,-2) {};
      \node[shape=circle,draw=black, fill=black, minimum size=0.5cm] (L) at (19,-2) {};

    \node[minimum size=0.5, label=:{\text{II}}] () at (-4.5,-2.5) {};

    \draw[-] (-L)--(-3.7,-2){};
    \draw [-] (-K) -- (-L);
    \draw [-] (-J) -- (-K);
    \draw [-] (-I) -- (-J);
    \draw [-] (-H) -- (-I);
    \draw [-] (-G) -- (-H);
    \draw [-] (-F) -- (-G);
    \draw [-] (-F) -- (-E);
    \draw [-] (-E) -- (-D);
    \draw [-] (-D) -- (-C);
    \draw [-] (-C) -- (-B);
    \draw [-] (-B) -- (A);
    \draw [-] (A) -- (B);
    \draw [-] (B) -- (C);
    \draw [-] (C) -- (D);
    \draw [-] (D) -- (E);
    \draw [-] (E) -- (F);
    \draw [-] (F) -- (G);
    \draw [-] (G) -- (H);
    \draw [-] (H) -- (I);
    \draw [-] (I) -- (J);
    \draw [-] (J) -- (K);
    \draw [-] (K) -- (L);
    \draw[-](L)--(19.7,-2){};

\end{tikzpicture}
}
\small \caption{An illustration of two configurations considered in the computation of $g_p(n)$ with $n=5$. The top line I shows the situation where the pair of sites $\{0,1\}$ are occupied and line II the case where they are unoccupied. In both configurations, the first adjacent pair of occupied sites before $0$, located at $\{-n-1,-n\}$, is indicated. In line II the first pair of occupied sites after the origin $\{l-1,-l\}$ is highlighted in orange. Additionally to the outer unfixed area in the future $U_j=(j+1,m-1)$, the configuration in line II possesses the outer unfixed area coming from the past $V_l=(-n+1,l-2)$.}
\label{fig: Explanation proof g-function 2}
\end{figure}

    \begin{lemma}\label{lem: Asymptotics g-functions 1}
        Equation \eqref{eq: Fraction g-function 3} can be rewritten in the form
       \begin{equation}\label{eq: asymptotics fraction g-function}
           \frac{a_k+Q^{n-2}(0,0)\sum_{j=3}^kQ^{m-j-2}(0,0)}{b_k+\sum^{k-2}_{l=2}Q^{n+l-3}(0,0)\sum^{k}_{j=l+2}Q^{m-j-2}(0,0)}
       \end{equation}
       where $a_k$ and $b_k$ are defined in \eqref{eq: Definition ak} and \eqref{eq: Definition bk}, respectively. Both sequences satisfy $\lim_{k\rightarrow \infty}\lambda_{\text{PF}}^{k-m}a_k=\lim_{k\rightarrow \infty}\lambda_{\text{PF}}^{k-m}b_k=0$, where we recall that $m=m(k)$, see \eqref{eq: definition m g-function}.
    \end{lemma}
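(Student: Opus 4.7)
My plan is to reduce \eqref{eq: Fraction g-function 3} to \eqref{eq: asymptotics fraction g-function} in three stages: (i) substitute the closed form of each partition function, (ii) extract a common factor $p^4$ from numerator and denominator, and (iii) control the remainders using the exponential decay of $Q^j(0,0)$.

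The first stage would use the formula $Z(U) = (1-p)^{-1} Q^{|U|+1}(0,0)$ from \eqref{def: Partition functions unfixed area}, together with the sizes $|V_l| = l+n-4$, $|U_j| = m-j-3$, and $|V_{m+1}| = m+n-3$, obtained directly from the explicit descriptions $V_l = (-n+1,l-2)$, $U_j = (j+1,m-1)$, and $V_{m+1} = (-n+1,m-1)$. After substitution, all the stray factors $(1-p)^{\pm 1}$ cancel, and the numerator reduces to
\begin{equation*}
    p^2 Q^{n-2}(0,0)\, Q^{m-3}(0,0) + p^3 Q^{n-2}(0,0)\, Q^{m-4}(0,0) + p^4 Q^{n-2}(0,0) \sum_{j=3}^k Q^{m-j-2}(0,0),
\end{equation*}
while the denominator acquires four summands, the last being $p^4 \sum_{l=2}^{k-2} Q^{n+l-3}(0,0)\sum_{j=l+2}^k Q^{m-j-2}(0,0)$.

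Dividing both sides by $p^4$ then recovers the principal parts of \eqref{eq: asymptotics fraction g-function}, and the remaining terms define
\begin{equation*}
    a_k := p^{-2} Q^{n-2}(0,0)\, Q^{m-3}(0,0) + p^{-1} Q^{n-2}(0,0)\, Q^{m-4}(0,0),
\end{equation*}
together with the analogous three-term expression for $b_k$ built from the isolated contribution $p^{-4}(1-p)^{-1} Q^{m+n-2}(0,0)$ coming from $Z(V_{m+1})$ and the two sums rescaled by $p^{-2}$ and $p^{-1}$.

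For the asymptotic statement I would use the closed form $Q^j(0,0) = (\lambda_{\text{PF}}^{j+1} - \lambda_r^{j+1})/\sqrt{(1-p)(3p+1)}$ read off from \eqref{eq: N-th power Q}. Since $|\lambda_r| \leq \lambda_{\text{PF}}$, this yields $|Q^j(0,0)| \leq C_p \lambda_{\text{PF}}^{j+1}$ for a $p$-dependent constant $C_p$. Every summand contributing to $a_k$ is then of order $\lambda_{\text{PF}}^{m+n+O(1)}$, so $\lambda_{\text{PF}}^{k-m} a_k = O(\lambda_{\text{PF}}^{k+n+O(1)}) \to 0$. The same bound, combined with a sum length at most $k$, gives $\lambda_{\text{PF}}^{k-m} b_k = O(k\, \lambda_{\text{PF}}^{k+n+O(1)})$, which also tends to $0$ uniformly in the sequence $m=m(k)>k+1$ because $\lambda_{\text{PF}} \in (0,1)$. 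The only care needed is the combinatorial bookkeeping of interval sizes; the decay itself follows immediately from the exponential smallness of $Q^j(0,0)$, so I do not anticipate any genuine analytic obstacle.
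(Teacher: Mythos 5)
Your proposal is correct and follows essentially the same route as the paper: substitute $Z(U)=(1-p)^{-1}Q^{|U|+1}(0,0)$ with the interval lengths $|V_l|=l+n-4$, $|U_j|=m-j-3$, $|V_{m+1}|=m+n-3$, factor out $p^4$, identify the leftover terms with $a_k$ and $b_k$ exactly as in \eqref{eq: Definition ak}--\eqref{eq: Definition bk}, and kill them via the bound $|Q^j(0,0)|\leq C_p\lambda_{\text{PF}}^{j+1}$, under which the $\lambda_{\text{PF}}^{m}$ factors cancel against $\lambda_{\text{PF}}^{k-m}$ and leave $O(k\,\lambda_{\text{PF}}^{k+n+O(1)})\to 0$ uniformly in $m(k)$. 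This matches the paper's argument step for step, up to writing $Q^j(0,0)$ as $(\lambda_{\text{PF}}^{j+1}-\lambda_r^{j+1})/\sqrt{(1-p)(3p+1)}$ instead of the equivalent $\lambda_{\text{PF}}^{j}(\lambda_{\text{PF}}-a^{j}\lambda_r)/\sqrt{(1-p)(3p+1)}$.
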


\begin{proof}
       We define 
       \begin{equation}\label{eq: Definition ak}
           a_k:=Q^{n-2}(0,0)\cdot\Big(p^{-2}Q^{m-3}(0,0)+p^{-1}Q^{m-4}(0,0)\Big)
       \end{equation}
       and 
       \begin{align}\label{eq: Definition bk}
           \nonumber b_k&:=p^{-4}(1-p)^{-1}Q^{m+n-2}(0,0)+p^{-2}\sum^{k}_{l=2}Q^{n+l-3}(0,0)Q^{m-l-2}(0,0)\\
           &+p^{-1}\sum^k_{l=3}Q^{n+l-4}(0,0)Q^{m-l-2}(0,0).
       \end{align}
       Inserting the definition $Z(U)=(1-p)^{-1}Q^{|U|+1}(0,0)$ for each unfixed region $U$ in \eqref{eq: Fraction g-function 3}, together with the definitions of $a_k$ and $b_k$ leads to the desired expression for the fraction in \eqref{eq: asymptotics fraction g-function}. Further, substitute the entries of $Q^m$, see \eqref{eq: N-th power Q}, into $a_k$ yields 
       \begin{align*}
           a_k&=\frac{\lambda_{\text{PF}}^{n+m-5}(\lambda_{\text{PF}}-a^{n-2}\lambda_r)\Big(p^{-2}(\lambda_{\text{PF}}-a^{m-3}\lambda_r)+p^{-1}\lambda_{\text{PF}}^{-1}(\lambda_{\text{PF}}-a^{m-2}\lambda_r)\Big)}{(1-p)(3p+1)}
       \end{align*}
       which satisfies $\lim_{k\rightarrow \infty} \lambda_{\text{PF}}^{k-m} a_k =0$, since $0<\lambda_{\text{PF}}<1$ and $|a|<1$. With the definition of $Q^m$, one can immediately verify that $\lim_{k\rightarrow \infty} \lambda_{\text{PF}}^{k-m} Q^{m+n-2}(0,0)=0$. Furthermore, we can rewrite
       \begin{equation*}
           \sum^{k}_{l=2}Q^{n+l-3}(0,0)Q^{m-l-2}(0,0)=\frac{\lambda_{\text{PF}}^{n+m-5}\sum^k_{l=2}(\lambda_{\text{PF}}-a^{n+l-3}\lambda_r)(\lambda_{\text{PF}}-a^{m-l-2}\lambda_r)}{(1-p)(3p+1)}.
       \end{equation*}
       Note that the numerator is upper bounded by $k\lambda_{\text{PF}}^{n+m-5}(\lambda_{\text{PF}}+|\lambda_r|)^2$
and consequently, we have $\lim_{k\rightarrow \infty}\lambda_{\text{PF}}^{k-m}\sum^{k}_{l=2}Q^{n+l-3}(0,0)Q^{m-l-2}(0,0)=0$. The remaining sum in $b_k$ can be treated in the same way, which completes the proof of $\lim_{k\rightarrow \infty}\lambda_{\text{PF}}^{k-m}b_k=0$.
\end{proof}

Let us investigate the remaining terms of \eqref{eq: asymptotics fraction g-function}. 

\begin{lemma}\label{lem: Asymptotics g-functions 2}
    For all $n>2$ and $k\in \N$, the following relations hold: 
    \begin{equation}\label{eq: Asymptotics g-function 2}
    Q^{n-2}(0,0)\sum_{j=3}^kQ^{m-j-2}(0,0)=\Tilde{a}_k+c_k\cdot Q^{n-2}(0,0)
\end{equation}
and 
\begin{align}\label{eq: Asymptotics g-function 3}
    \sum^{k-2}_{l=2}Q^{n+l-3}(0,0)\sum^{k}_{j=l+2}Q^{m-j-2}(0,0)=\Tilde{b}_k+c_k\cdot\sum^{k-2}_{l=2}Q^{n+l-3}(0,0)
\end{align}
where $\Tilde{a}_k$ and $\Tilde{b}_k$ are defined in \eqref{eq: Definition Tilde(ak) g-function} and \eqref{eq: Definition Tilde(bk) g-function}, respectively. The sequence $c_k$, given in \eqref{eq: Definition ck g-function}, satisfies that $c_k\lambda_{\text{PF}}^{k-m}$ is bounded and positive. Moreover, we have $\lim_{k\rightarrow\infty}\lambda_{\text{PF}}^{k-m}\Tilde{a}_k=\lim_{k\rightarrow\infty}\lambda_{\text{PF}}^{k-m}\Tilde{b}_k=0$.
\end{lemma}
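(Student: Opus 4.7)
The core idea is that both identities reduce to elementary combinatorial manipulations of finite sums in the spirit of partial-sum telescoping: in the double sum on the left of \eqref{eq: Asymptotics g-function 3}, the inner sum $\sum_{j=l+2}^{k}Q^{m-j-2}(0,0)$ depends on $l$ only through a cutoff, so I would use the identity $\sum_{j=l+2}^{k}=\sum_{j=3}^{k}-\sum_{j=3}^{l+1}$ to split off an $l$-independent piece. Concretely, I would define the common factor
\begin{equation*}
c_k := \sum_{j=3}^{k}Q^{m-j-2}(0,0),
\end{equation*}
and take $\tilde{a}_k$ to be the (essentially trivial, possibly zero) residual matching the statement \eqref{eq: Asymptotics g-function 2} and $\tilde{b}_k := -\sum_{l=2}^{k-2}Q^{n+l-3}(0,0)\sum_{j=3}^{l+1}Q^{m-j-2}(0,0)$. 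With these choices, \eqref{eq: Asymptotics g-function 2} is immediate and \eqref{eq: Asymptotics g-function 3} follows by interchanging the order of operations and using the $l$-independence of $c_k$.

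For the asymptotic control I would rely on the explicit form of $Q^m(0,0)$ extractable from \eqref{eq: N-th power Q}, namely the clean identity $Q^m(0,0)=(\lambda_{\text{PF}}^{m+1}-\lambda_r^{m+1})/(\lambda_{\text{PF}}-\lambda_r)$, which makes the sums geometric. To see that $\lambda_{\text{PF}}^{k-m}c_k$ is bounded and positive, I would substitute this closed form, factor out $\lambda_{\text{PF}}^{m-k}$ from the $\lambda_{\text{PF}}$-part of each summand, and recognize the remaining sum as a geometric series in $\lambda_{\text{PF}}\in(0,1)$; the $\lambda_r$-part contributes a correction of order $|a|^{m-k-1}$, which stays bounded since $|a|<1$ and $m>k+1$. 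Positivity is straightforward because $Q^s(0,0)\geq 0$ for $s\geq 0$ and at least one summand is strictly positive.

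The main work is the limit $\lambda_{\text{PF}}^{k-m}\tilde{b}_k\to 0$. To leading order each term of the double sum contributes
\begin{equation*}
\lambda_{\text{PF}}^{k-m}\cdot Q^{n+l-3}(0,0)\,Q^{m-j-2}(0,0)=O\bigl(\lambda_{\text{PF}}^{\,n+l+k-j-3}\bigr),
\end{equation*}
with subleading corrections in $a^{\cdot}$ controlled uniformly by $|a|<1$. Summing over $j\in[3,l+1]$ produces a bounded geometric piece, and the subsequent sum over $l\in[2,k-2]$ yields at most a polynomial (in $k$) multiple of $\lambda_{\text{PF}}^{n+k-3}$, which tends to zero since $\lambda_{\text{PF}}<1$. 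The same estimate applied term-by-term to the $\lambda_r$-contributions is even better, because $|\lambda_r|<\lambda_{\text{PF}}$. The potential obstacle is bookkeeping the mixed $\lambda_{\text{PF}}$/$\lambda_r$ terms coming from the expansion $Q^m(0,0)\propto \lambda_{\text{PF}}^{m+1}-\lambda_r^{m+1}$; I would isolate them into four sums (one for each choice of $\{\lambda_{\text{PF}},\lambda_r\}$ in the two factors $Q^{n+l-3}(0,0)$ and $Q^{m-j-2}(0,0)$) and estimate each separately by the same geometric argument. Since each resulting sum decays geometrically in $k$ while $m\geq k+2$, the required $\lambda_{\text{PF}}^{k-m}$-scaled limit is zero, which finishes the proof.
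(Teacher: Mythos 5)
Your proof is correct, but it organizes the decomposition differently from the paper. The paper first evaluates the inner geometric sums in closed form using $Q^s(0,0)=\lambda_{\text{PF}}^s(\lambda_{\text{PF}}-a^s\lambda_r)/\sqrt{(1-p)(3p+1)}$ and then defines $c_k$ in \eqref{eq: Definition ck g-function} as the ``saturated'' part of $\sum_{j=3}^k Q^{m-j-2}(0,0)$ (the factors $1-\lambda_{\text{PF}}^{k-2}$ and $1-\lambda_r^{k-2}$ replaced by $1$), so that $\tilde a_k$ and $\tilde b_k$ collect explicit exponentially small remainders whose rescaled limits are read off directly. You instead take $c_k:=\sum_{j=3}^k Q^{m-j-2}(0,0)$ itself, which makes \eqref{eq: Asymptotics g-function 2} an identity with $\tilde a_k=0$ and reduces \eqref{eq: Asymptotics g-function 3} to the telescoping $\sum_{j=l+2}^k=\sum_{j=3}^k-\sum_{j=3}^{l+1}$; the analytic work is then concentrated in the single estimate $\lambda_{\text{PF}}^{k-m}\tilde b_k\to 0$, which your geometric bound (inner sum over $j$ dominated by its top term, outer sum over $l$ contributing a factor of order $k$ against $\lambda_{\text{PF}}^{n+k-\mathrm{const}}$) handles correctly, as does your boundedness and positivity argument for $\lambda_{\text{PF}}^{k-m}c_k$ via the closed form $Q^s(0,0)=(\lambda_{\text{PF}}^{s+1}-\lambda_r^{s+1})/(\lambda_{\text{PF}}-\lambda_r)$ together with $|\lambda_r|<\lambda_{\text{PF}}$ and $m\geq k+2$. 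The one caveat is that the lemma pins $c_k,\tilde a_k,\tilde b_k$ to the specific displays \eqref{eq: Definition ck g-function}, \eqref{eq: Definition Tilde(ak) g-function} and \eqref{eq: Definition Tilde(bk) g-function}, so your sequences are not literally those of the paper (they differ by terms that vanish after the $\lambda_{\text{PF}}^{k-m}$ rescaling); this is harmless for the lemma's sole use in deriving the limit \eqref{eq: g-function fraction limit}, which only requires the existence of such a decomposition with $\lambda_{\text{PF}}^{k-m}c_k$ bounded away from zero and the error terms vanishing, a property both versions supply. Your route is arguably cleaner, since it avoids carrying the explicit remainder formulas through the computation.
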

\begin{proof}
We begin by carrying out the finite geometric sums over $j$
\begin{equation*}
    \sum^k_{j=3}Q^{m-j-2}(0,0)=\frac{\lambda_{\text{PF}}^{m-k}}{\sqrt{(1-p)(3p+1)}}\Big(\frac{1-\lambda_{\text{PF}}^{k-2}}{\lambda_{\text{PF}}(1-\lambda_{\text{PF}})}-a^{m-k}\frac{1-\lambda_{r}^{k-2}}{\lambda_{\text{r}}(1-\lambda_{\text{r}})}\Big).
\end{equation*}
Define
\begin{equation}\label{eq: Definition Tilde(ak) g-function}
    \Tilde{a}_k:=Q^{n-2}(0,0)\cdot\frac{\lambda_{\text{PF}}^{m-k}}{\sqrt{(1-p)(3p+1)}}\Big(a^{m-k}\frac{\lambda_{r}^{k-2}}{\lambda_{\text{r}}(1-\lambda_{\text{r}})}-\frac{\lambda_{\text{PF}}^{k-2}}{\lambda_{\text{PF}}(1-\lambda_{\text{PF}})}\Big)
\end{equation}
which satisfies $\lim_{k\rightarrow \infty}\lambda_{\text{PF}}^{k-m}\Tilde{a}_k=0$, since $|a|,|\lambda_r|,\lambda_{\text{PF}}\in (0,1)$. 
Equation \eqref{eq: Asymptotics g-function 2}, then follows by defining
\begin{equation}\label{eq: Definition ck g-function}
    c_k:=\frac{\lambda_{\text{PF}}^{m-k}}{\sqrt{(1-p)(3p+1)}}\Big(\frac{1}{\lambda_{\text{PF}}(1-\lambda_{\text{PF}})}-a^{m-k}\frac{1}{\lambda_{\text{r}}(1-\lambda_{\text{r}})}\Big).
\end{equation}
The sequence $\lambda_{\text{PF}}^{k-m}c_k$ is bounded because $a\in (-1,0)$ and $m-k\geq 0$. Furthermore, it is positive since it is lower bounded by $1/(p^2\sqrt{(1-p)(3p+1)})$. This completes the proof of \eqref{eq: Asymptotics g-function 2}.

To prove the second part of Lemma \ref{lem: Asymptotics g-functions 2}, we insert the entries of $Q^m$ and perform the geometric summations which leads to 
\begin{equation*}
    \sum^{k}_{j=l+2}Q^{m-j-2}(0,0)=c_k+\frac{\lambda_{\text{PF}}^{m-k}}{\sqrt{(1-p)(3p+1)}}\Big(a^{m-k}
    \frac{\lambda_{r}^{k-l-1}}{\lambda_{r}(1-\lambda_{r})}-\frac{\lambda_{\text{PF}}^{k-l-1}}{\lambda_{\text{PF}}(1-\lambda_{\text{PF}})}\Big),
\end{equation*}
where $c_k$ was defined in \eqref{eq: Definition ck g-function}. We obtain \eqref{eq: Asymptotics g-function 3} by defining
\begin{equation}\label{eq: Definition Tilde(bk) g-function}
   \Tilde{b}_k:= \frac{\lambda_{\text{PF}}^{m-k}}{\sqrt{(1-p)(3p+1)}}\sum^{k-2}_{l=2}Q^{n+l-3}(0,0)\Big(a^{m-k}
    \frac{\lambda_{r}^{k-l-1}}{\lambda_{r}(1-\lambda_{r})}-\frac{\lambda_{\text{PF}}^{k-l-1}}{\lambda_{\text{PF}}(1-\lambda_{\text{PF}})}\Big)
\end{equation}
for all $k\in \N$. Substituting the entries of $Q^m$ and the expression reads
\begin{align*}
   \frac{\lambda_{\text{PF}}^{m-k}}{(1-p)(3p+1)}\Bigg(\frac{a^{m-k}\lambda^{k-2}_r\lambda_{\text{PF}}^{n-3}}{1-\lambda_r}\sum^{k-2}_{l=2}a^{-l}(\lambda_{\text{PF}}-a^{n+l-3}\lambda_r)-\frac{\lambda_{\text{PF}}^{n+k-5}}{1-\lambda_{\text{PF}}}\sum^{k-2}_{l=2}(\lambda_{\text{PF}}-a^{n+l-3}\lambda_r)\Bigg).
\end{align*}
From the fact that $(\lambda_{\text{PF}}-a^m\lambda_r)\leq 1$ for all $m\in \N_0$, we have the upper bound
\begin{equation*}
    |\Tilde{b}_k|\leq \frac{\lambda_{\text{PF}}^{m-k}}{(1-p)(3p+1)}\Bigg(\frac{|\lambda_r|^{k-2}(1-|a|^{-k+5})}{a^2(1-\lambda_r)(1-|a|^{-1})}+\frac{(k-3)\lambda_{\text{PF}}^{n+k-5}}{1-\lambda_{\text{PF}}}\Bigg).
\end{equation*}
Consequently, this implies $\lim_{k\rightarrow \infty} \lambda_{\text{PF}}^{k-m}\Tilde{b}_k=0$, since $|a|^{-k}|\lambda_r|^{k}=\lambda_{\text{PF}}^{k}$. This completes the proof of Lemma \ref{lem: Asymptotics g-functions 2}.
\end{proof}

Combining Lemma \ref{lem: Asymptotics g-functions 1} and \ref{lem: Asymptotics g-functions 2} implies that the fraction in \eqref{eq: g-function fraction 2} converges to 
\begin{equation}\label{eq: g-function fraction limit}
    \frac{Q^{n-2}(0,0)}{\sum_{l=2}^{\infty}Q^{n+l-3}(0,0)}
    = \frac{\lambda_{\text{PF}}^{n-1}-\lambda_r^{n-1}}{\frac{\lambda_{\text{PF}}^{n}}{1-\lambda_{\text{PF}}}-\frac{\lambda_r^{n}}{1-\lambda_r}}
\end{equation}
which leads to the expression given in \eqref{eq: g-function for the image measure}. If $m=k$ or $m=k+1$, we do not need to consider the outer unfixed areas $U_j$, see Figure \ref{fig: Explanation plot g-function 2}, since the spins at $k$ and $k+1$, respectively, are fixed. Let us assume $m=k+1$, the case $m=k$ is handled analogously. In the numerator of \eqref{eq: g-function fraction 2}, only the unfixed area $V_1$ and the fixed spins in $[-1,1]$ contribute, since we sum over all possible configurations $\omega'_{[2,k]}$ in the remaining region. This leads to the term $p^2(1-p)Z(V_1)$. In the denominator of \eqref{eq: g-function fraction 2}, we sum over all unfixed areas $V_l$ for $2\leq l\leq k-1$ and treat $V_{k+2}$ separately. For each $V_l$, we additionally obtain the weights $p^2(1-p)$ corresponding to the fixed sites at $[l-2,l]$. Furthermore, we sum over all possible configurations $\omega'_{[l+1,k]}$ in the remaining region. This leads to the term $Z(V_{k+2})+p^2(1-p)\sum_{l=2}^{k-1}Z(V_l)$ in the denominator. Since $\lim_{k\rightarrow \infty}Z(V_{k+2})=0$, it follows that \eqref{eq: g-function fraction 2} converges to \eqref{eq: g-function fraction limit}.

    \textbf{Case 3.} (n=2) Note that $n=2$ implies that the sites in $[-3,-1]$ are fixed and the state $\Tilde{\omega}'_{\{1\}}$ in the numerator of \eqref{eq: g-function fraction} has to be occupied under the non-isolation constraint. Let us only give quick discussion of the case $k+1<m<\infty$. As in the proceeding of Case 2, we sum over all outer unfixed areas of the form $U_j=(j+1,m-1)$ with $1\leq j\leq k$ in the numerator. In the denominator, we consider all unfixed areas $V_l:=(-1,l-2)$ for $2\leq l\leq k$ and in particular the special case $V_{m+1}=(-1,m-1)$. For each unfixed area $V_l$, we also consider the unfixed areas $U_j$ where $l\leq j\leq k$, see Figure \ref{fig: Explanation proof g-function n=2} for an illustration. Therefore, we obtain a slightly different expression for \eqref{eq: g-function fraction} as in the second case, namely
    \begin{align*}
        &\Big(p^2(1-p)Z(U_1)+p^3(1-p)Z(U_2)+p^4(1-p)\sum_{j=3}^kZ(U_j)\Big)\Big/\Big(Z(V_{m+1})+\\
        &+p^2(1-p)^2\sum_{l=2}^{k} Z(V_l)Z(U_l)+(1-p)^2p^3\sum_{l=3}^{k}Z(V_{l-1})Z(U_l)+(1-p)^2p^4\sum^{k-2}_{l=2}Z(V_l)\sum^{k}_{j=l+2}Z(U_j)\Big).
    \end{align*}
    With the same ideas as in the second case, we obtain that this expression converges to\\ $\Big(\sum^\infty_{l=2}Q^{l-1}(0,0)\Big)^{-1}$. Inserting the entries of $Q^m$, see \eqref{eq: N-th power Q}, and performing the geometric sums, the fraction becomes
    \begin{equation*}
        \frac{\sqrt{(1-p)(3p+1)}}{\frac{\lambda_{\text{PF}}^2}{1-\lambda_{\text{PF}}}-\frac{\lambda_{\text{r}}^2}{1-\lambda_{\text{r}}}}=\frac{p^2}{1-p^2}
    \end{equation*}
    which leads to the expression \eqref{eq: g-function fraction} for $n=2$.

    \textbf{Case 4.} ($n=\infty$) 
    This case corresponds to the configuration fully empty past (which is a.s. invisible for finite $p$), and can be treated along the same lines.

    \begin{figure}[ht!]
\centering
\scalebox{0.5}{
\begin{tikzpicture}[every label/.append style={scale=1.3},fill fraction/.style={path picture={
\fill[#1] 
(path picture bounding box.south) rectangle
(path picture bounding box.north west);
}},
fill fraction/.default=gray!50]
\node[shape=circle,draw=black,  minimum size=0.5cm] (-B) at (7,0) {};
    \node[shape=circle,draw=black, minimum size=0.5cm] (-C) at (6,0) {};
    \node[shape=circle,draw=black, minimum size=0.5cm] (-D) at (5,0) {};
    \node[shape=circle,draw=black, fill=black,  minimum size=0.5cm] (-E) at (4,0) {};
    \node[shape=circle,draw=black, fill=black,  minimum size=0.5cm,label=above:{$0$}] (-F) at (3,0) {};
    \node[shape=circle,draw=black,  minimum size=0.5cm] (-G) at (2,0) {}; 
    \node[shape=circle,draw=black, fill=black, minimum size=0.5cm] (-H) at (1,0) {};
    \node[shape=circle,draw=black, fill=black, minimum size=0.5cm] (-I) at (0,0) {};
    
    \node[shape=circle,draw=black,  minimum size=0.5cm] (A) at (8,0) {};
    \node[shape=circle,draw=black, minimum size=0.5cm] (B) at (9,0) {};
    \node[shape=circle,draw=black,  minimum size=0.5cm] (C) at (10,0) {};
    \node[shape=circle,draw=cyan,fill=cyan,  minimum size=0.5cm] (D) at (11,0) {};
    \node[shape=circle,draw=cyan, fill=cyan, minimum size=0.5cm,label=above:{$j$}] (E) at (12,0) {};
    \node[shape=circle,draw=black,  minimum size=0.5cm] (F) at (13,0) {};
    \node[shape=circle,draw=black, fill fraction=black, minimum size=0.5cm,label=above:{$k$}] (G) at (14,0) {};
    \node[shape=circle,draw=black, fill fraction=black,minimum size=0.5cm] (H) at (15,0) {};
    \node[shape=circle,draw=black, fill fraction=black, minimum size=0.5cm] (I) at (16,0) {};
    \node[shape=circle,draw=black,minimum size=0.5cm] (J) at (17,0) {};
     \node[shape=circle,draw=black, fill=black, minimum size=0.5cm,label=above:{$m$}] (K) at (18,0) {};
      \node[shape=circle,draw=black, fill=black, minimum size=0.5cm] (L) at (19,0) {};

    \node[minimum size=0.5, label=:{\text{I}}] () at (-1.5,-0.5) {};

    \draw [-] (-I) -- (-0.7,0);
    \draw [-] (-H) -- (-I);
    \draw [-] (-G) -- (-H);
    \draw [-] (-F) -- (-G);
    \draw [-] (-F) -- (-E);
    \draw [-] (-E) -- (-D);
    \draw [-] (-D) -- (-C);
    \draw [-] (-C) -- (-B);
    \draw [-] (-B) -- (A);
    \draw [-] (A) -- (B);
    \draw [-] (B) -- (C);
    \draw [-] (C) -- (D);
    \draw [-] (D) -- (E);
    \draw [-] (E) -- (F);
    \draw [-] (F) -- (G);
    \draw [-] (G) -- (H);
    \draw [-] (H) -- (I);
    \draw [-] (I) -- (J);
    \draw [-] (J) -- (K);
    \draw [-] (K) -- (L);
    \draw[-](L)--(19.7,0){};

    \node[shape=circle,draw=orange, fill=orange, minimum size=0.5cm,label=above:{$l$}] (-B) at (7,-2) {};
    \node[shape=circle,draw=orange, fill=orange, minimum size=0.5cm] (-C) at (6,-2) {};
    \node[shape=circle,draw=black, minimum size=0.5cm] (-D) at (5,-2) {};
    \node[shape=circle,draw=black,fill fraction=black, minimum size=0.5cm] (-E) at (4,-2) {};
    \node[shape=circle,draw=black, fill fraction=black, minimum size=0.5cm,label=above:{$0$}] (-F) at (3,-2) {};
    \node[shape=circle,draw=black,  minimum size=0.5cm] (-G) at (2,-2) {}; 
    \node[shape=circle,draw=black,fill=black, minimum size=0.5cm] (-H) at (1,-2) {};
    \node[shape=circle,draw=black,fill=black, minimum size=0.5cm] (-I) at (0,-2) {};
    \node[shape=circle,draw=black,  minimum size=0.5cm] (A) at (8,-2) {};
    \node[shape=circle,draw=black, minimum size=0.5cm] (B) at (9,-2) {};
    \node[shape=circle,draw=black, minimum size=0.5cm] (C) at (10,-2) {};
    \node[shape=circle,draw=cyan, fill=cyan, minimum size=0.5cm] (D) at (11,-2) {};
    \node[shape=circle,draw=cyan, fill=cyan, minimum size=0.5cm,label=above:{$j$}] (E) at (12,-2) {};
    \node[shape=circle,draw=black, minimum size=0.5cm] (F) at (13,-2) {};
    \node[shape=circle,draw=black,fill fraction=black, minimum size=0.5cm,label=above:{$k$}] (G) at (14,-2) {};
    \node[shape=circle,draw=black,fill fraction=black,minimum size=0.5cm] (H) at (15,-2) {};
    \node[shape=circle,draw=black,fill fraction=black, minimum size=0.5cm] (I) at (16,-2) {};
    \node[shape=circle,draw=black,minimum size=0.5cm] (J) at (17,-2) {};
     \node[shape=circle,draw=black, fill=black, minimum size=0.5cm,label=above:{$m$}] (K) at (18,-2) {};
      \node[shape=circle,draw=black, fill=black, minimum size=0.5cm] (L) at (19,-2) {};

    \node[minimum size=0.5, label=:{\text{II}}] () at (-1.5,-2.5) {};

    \draw [-] (-I) -- (-0.7,-2);
    \draw [-] (-H) -- (-I);
    \draw [-] (-G) -- (-H);
    \draw [-] (-F) -- (-G);
    \draw [-] (-F) -- (-E);
    \draw [-] (-E) -- (-D);
    \draw [-] (-D) -- (-C);
    \draw [-] (-C) -- (-B);
    \draw [-] (-B) -- (A);
    \draw [-] (A) -- (B);
    \draw [-] (B) -- (C);
    \draw [-] (C) -- (D);
    \draw [-] (D) -- (E);
    \draw [-] (E) -- (F);
    \draw [-] (F) -- (G);
    \draw [-] (G) -- (H);
    \draw [-] (H) -- (I);
    \draw [-] (I) -- (J);
    \draw [-] (J) -- (K);
    \draw [-] (K) -- (L);
    \draw[-](L)--(19.7,-2){};

\end{tikzpicture}
}
\small \caption{An illustration of two configurations discussed in the proof of $g_p(2)$ is shown. Panel I depicts the case where $\{0,1\}$, while Panel II shows the case where these sites are unoccupied. In both panels, the maximum of the observation window $k$, the occupied pair $\{-3,-2\}$ and the first occupied pair after $k$, located at $\{m,m+1\}$, are indicated. In addition, the first occupied pair $\{j-1,j\}$ before $k$ is highlighted in blue, and in Panel II the first occupied pair after the origin $\{l-1,-l\}$, is highlighted in orange.}
\label{fig: Explanation proof g-function n=2}
\end{figure}
\end{proof}

\subsection{Proposition \ref{thm: Unique g-measure}: Uniqueness of the g-measure for the LIS}

In order to show that $\mu_p'$ is the unique $g$-measure for the LIS $\rho_p$, computed in Theorem \ref{thm: g-function for mu'_p}, we first define the \textit{$n^{\text{th}}$ variation} of a function $\bar{g}:\Omega'_{(-\infty,0]}\rightarrow (0,1)$ as 
\begin{equation}
    \text{var}_n(\bar{g}):=\sup_{\substack{\omega',\eta'\in \Omega'\\\omega'_{[-n,0]}=\eta'_{[-n,0]}}} \big| \bar{g}(\omega'_{\{0\}}\omega'_{<0})-\bar{g}(\eta'_{\{0\}}\eta'_{<0})\big|
\end{equation}
for $n \in \N_0$. We build on results in \cite{FeMa05}, namely Theorem 4.1 and Proposition 4.2, which state that the shift-invariant LIS $\rho_p$ with $g$-function $\bar{g}_p:\Omega'_{(-\infty,0]}\rightarrow (0,1)$ has a unique g-measure if the variations of $\bar{g}_p$ are summable and $\bar{g}_p$ is uniformly non-null. More precisely, the conditions are
\begin{equation}\label{eq: Condition uniqueness g-measure}
    \sum_{n=0}^\infty \text{var}_n(\bar{g}_p) <\infty~~\text{and}~~\inf_{\omega'\in \Omega'}\bar{g}_p(\omega'_{(-\infty,0]})>0.
\end{equation}

\begin{proof}[Proof of Proposition \ref{thm: Unique g-measure}]
   We need to analyze the function $\bar{g}_p(\cdot)=\rho_{p,\{0\}}(0_{\{0\}}|\cdot)$ in more detail via the special form given as $g_p(n)$ (see Theorem \ref{thm: g-function for mu'_p}). From the definition of $g_p(n)$, given in \eqref{eq: g-function for the image measure}, and the fact that $|a|<1$, it is easy to see that $g_p(n)$ converges to $\lambda_{\text{PF}}$ as $n\rightarrow \infty$. Consequently, the function $g_p$ is uniformly non-null because $\lambda_{\text{PF}}>0$ and $g_p(n)>0$ for all $n\in \N$.
    Furthermore, one can easily see that $g_p(2n) \leq g_p(2n+2)$ and $g_p(2n+1) \geq g_p(2n+3)$ for all $n \in \N$. First of all, we consider the sequence of even numbers and obtain
     \begin{align*}
        g_p(2n)=\frac{-|a|^{2n}\frac{\lambda_{PF}}{1-\lambda_{r}}+ \frac{\lambda_{PF}}{1-\lambda_{PF}}}{|a|^{2n-1}\frac{1}{1-\lambda_{r}}+\frac{1}{1-\lambda_{PF}}}\leq \frac{-|a|^{2n+2}\frac{\lambda_{PF}}{1-\lambda_{r}}+ \frac{\lambda_{PF}}{1-\lambda_{PF}}}{|a|^{2n+1}\frac{1}{1-\lambda_{r}}+\frac{1}{1-\lambda_{PF}}}=g_p(2n+2)
    \end{align*}
    for all $n \in \N$. Once again, we used the fact that $|a|<1$. Analogously, one can show that $g_p(2n+1)\geq g_p(2n+3)$
    for all $n\in \N$. Combining both results implies $\text{var}_n(g_p)=|g_p(n)-g_p(n+1)|$ for all $n\in \N_{\geq 2}$. From that, we compute the explicit expression for a given $n\in \N_{\geq 2}$ as
    \begin{align*}
        \text{var}_n(g_p)&=|a|^{n-1}\bigg|\frac{\frac{\lambda_{\text{PF}}}{(1-\lambda_r)(1-\lambda_{\text{PF}})}+2|a|\frac{\lambda_{\text{PF}}}{(1-\lambda_r)(1-\lambda_{\text{PF}})}+|a|^{2}\frac{\lambda_{\text{PF}}}{(1-\lambda_r)(1-\lambda_{\text{PF}})}}{-\frac{|a|^{2n-1}}{(1-\lambda_{r})^2}+ (-1)^n\frac{|a|^{n-1}(1-|a|)}{(1-\lambda_r)(1-\lambda_{\text{PF}})}+\frac{1}{(1-\lambda_{\text{PF}})^2}}\bigg|.
    \end{align*}
    Let us take a look at the quotient of adjacent variations at $n$ and $n+1$. We obtain
    \begin{align*}
        \frac{\text{var}_{n+1}(g_p)}{\text{var}_{n}(g_p)}=|a|\frac{-\frac{|a|^{2n-1}}{(1-\lambda_{r})^2}+(-1)^n\frac{|a|^{n-1}(1-|a|)}{(1-\lambda_r)(1-\lambda_{\text{PF}})}+\frac{1}{(1-\lambda_{\text{PF}})^2}}{-\frac{|a|^{2n+1}}{(1-\lambda_{r})^2}+(-1)^{n+1}\frac{|a|^{n}(1-|a|)}{(1-\lambda_r)(1-\lambda_{\text{PF}})}+\frac{1}{(1-\lambda_{\text{PF}})^2}}.
    \end{align*}
    Using $|a|<1$, leads to $\lim_{n \rightarrow \infty} \frac{\text{var}_{n+1}(g)}{\text{var}_{n}(g)}=|a|$ and by comparison with the geometric series this leads to the absolute convergence of $
        \sum_{n=0}^\infty \text{var}_{n}(g)$.
\end{proof}

\subsection{Theorem \ref{thm: Relation TBF and GHoC}: Relation between GHoC and TBF}

We show uniqueness of the stationary distribution. 
To see this almost without computation, let us apply Foster's Theorem, see \cite{Br10} Chapter 7. If we can find a function 
$h:\N_0\cup\{\infty\} \rightarrow [0,\infty)$ such that 
$\E^{GHoC}_p (h(X_{n+1}) | X_{n}=j)\leq h(j)-\epsilon$ outside of a finite set 
of $j$'s, then the Markov chain is positive recurrent. Therefore, as 
our Markov chain is also irreducible, existence and uniqueness of the stationary distribution follows. 
This can be achieved with the choice $h(0)=h(\infty):=0$ and 
$h(j):=A^j$ where $A:=(1+1/\lambda_{PF})/2$. To verify this, write out the condition and observe 
that $\lim_{j\rightarrow\infty} g_p(j)=\lambda_{PF}<1$. 

\subsection{Remark \ref{rk: GHoC implies unique g-measure}: GHoC implies uniqueness of g-measure}

\begin{proof}[Proof of Remark \ref{rk: GHoC implies unique g-measure}]
    Let $p\in (0,1)$ and assume that $\nu_p$ is a $g$-measure for the LIS $\rho_p$, which is uniquely determined by the $g$-function in \eqref{eq: g-function for the image measure}. Consequently, we have 
    \begin{equation*}
        \nu_p(\omega'_{\{i\}}|\omega'_{(-\infty,i-1]})=\rho_{p,\{i\}}(\omega'_{\{i\}}|\omega'_{(-\infty,i-1]})
    \end{equation*}
    for all $i\in \Z$ and $\nu_p$-almost all $\omega'\in \Omega'$. First, note that 
    \begin{equation*}
        \nu_p(\omega'_{[0,n]})=\int\nu_p(d\omega'_{-\N})\prod^n_{i=0}\nu_p(\omega'_{\{i\}}|\omega'_{(-\infty,i-1]}).
    \end{equation*}
    Then, recalling the definition of the distance $n_i$ to the next pair of occupied sites in the past of the site $i$, given in \eqref{def: D2D1 definition}, we can write 
    \begin{equation*}
        \rho_{p,\{i\}}(\omega'_{\{i\}}|\omega'_{(-\infty,i-1]})=g_p(n_i(\omega'))\mathds{1}_{\omega'_i=0}+\big(1-g_p(n_i(\omega'))\big)\mathds{1}_{\omega'_i=1}
    \end{equation*}
    for all $i\in \Z$ and $\omega'\in \Omega'$. The function $n_i(\omega')$ depends only on the values of $n_{i-1}(\omega')$ and $\omega'_{i-1}$. Consequently, the display above reads
    \begin{equation*}
        \nu_p(\omega'_{[0,n]})=\sum^\infty_{k=0}\nu_p(n_0=k)\mathds{1}_{n_0(\omega')=k}\prod^{n}_{i=0}\Pi_p(n_i(\omega'),n_{i+1}(\omega')).
    \end{equation*}
     which implies that $\nu_p$ is determined by $(\nu_p(n_0=k))_{k\in \N_0\cup \{\infty\}}$ and the entries of the transfer matrix $\Pi_p$. By the translation invariance of the measure $\nu_p$, the distribution of $n_0$ under $\nu_p$ satisfies the following equality 
     \begin{equation*}
        \nu_p(n_0=k)=\nu_p(n_1=k)=\sum_{l=0}^\infty\nu_p(n_0=l)\Pi_p(l,k)
    \end{equation*}
     for all $l,k\in \N_0\cup \{\infty\}$. Therefore, we have $\nu_p(n_0=k)=\pi_p(k)$ for all $k\in \N_0\cup \{\infty\}$ where $\pi_p$ is the unique stationary distribution of the GHoC Markov chain, see Theorem \ref{thm: Relation TBF and GHoC}.
\end{proof}

\subsection*{Appendix A: Properties of the transfer operator $Q$}\label{Sec: Appendix A}\addcontentsline{toc}{section}{Appendix A}

The following lemma serves to derive the kernels of the the two-sided specification $\gamma_p'$ for the TBF, see the proof of Theorem \ref{thm: Existence image specification}.

\begin{lemma}\label{lem: Limit Matrix Q}
    Let $Q$ be the transfer operator defined in \eqref{eq: Transfer matrix Q}. Then, 
    \begin{equation}\label{eq: One-sided limit Q^n}
        \lim_{n\rightarrow \infty}\frac{Q^{n+i}(\omega_n,x)}{Q^{n}(\omega_n,y)}=\lim_{n\rightarrow \infty}\frac{Q^{n+i}(x,\eta_n)}{Q^{n}(y,\eta_n)}=\lambda_{\text{PF}}^{i}\frac{v_{\text{PF}}(x)}{v_{\text{PF}}(y)}
    \end{equation}
    and
    \begin{equation}\label{eq: Two-sided limit Q^n}
        \lim_{n,m \rightarrow \infty}\frac{Q^{n+m+i}(\omega_{n},\eta_{m})}{Q^n(\omega_n,x)Q^m(y,\eta_m)}=\lambda^i_{\text{PF}}\frac{C(p)}{v_{\text{PF}}(x)v_{\text{PF}}(y)}
    \end{equation}
    for all $x,y \in\{0,1\}$, $i\in \Z$ and independently of the sequences $(\omega_n)_{n\in \N}$ and $(\eta_n)_{n\in \N}$ in $\Omega^\N$. Here, $\lambda_{\text{PF}}$ denotes the Perron-Frobenius eigenvalue of the transfer operator $Q$, see \eqref{eq: Eigenvalues of Q}, $v_{\text{PF}}$ the corresponding eigenvector, cf. \eqref{eq: Right eigenvectors Q}, and $C(p):=\sqrt{(1-p)(3p+1)}/|\lambda_{\text{r}}|$, where $\lambda_r$ is the real eigenvalue of $Q$, see \eqref{eq: Eigenvalues of Q}.
\end{lemma}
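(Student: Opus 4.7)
The plan is to exploit the spectral decomposition of the symmetric $2\times 2$ transfer operator $Q$. Since $Q$ is symmetric with distinct eigenvalues $\lambda_{\text{PF}}$ and $\lambda_r$, we have the orthogonal resolution
\begin{equation*}
Q^n(a,b) = \lambda_{\text{PF}}^n \frac{v_{\text{PF}}(a) v_{\text{PF}}(b)}{\|v_{\text{PF}}\|^2} + \lambda_r^n \frac{v_r(a) v_r(b)}{\|v_r\|^2}
\end{equation*}
for all $n\in \N$ and $a,b\in \{0,1\}$, where $v_{\text{PF}}$ and $v_r$ are the right eigenvectors, and the Perron-Frobenius entries $v_{\text{PF}}(0),v_{\text{PF}}(1)$ are strictly positive.

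For the one-sided limit \eqref{eq: One-sided limit Q^n}, I would substitute this decomposition into numerator and denominator and pull out a common factor $\lambda_{\text{PF}}^n$, yielding
\begin{equation*}
\frac{Q^{n+i}(\omega_n,x)}{Q^{n}(\omega_n,y)} = \lambda_{\text{PF}}^i \cdot \frac{\|v_r\|^2 v_{\text{PF}}(\omega_n) v_{\text{PF}}(x) + a^{n+i}\|v_{\text{PF}}\|^2 v_r(\omega_n) v_r(x)}{\|v_r\|^2 v_{\text{PF}}(\omega_n) v_{\text{PF}}(y) + a^{n}\|v_{\text{PF}}\|^2 v_r(\omega_n) v_r(y)},
\end{equation*}
with $a=\lambda_r/\lambda_{\text{PF}}\in (-1,0)$ as in \eqref{eq: Fraction eigenvalues}. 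Because $\{0,1\}$ is finite and $v_{\text{PF}}$ has strictly positive entries, the quantity $v_{\text{PF}}(\omega_n)$ is uniformly bounded away from zero, whence the subleading terms $a^{n},a^{n+i}$ vanish uniformly in $(\omega_n)$. The limit $\lambda_{\text{PF}}^i v_{\text{PF}}(x)/v_{\text{PF}}(y)$ follows. The second equality in \eqref{eq: One-sided limit Q^n} is obtained by the same computation, or simply by invoking the symmetry $Q^n(a,b)=Q^n(b,a)$.

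For the two-sided limit \eqref{eq: Two-sided limit Q^n}, I would insert the spectral decomposition into $Q^{n+m+i}(\omega_n,\eta_m)$, $Q^n(\omega_n,x)$, and $Q^m(y,\eta_m)$. Factoring the dominant power $\lambda_{\text{PF}}^{n+m+i}$ from the numerator and $\lambda_{\text{PF}}^{n+m}$ from the denominator, all contributions involving $a^n$ or $a^m$ vanish in the joint limit $n,m\to\infty$ by the same uniform positivity argument. What remains is
\begin{equation*}
\lambda_{\text{PF}}^i \cdot \frac{v_{\text{PF}}(\omega_n) v_{\text{PF}}(\eta_m)/\|v_{\text{PF}}\|^2}{\bigl(v_{\text{PF}}(\omega_n) v_{\text{PF}}(x)/\|v_{\text{PF}}\|^2\bigr)\bigl(v_{\text{PF}}(y) v_{\text{PF}}(\eta_m)/\|v_{\text{PF}}\|^2\bigr)} = \lambda_{\text{PF}}^i \cdot \frac{\|v_{\text{PF}}\|^2}{v_{\text{PF}}(x)v_{\text{PF}}(y)},
\end{equation*}
with the $(\omega_n,\eta_m)$ dependence cancelling exactly.

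The only remaining point, and the sole place where computation is unavoidable, is to verify the algebraic identity $\|v_{\text{PF}}\|^2 = C(p) = \sqrt{(1-p)(3p+1)}/|\lambda_r|$. I would carry this out by plugging the explicit eigenvector entries from \eqref{eq: Right eigenvectors Q} into the norm, using the relations $\lambda_{\text{PF}}+\lambda_r=1-p$, $\lambda_{\text{PF}}\lambda_r=-p(1-p)$, and $\lambda_{\text{PF}}-\lambda_r=\sqrt{(1-p)(3p+1)}$ already recorded in \eqref{eq: Eigenvalues of Q}. I do not expect an obstacle here beyond bookkeeping; the conceptual content of the lemma is entirely captured by the Perron-Frobenius gap $|a|<1$ and the strict positivity of $v_{\text{PF}}$ on the two-element alphabet.
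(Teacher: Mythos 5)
Your proposal is correct and follows essentially the same route as the paper: diagonalize the $2\times 2$ transfer matrix, isolate the Perron--Frobenius projection, and use $|a|<1$ together with the strict positivity of $v_{\text{PF}}$ on the two-letter alphabet to get convergence uniformly in the boundary sequences. The only (cosmetic) difference is that you exploit the symmetry of $Q$ to write the spectral decomposition with orthogonal projectors $v v^{T}/\|v\|^{2}$, which makes the identification $C(p)=\|v_{\text{PF}}\|^{2}=(\lambda_{\text{PF}}-\lambda_{\text{r}})/|\lambda_{\text{r}}|$ immediate, whereas the paper computes $B^{-1}$ explicitly and reads off $B^{-1}_{1x}=C(p)^{-1}v_{\text{PF}}(x)$; both yield the same constant.
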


\begin{proof}
    Let us start to compute the matrix $Q^N$ for a fixed $N \in \N$. To this end, note that 
    \begin{equation}\label{eq: Right eigenvectors Q}
    v_{\text{r}}=\begin{pmatrix}
        \frac{\lambda_r}{\sqrt{p(1-p)}}\\
        1
    \end{pmatrix}~~\text{and}~~~~~
    v_{\text{PF}}=\begin{pmatrix}
        \frac{\lambda_{\text{PF}}}{\sqrt{p(1-p)}}\\
        1
    \end{pmatrix}
\end{equation}
are right eigenvectors corresponding to the eigenvalues $\lambda_r$ and $\lambda_{\text{PF}}$, respectively. Define the matrix $B=(B_{xy})_{x,y\in \{0,1\}}=(v_{\text{r}},v_{\text{PF}})$ whose columns consist of these eigenvectors of $Q$. Its inverse can be computed as 
\begin{equation*}
    B^{-1}=(B^{-1}_{xy})_{x,y\in \{0,1\}}=\frac{1}{\sqrt{(1-p)(3p+1)}}\begin{pmatrix}
        -\sqrt{p(1-p)}& \lambda_{\text{PF}}\\
        \sqrt{p(1-p)}& -\lambda_r
    \end{pmatrix}.
\end{equation*}
Hence, we obtain the relation $D=B^{-1}QB$ where $D$ denotes the diagonal form of $Q$ with the eigenvalues on the diagonal. Consequently,
    \begin{align}\label{eq: N-th power Q generalized}
        Q^N=BD^NB^{-1}=\lambda_{\text{PF}}^N\begin{pmatrix}
        a^N B_{00} B^{-1}_{00}+B_{01}B^{-1}_{10}& a^NB_{00}B^{-1}_{01}+B_{01}B_{11}^{-1}\\
        a^N B_{10}B^{-1}_{00}+B_{11}B^{-1}_{10}& a^NB_{10}B_{01}^{-1}+B_{11}B^{-1}_{11}
    \end{pmatrix}
    \end{align}
    where $a=a(p)$ denotes the eigenvalue ratio, see \eqref{eq: Fraction eigenvalues}. Thus, each entry can be written as $BD^NB^{-1}(x,y)=\lambda_{\text{PF}}^N\big(a^N B_{x0}B^{-1}_{0y}+B_{x1}B^{-1}_{1y}\big)$ for all $x,y\in \{0,1\}$.
    
    Let $(\omega_n)_{n \in \N}$ be an arbitrary sequence in $\{0,1\}^\N$ and let $x,y\in \{0,1\}$. Then, 
    \begin{align*}
        &\bigg|\frac{Q^{n+i}(\omega_n,x)}{Q^{n}(\omega_n,y)}-\lambda_{\text{PF}}^i\frac{B^{-1}_{1x}}{B^{-1}_{1y}}\bigg|=\lambda_{\text{PF}}^ia^n\bigg|\frac{a^{i} B_{\omega_n 0}B^{-1}_{0x}B^{-1}_{1y}-B^{-1}_{1x}B_{\omega_n 0}B^{-1}_{0y}}{a^n B_{\omega_n 0}B^{-1}_{0y}B^{-1}_{1y}+B_{\omega_n 1}\big(B^{-1}_{1y}\big)^2}\bigg| \xrightarrow[]{n \rightarrow \infty}0
    \end{align*}
    independently of the sequence $(\omega_n)_{n\in \N}$, since $\lim_{n\rightarrow \infty}a^n=0$ and $B_{\omega_n1}>0$ for all $\omega_n\in \{0,1\}$. Note that $ B^{-1}_{1x}=C(p)^{-1}v_{\text{PF}}(x)$ for all $x\in \{0,1\}$,
 and therefore the first part of \eqref{eq: One-sided limit Q^n} follows. Analogously, the second part of \eqref{eq: One-sided limit Q^n} can be verified. Indeed, for $x,y\in \{0,1\}$ and any sequence $(\eta_n)_{n\in \N}$ in $\{0,1\}^\N$, we have
 \begin{align*}
        &\bigg|\frac{Q^{n+i}(x,\eta_n)}{Q^{n}(y,\eta_n)}-\lambda_{\text{PF}}^i\frac{B_{x 1}}{B_{y 1}}\bigg|=\lambda_{\text{PF}}^ia^n \bigg|\frac{a^{i} B_{x 0}B^{-1}_{0\eta_n}B_{y 1}-B_{x1}B_{y 0}B^{-1}_{0\eta_n}}{a^n B_{y 0}B^{-1}_{0\eta_n}B_{y 1}+B^{-1}_{1\eta_n}\big(B_{y 1}\big)^2}\bigg|\xrightarrow[]{n \rightarrow \infty}0.
    \end{align*}
    Since $B_{x1}=v_{\text{PF}}(x)$ for all $x\in \{0,1\}$, the desired statement follows. 

To establish \eqref{eq: Two-sided limit Q^n}, let $x,y\in \{0,1\}$ and $(\omega_n)_{n\in \N}$, $(\eta_m)_{m \in \N}$ be arbitrary sequences in $\{0,1\}^\N$. Then we obtain
    \begin{align*}
        &\bigg|\frac{Q^{n+m+i}(\omega_n,\eta_m)}{Q^n(\omega_n,x)Q^m(y,\eta_m)}-\lambda_{\text{PF}}^i\frac{1}{B^{-1}_{1x}B_{y1}}\bigg|\\
        &=\lambda_{\text{PF}}^i\big|\big(a^{n+m+i} B_{\omega_n 0}B^{-1}_{0\eta_m }B^{-1}_{1x}B_{y1}-a^{n+m} B_{\omega_n 0} B^{-1}_{0x}B_{y0}B^{-1}_{0\eta_m}-a^n B_{\omega_n0} B^{-1}_{0x}B_{y1}B^{-1}_{1\eta_m}\\
        &-a^mB_{\omega_n 1}B^{-1}_{1x}B_{y0}B^{-1}_{0\eta_m}\big)\big/\big(a^{n+m} B_{\omega_n 0} B^{-1}_{0x}B_{y0}B^{-1}_{0\eta_m}B^{-1}_{1x}B_{y1}+a^n B_{\omega_n0} B^{-1}_{0x}B_{y1}B^{-1}_{1\eta_m}B^{-1}_{1x}B_{y1}\\
        &+a^mB_{\omega_n 1}(B^{-1}_{1x})^2B_{y0}B^{-1}_{0\eta_m}B_{y1}+B_{\omega_n 1}(B^{-1}_{1x}B_{y1})^2B^{-1}_{1\eta_m}\big)\big|.
    \end{align*}
    Note that $B_{\omega_n 1}(B^{-1}_{1x}B_{y1})^2B^{-1}_{1\eta_m}>0$ for all $\omega_n,\eta_m\in \{0,1\}$ and therefore, the denominator is positive for $n,m\in \N$ large enough. Since the difference in the numerator converges to zero as $n,m\rightarrow \infty$, the claim in \eqref{eq: Two-sided limit Q^n} follows after substituting the definition of $B_{1x}^{-1}$ and $B_{y1}$.
\end{proof}

\subsection*{Appendix B: Finite-energy criterion on subspace $\Omega'$}\label{Sec: Appendix B}\addcontentsline{toc}{section}{Appendix B}

 We formulate our specification only on the smaller measurable space $(\Omega',\mathscr{F}')$ containing only the non-isolated configurations. (Further definitions for boundary conditions allowing for configurations which do not respect the non-isolation constraint seem artificial and we would like to avoid them.) One may wonder whether the "finite-energy" criterion which is given in \cite{Ge11} as Proposition 8.38 in the context of specifications in product spaces remains true.

In inspection of the proof given in \cite{Ge11} on pages 164-165 however reveals that we can fully 
restrict ourselves to the constructions of the \textit{trace $\sigma$-algebra} of the product $\sigma$-algebra 
$\mathscr{F}$ to $\Omega'$, which is by definition $\mathscr{F}':=\mathscr{F}|_{\Omega'}=\mathscr{F}\cap \Omega'$. Note the handy measure-theoretic property, see Corollary 1.83 in \cite{Kl14}, that 
\begin{equation*}
    \sigma( \mathcal{C}_{\Z})|_{\Omega'}=\sigma(\mathcal{C}_{\Z}\cap \Omega')
\end{equation*}
where $\mathcal{C}_{\Z}:=\{\sigma_\Lambda^{-1}(A):~\Lambda\Subset \Z~\text{and}~A\in \mathcal{P}(\{-1,1\})^\Lambda\}$ denotes the algebra of cylinders which forms a generator of $\mathscr{F}$.

In words, this means that the smallest sigma algebra containing a traced generator (here: the cylinder sets), 
is equal to the traced sigma algebra of the generator. In our case, forming the trace 
means to restrict to the space of non-isolates. 

A second important point is to realize that the implication from tail-triviality for Gibbs-measures supported on $\Omega'$, 
to extremality (in the set of Gibbs measures which are supported on $\Omega'$) remains true as it does on product spaces. 
This is proved along the same lines as in the corresponding theorem for product spaces, see for instance Theorem 7.7 a) of \cite{Ge11} where the key ingredient is Proposition 7.3 in \cite{Ge11} applied to finite-volume kernels 
(where the last proposition notably does not assume product spaces).

\subsection*{Appendix C: Parity dependence on the boundary condition as $p\uparrow 1$}\label{Sec: Appendix C}\addcontentsline{toc}{section}{Appendix C}

In this part of the appendix, we present the proof of Lemma \ref{lem: Limit g-function n odd} which relies on the application of l’Hôpital’s rule.

\begin{proof}[Proof of Lemma \ref{lem: Limit g-function n odd}]
    We can rewrite \eqref{eq: g-function for the image measure} as 
    \begin{equation*}
        g_p(n)=\frac{\lambda_{\text{PF}}(1-\lambda_{\text{r}})-a^n\lambda_{\text{PF}}(1-\lambda_{\text{PF}})}{(1-\lambda_{\text{r}})-a^{n-1}(1-\lambda_{\text{PF}})}
    \end{equation*}
    where the numerator converges to zero as $p\uparrow 1$. The denominator, on the other hand, converges to zero if $n$ is odd and to $2$ otherwise. Consequently, we focus on the case $n\in \N_{\geq 3}$ with odd $n$, for which l’Hôpital’s can be applied to evaluate the limit. More precisely, we have
    \begin{align}\label{eq: Derivative g-function 1}
        \lim_{p\uparrow 1}g_p(n)
        =\lim_{p\uparrow 1}\frac{\diff{}{p}\big(\lambda_{\text{PF}}(1-\lambda_{\text{r}})\big)-na^{n-1}\lambda_{\text{PF}}(1-\lambda_{\text{PF}})\diff{}{p}a-a^n\diff{}{p}\big(\lambda_{\text{PF}}(1-\lambda_{\text{PF}})\big)}{-\diff{}{p}\lambda_{\text{r}}-(n-1)a^{n-2}(1-\lambda_{\text{PF}})\diff{}{p}a+a^{n-1}\diff{}{p}\lambda_{\text{PF}}}.
    \end{align}
    Let us compute all parts of this fraction with their derivatives. First of all,
    \begin{align}\label{eq: Derivative g-function 2}
        \lambda_{\text{PF}}(1-\lambda_{\text{r}})
        =\frac{1}{2}\big(1+p-2p^2+\sqrt{(1-p)(3p+1)}\big)
    \end{align}
    whose $p$-derivative is given by
        $\frac{(1-4p)\sqrt{(1-p)(3p+1)}+1-3p}{2\sqrt{(1-p)(3p+1)}}$.
    Therefore, one can compute
    \begin{align*}\label{eq: Derivative g-function 3}
       \lim_{p\uparrow 1} \sqrt{(1-p)(3p+1)} \diff{}{p}\big(\lambda_{\text{PF}}(1-\lambda_{\text{r}})\big)=-1.
    \end{align*}
    Secondly,
    \begin{align*}
        \diff{}{p}\big(\lambda_{\text{PF}}(1-\lambda_{\text{PF}})\big)=\frac{(1-p)(3p+1)-(1-2p)\sqrt{(1-p)(3p+1)}+p(1-3p)}{2\sqrt{(1-p)(3p+1)}}.
    \end{align*}
    Consequently, we have
    \begin{equation}\label{eq: Derivative g-function 6}
        \lim_{p\uparrow 1}\sqrt{(1-p)(3p+1)}\diff{}{p}\big(\lambda_{\text{PF}}(1-\lambda_{\text{PF}})\big)=-1.
    \end{equation}
    The derivatives of the eigenvalues take the form
    \begin{align*}\label{eq: Derivative g-function 7}
        \diff{}{p}\lambda_{\text{r}}
        =-\frac{\sqrt{(1-p)(3p+1)}+1-3p}{2\sqrt{(1-p)(3p+1)}}~~\text{and}~~\diff{}{p}\lambda_{\text{PF}}
        =\frac{(1-3p)-\sqrt{(1-p)(3p+1)}}{2\sqrt{(1-p)(3p+1)}}
    \end{align*}
    which leads to
    \begin{equation}\label{eq: Derivative g-function 7}
        \lim_{p\uparrow 1}\sqrt{(1-p)(3p+1)}\diff{}{p}\lambda_{\text{r}}=1~~\text{and}~~ \lim_{p\uparrow 1}\sqrt{(1-p)(3p+1)}\diff{}{p}\lambda_{\text{PF}}=-1.
    \end{equation}
     Finally, we have
     \begin{align}\label{eq: Derivative g-function 8}
       \diff{}{p}(a)
       =-\frac{2}{\big(1+p+\sqrt{(1-p)(3p+1)}\big)\sqrt{(1-p)(3p+1)}}.
    \end{align}
    Combining \eqref{eq: Derivative g-function 2}-\eqref{eq: Derivative g-function 8} and $\lim_{p\uparrow 1}a=-1$  in \eqref{eq: Derivative g-function 1}, gives us 
    \begin{align*}
        &\lim_{p\uparrow 1}g_p(n)=\lim_{p\uparrow 1}\frac{-1+(-1)^n+2n(-1)^{n-1}\lambda_{\text{PF}}(1-\lambda_{\text{PF}})\big(1+p+\sqrt{(1-p)(3p+1)}\big)^{-1}}{-1-(-1)^{n-1}+2(n-1)(-1)^{n-2}(1-\lambda_{\text{PF}})\big(1+p+\sqrt{(1-p)(3p+1)}\big)^{-1}}.
    \end{align*}
    Since $n$ is odd and $\lim\lambda_{\text{PF}}=0$, the limit equals $\frac{2}{n+1}$, proving the lemma.
 \end{proof}

 Let us discuss the resulting limit of the $g$-function as $p\uparrow 1$ in the following remark. 

\begin{remark}\label{rk: Parity dependence on b.c. one-sided view}
    The limits in Lemma \ref{lem: Limit g-function n odd} can be understood in terms of tight packings of isolated occupied sites in the hidden first-layer configuration $\omega\in T^{-1}(\omega')$. 
    
    We first consider the case where $n\in \N$ is even. If $\omega'_0=0$, observe that there is an unfixed area in $[-n+2,-1]$ in which the spins of the hidden configuration $\omega$ tend to accumulate many isolated occupied sites as $p\rightarrow 1$. If $(\omega_0,\omega_1)=(1,0)$, the configuration $\omega$ typically arranges itself as the alternating configuration $\omega^1$, defined by $(\omega^1)_i=1$ for $i\in \Z$ even and $(\omega^1)_i=0$ otherwise, see picture V in Figure \ref{fig: Explanation plot g-function 1}. If instead $(\omega_0,\omega_1)=(0,1)$, one can insert at most $\frac{n}{2}-1$ isolated occupied sites in this region, compare picture I-IV in Figure \ref{fig: Explanation plot g-function 1}. In total, at most $\frac{n}{2}$ occupied sites can be placed in the interval $[-n-2,1]$. If $\omega'_0=1$, the spins in $\{0,1\}$ are fixed and occupied due to the non-isolation constraint. The remaining unfixed spins in $[-n+2,-2]$ tend to align according to the alternating configuration $\omega^1$, resulting in a tighter packing of occupied sites than in the case $\omega'_0=0$, see picture VI in Figure \ref{fig: Explanation plot g-function 1}. More precisely, one can place up to $\frac{n}{2}+1$ occupied site in the interval $[-n+2,1]$. This imbalance explains why $\lim_{p\rightarrow 1}g_p(n)=0$ when $n$ is even.

\begin{figure}[ht!]
\centering
\scalebox{0.4}{
\begin{tikzpicture}[every label/.append style={scale=1.3}]

    \node[shape=circle,draw=black, minimum size=0.5cm] (-B) at (-4,0) {};
    \node[shape=circle,draw=cyan, fill=cyan, minimum size=0.5cm] (-C) at (-5,0) {};
    \node[shape=circle,draw=black, minimum size=0.5cm] (-D) at (-6,0) {};
    \node[shape=circle,draw=cyan, fill=cyan, minimum size=0.5cm] (-E) at (-7,0) {};
    \node[shape=circle,draw=black, minimum size=0.5cm] (-F) at (-8,0) {};
    \node[shape=circle,draw=cyan, fill=cyan,  minimum size=0.5cm] (-G) at (-9,0) {}; \node[shape=circle,draw=black, minimum size=0.5cm] (-H) at (-10,0) {};
    \node[shape=circle,draw=black, fill=black, minimum size=0.5cm,label=above:{$-8$}] (-I) at (-11,0) {};
    \node[shape=circle,draw=black, fill=black, minimum size=0.5cm] (-J) at (-12,0) {};
    \node[shape=circle,draw=black, minimum size=0.5cm, label=above:{$0$}] (A) at (-3,0) {};
    \node[shape=circle,draw=black, fill=black, minimum size=0.5cm] (B) at (-2,0) {};

    \node[minimum size=0.5, label=:{\text{I}}] () at (-13.5,-0.5) {};

    \draw[-] (-J)--(-12.7,0){};
    \draw [-] (-I) -- (-J);
    \draw [-] (-H) -- (-I);
    \draw [-] (-G) -- (-H);
    \draw [-] (-F) -- (-G);
    \draw [-] (-F) -- (-E);
    \draw [-] (-E) -- (-D);
    \draw [-] (-D) -- (-C);
    \draw [-] (-C) -- (-B);
    \draw [-] (-B) -- (A);
    \draw [-] (A) -- (B);
    \draw[-](B)--(-1.3,0){};

    \node[shape=circle,draw=cyan,  fill=cyan, minimum size=0.5cm] (-B) at (-4,-2) {};
    \node[shape=circle,draw=black, minimum size=0.5cm] (-C) at (-5,-2) {};
    \node[shape=circle,draw=black, minimum size=0.5cm] (-D) at (-6,-2) {};
    \node[shape=circle,draw=cyan, fill=cyan, minimum size=0.5cm] (-E) at (-7,-2) {};
    \node[shape=circle,draw=black,   minimum size=0.5cm] (-F) at (-8,-2) {};
    \node[shape=circle,draw=cyan,  fill=cyan,  minimum size=0.5cm] (-G) at (-9,-2) {}; \node[shape=circle,draw=black, minimum size=0.5cm] (-H) at (-10,-2) {};
    \node[shape=circle,draw=black, fill=black, minimum size=0.5cm, label=above:{$-8$}] (-I) at (-11,-2) {};
    \node[shape=circle,draw=black, fill=black, minimum size=0.5cm] (-J) at (-12,-2) {};
    \node[shape=circle,draw=black,  minimum size=0.5cm, label=above:{$0$}] (A) at (-3,-2) {};
    \node[shape=circle,draw=black, fill=black, minimum size=0.5cm] (B) at (-2,-2) {};
    \node[minimum size=0.5, label=:{\text{II}}] () at (-13.5,-2.5) {};

    \draw[-] (-J)--(-12.7,-2){};
    \draw [-] (-I) -- (-J);
    \draw [-] (-H) -- (-I);
    \draw [-] (-G) -- (-H);
    \draw [-] (-F) -- (-G);
    \draw [-] (-F) -- (-E);
    \draw [-] (-E) -- (-D);
    \draw [-] (-D) -- (-C);
    \draw [-] (-C) -- (-B);
    \draw [-] (-B) -- (A);
    \draw [-] (A) -- (B);
    \draw[-](B)--(-1.3,-2){};


\node[shape=circle,draw=cyan,  fill=cyan, minimum size=0.5cm] (-B) at (-4,-4) {};
    \node[shape=circle,draw=black, minimum size=0.5cm] (-C) at (-5,-4) {};
    \node[shape=circle,draw=cyan, fill=cyan, minimum size=0.5cm] (-D) at (-6,-4) {};
    \node[shape=circle,draw=black, minimum size=0.5cm] (-E) at (-7,-4) {};
    \node[shape=circle,draw=black,   minimum size=0.5cm] (-F) at (-8,-4) {};
    \node[shape=circle,draw=cyan,  fill=cyan,  minimum size=0.5cm] (-G) at (-9,-4) {}; \node[shape=circle,draw=black, minimum size=0.5cm] (-H) at (-10,-4) {};
    \node[shape=circle,draw=black, fill=black, minimum size=0.5cm, label=above:{$-8$}] (-I) at (-11,-4) {};
    \node[shape=circle,draw=black, fill=black, minimum size=0.5cm] (-J) at (-12,-4) {};
    \node[shape=circle,draw=black,  minimum size=0.5cm, label=above:{$0$}] (A) at (-3,-4) {};
    \node[shape=circle,draw=black, fill=black, minimum size=0.5cm] (B) at (-2,-4) {};
    \node[minimum size=0.5, label=:{\text{III}}] () at (-13.5,-4.5) {};

    \draw[-] (-J)--(-12.7,-4){};
    \draw [-] (-I) -- (-J);
    \draw [-] (-H) -- (-I);
    \draw [-] (-G) -- (-H);
    \draw [-] (-F) -- (-G);
    \draw [-] (-F) -- (-E);
    \draw [-] (-E) -- (-D);
    \draw [-] (-D) -- (-C);
    \draw [-] (-C) -- (-B);
    \draw [-] (-B) -- (A);
    \draw [-] (A) -- (B);
    \draw[-](B)--(-1.3,-4){};


\node[shape=circle,draw=black, minimum size=0.5cm] (-B) at (9,-4) {};
    \node[shape=circle,draw=cyan, fill=cyan, minimum size=0.5cm] (-C) at (8,-4) {};
    \node[shape=circle,draw=black, minimum size=0.5cm] (-D) at (7,-4) {};
    \node[shape=circle,draw=cyan, fill=cyan, minimum size=0.5cm] (-E) at (6,-4) {};
    \node[shape=circle,draw=black,   minimum size=0.5cm] (-F) at (5,-4) {};
    \node[shape=circle,draw=cyan, fill=cyan, minimum size=0.5cm] (-G) at (4,-4) {}; \node[shape=circle,draw=black,  minimum size=0.5cm] (-H) at (3,-4) {};
    \node[shape=circle,draw=black, fill=black, minimum size=0.5cm, label=above:{$-8$}] (-I) at (2,-4) {};
    \node[shape=circle,draw=black, fill=black, minimum size=0.5cm] (-J) at (1,-4) {};
    \node[shape=circle,draw=black, fill=black, minimum size=0.5cm, label=above:{$0$}] (A) at (10,-4) {};
    \node[shape=circle,draw=black, fill=black, minimum size=0.5cm] (B) at (11,-4) {};
    \node[minimum size=0.5, label=:{\text{VI}}] () at (12.5,-4.5) {};

    \draw[-] (-J)--(0.3,-4){};
    \draw [-] (-I) -- (-J);
    \draw [-] (-H) -- (-I);
    \draw [-] (-G) -- (-H);
    \draw [-] (-F) -- (-G);
    \draw [-] (-F) -- (-E);
    \draw [-] (-E) -- (-D);
    \draw [-] (-D) -- (-C);
    \draw [-] (-C) -- (-B);
    \draw [-] (-B) -- (A);
    \draw [-] (A) -- (B);
    \draw[-](B)--(11.7,-4){};

    \node[shape=circle,draw=black, minimum size=0.5cm] (-B) at (8,0) {};
    \node[shape=circle,draw=cyan, fill=cyan, minimum size=0.5cm] (-C) at (7,0) {};
    \node[shape=circle,draw=black, minimum size=0.5cm] (-D) at (6,0) {};
    \node[shape=circle,draw=cyan, fill=cyan, minimum size=0.5cm] (-E) at (5,0) {};
    \node[shape=circle,draw=black,   minimum size=0.5cm] (-F) at (4,0) {};
    \node[shape=circle,draw=black,  minimum size=0.5cm] (-G) at (3,0) {}; \node[shape=circle,draw=black, fill=black, minimum size=0.5cm,label=above:{$-8$}] (-H) at (2,0) {};
    \node[shape=circle,draw=black, fill=black, minimum size=0.5cm] (-I) at (1,0) {};
    \node[shape=circle,draw=cyan, fill=cyan, minimum size=0.5cm] (A) at (9,0) {};
    \node[shape=circle,draw=black,  minimum size=0.5cm, label=above:{$0$}] (B) at (10,0) {};
    \node[shape=circle,draw=black, fill=black, minimum size=0.5cm] (C) at (11,0) {};

    \node[minimum size=0.5, label=:{\text{IV}}] () at (12.5,-0.5) {};

    \draw[-] (-I)--(0.3,0){};
    \draw [-] (-H) -- (-I);
    \draw [-] (-G) -- (-H);
    \draw [-] (-F) -- (-G);
    \draw [-] (-F) -- (-E);
    \draw [-] (-E) -- (-D);
    \draw [-] (-D) -- (-C);
    \draw [-] (-C) -- (-B);
    \draw [-] (-B) -- (A);
    \draw [-] (A) -- (B);
    \draw [-] (B) -- (C);
    \draw[-](C)--(11.7,0){};

    \node[shape=circle,draw=cyan, fill=cyan, minimum size=0.5cm] (-B) at (8,-2) {};
    \node[shape=circle,draw=black, minimum size=0.5cm] (-C) at (7,-2) {};
    \node[shape=circle,draw=cyan, fill=cyan, minimum size=0.5cm] (-D) at (6,-2) {};
    \node[shape=circle,draw=black, minimum size=0.5cm] (-E) at (5,-2) {};
    \node[shape=circle,draw=cyan, fill=cyan,  minimum size=0.5cm] (-F) at (4,-2) {};
    \node[shape=circle,draw=black,  minimum size=0.5cm] (-G) at (3,-2) {}; \node[shape=circle,draw=black, fill=black, minimum size=0.5cm,label=above:{$-8$}] (-H) at (2,-2) {};
    \node[shape=circle,draw=black, fill=black, minimum size=0.5cm] (-I) at (1,-2) {};
    \node[shape=circle,draw=black, minimum size=0.5cm] (A) at (9,-2) {};
    \node[shape=circle,draw=cyan, fill=cyan, minimum size=0.5cm, label=above:{$0$}] (B) at (10,-2) {};
    \node[shape=circle,draw=black, minimum size=0.5cm] (C) at (11,-2) {};

    \node[minimum size=0.5, label=:{\text{V}}] () at (12.5,-2.5) {};

    \draw[-] (-I)--(0.3,-2){};
    \draw [-] (-H) -- (-I);
    \draw [-] (-G) -- (-H);
    \draw [-] (-F) -- (-G);
    \draw [-] (-F) -- (-E);
    \draw [-] (-E) -- (-D);
    \draw [-] (-D) -- (-C);
    \draw [-] (-C) -- (-B);
    \draw [-] (-B) -- (A);
    \draw [-] (A) -- (B);
    \draw [-] (B) -- (C);
     \draw[-](C)--(11.7,-2){};

\end{tikzpicture}
}
\small \caption{Shown are six configurations on the interval $[-9,1]$. All possible tight packings of isolated occupied sites, highlighted in blue, under occupied boundary conditions are depicted: on the interval $[-7,0]$ in pictures I-IV, on $[-7,1]$ in picture V, and on $[-7,-1]$ in picture VI.}
\label{fig: Explanation plot g-function 1}
\end{figure}
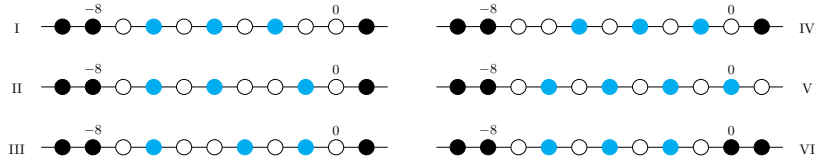

    Finally, we discuss the case where $n\in \N_{\geq 3}$ is odd. In this situation, the probability $g_p(n)$ converges to $\frac{2}{n+1}$ as $p\rightarrow 1$, see Lemma \ref{lem: Limit g-function n odd}. This behavior can be understood as follows: If $\omega_0'=0$, the alternating configuration $\omega^0$, defined by $(\omega^0)_i=(\omega^1)_i-1$ for all $i\in \Z$, can be placed perfectly in the interval $[-n,1]$ within the hidden constrained first-layer, see picture V in Figure \ref{fig: Explanation plot g-function 2}. As $p\rightarrow 1$, this configuration is the most probable arrangement for the hidden-layer. If $\omega_0'=1$, then by the non-isolation constraint, the spin at site $1$ must be occupied and the spin at the site $-1$ must be unoccupied. In this case, one cannot insert the alternating configuration $\omega^0$ or $\omega^1$ perfectly into the interval $[-n,-1]$. However, one can fill this region with $\frac{n-3}{2}$ isolated occupied sites together with exactly one pair of adjacent zeroes, ensuring that all occupied spins in $[-n+1,-1]$ remain isolated. For a given distance $n$, there are $\frac{n-1}{2}$ distinct positions for this pair of zeroes and hence $\frac{n-1}{2}$ admissible arrangements, compare pictures I-IV in Figure \ref{fig: Explanation plot g-function 2}. In total, we get the same number of occupied spins in the area $[-n,1]$ as in the case where the origin is unoccupied. Therefore, for an odd distance $n$, we obtain exactly one maximally packed configuration when $\omega'_0=0$ and $\frac{n-1}{2}$ maximally packed configurations when $\omega'_0=1$. This combinatorial imbalance leads to the limiting value $\frac{2}{n+1}$ for $g_p(n)$ as $p\rightarrow 1$.
\end{remark}

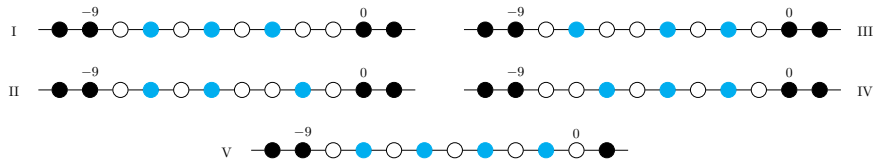
\begin{figure}[ht!]
\centering
\scalebox{0.4}{
\begin{tikzpicture}[every label/.append style={scale=1.3}]

    \node[shape=circle,draw=black, minimum size=0.5cm] (-B) at (-5,0) {};
    \node[shape=circle,draw=black, minimum size=0.5cm] (-C) at (-6,0) {};
    \node[shape=circle,draw=cyan, fill=cyan, minimum size=0.5cm] (-D) at (-7,0) {};
    \node[shape=circle,draw=black, minimum size=0.5cm] (-E) at (-8,0) {};
    \node[shape=circle,draw=cyan, fill=cyan,  minimum size=0.5cm] (-F) at (-9,0) {};
    \node[shape=circle,draw=black,  minimum size=0.5cm] (-G) at (-10,0) {}; \node[shape=circle,draw=cyan, fill=cyan, minimum size=0.5cm] (-H) at (-11,0) {};
    \node[shape=circle,draw=black, minimum size=0.5cm] (-I) at (-12,0) {};
    \node[shape=circle,draw=black, fill=black, minimum size=0.5cm, label=above:{$-9$}] (-J) at (-13,0) {};
    \node[shape=circle,draw=black, fill=black, minimum size=0.5cm] (-K) at (-14,0) {};
    \node[shape=circle,draw=black, fill=black, minimum size=0.5cm, label=above:{$0$}] (A) at (-4,0) {};
    \node[shape=circle,draw=black, fill=black, minimum size=0.5cm] (B) at (-3,0) {};

    \node[minimum size=0.5, label=:{\text{I}}] () at (-15.5,-0.5) {};

    \draw[-] (-K)--(-14.7,0){};
    \draw [-] (-J) -- (-K);
    \draw [-] (-I) -- (-J);
    \draw [-] (-H) -- (-I);
    \draw [-] (-G) -- (-H);
    \draw [-] (-F) -- (-G);
    \draw [-] (-F) -- (-E);
    \draw [-] (-E) -- (-D);
    \draw [-] (-D) -- (-C);
    \draw [-] (-C) -- (-B);
    \draw [-] (-B) -- (A);
    \draw [-] (A) -- (B);
    \draw[-](B)--(-2.3,0){};

    \node[shape=circle,draw=black, minimum size=0.5cm] (-B) at (-5,-2) {};
    \node[shape=circle,draw=cyan, fill=cyan, minimum size=0.5cm] (-C) at (-6,-2) {};
    \node[shape=circle,draw=black, minimum size=0.5cm] (-D) at (-7,-2) {};
    \node[shape=circle,draw=black, minimum size=0.5cm] (-E) at (-8,-2) {};
    \node[shape=circle,draw=cyan, fill=cyan,  minimum size=0.5cm] (-F) at (-9,-2) {};
    \node[shape=circle,draw=black,  minimum size=0.5cm] (-G) at (-10,-2) {}; \node[shape=circle,draw=cyan, fill=cyan, minimum size=0.5cm] (-H) at (-11,-2) {};
    \node[shape=circle,draw=black, minimum size=0.5cm] (-I) at (-12,-2) {};
    \node[shape=circle,draw=black, fill=black, minimum size=0.5cm, label=above:{$-9$}] (-J) at (-13,-2) {};
    \node[shape=circle,draw=black, fill=black, minimum size=0.5cm] (-K) at (-14,-2) {};
    \node[shape=circle,draw=black, fill=black, minimum size=0.5cm, label=above:{$0$}] (A) at (-4,-2) {};
    \node[shape=circle,draw=black, fill=black, minimum size=0.5cm] (B) at (-3,-2) {};
    \node[minimum size=0.5, label=:{\text{II}}] () at (-15.5,-2.5) {};

    \draw[-] (-K)--(-14.7,-2){};
    \draw [-] (-J) -- (-K);
    \draw [-] (-I) -- (-J);
    \draw [-] (-H) -- (-I);
    \draw [-] (-G) -- (-H);
    \draw [-] (-F) -- (-G);
    \draw [-] (-F) -- (-E);
    \draw [-] (-E) -- (-D);
    \draw [-] (-D) -- (-C);
    \draw [-] (-C) -- (-B);
    \draw [-] (-B) -- (A);
    \draw [-] (A) -- (B);
    \draw[-](B)--(-2.3,-2){};


\node[shape=circle,draw=cyan, fill=cyan, minimum size=0.5cm] (-B) at (2,-4) {};
    \node[shape=circle,draw=black, minimum size=0.5cm] (-C) at (1,-4) {};
    \node[shape=circle,draw=cyan, fill=cyan, minimum size=0.5cm] (-D) at (0,-4) {};
    \node[shape=circle,draw=black, minimum size=0.5cm] (-E) at (-1,-4) {};
    \node[shape=circle,draw=cyan, fill=cyan,  minimum size=0.5cm] (-F) at (-2,-4) {};
    \node[shape=circle,draw=black,  minimum size=0.5cm] (-G) at (-3,-4) {}; \node[shape=circle,draw=cyan, fill=cyan, minimum size=0.5cm] (-H) at (-4,-4) {};
    \node[shape=circle,draw=black, minimum size=0.5cm] (-I) at (-5,-4) {};
    \node[shape=circle,draw=black, fill=black, minimum size=0.5cm, label=above:{$-9$}] (-J) at (-6,-4) {};
    \node[shape=circle,draw=black, fill=black, minimum size=0.5cm] (-K) at (-7,-4) {};
    \node[shape=circle,draw=black, minimum size=0.5cm, label=above:{$0$}] (A) at (3,-4) {};
    \node[shape=circle,draw=black, fill=black, minimum size=0.5cm] (B) at (4,-4) {};
    \node[minimum size=0.5, label=:{\text{V}}] () at (-8.5,-4.5) {};

    \draw[-] (-K)--(-7.7,-4){};
    \draw [-] (-J) -- (-K);
    \draw [-] (-I) -- (-J);
    \draw [-] (-H) -- (-I);
    \draw [-] (-G) -- (-H);
    \draw [-] (-F) -- (-G);
    \draw [-] (-F) -- (-E);
    \draw [-] (-E) -- (-D);
    \draw [-] (-D) -- (-C);
    \draw [-] (-C) -- (-B);
    \draw [-] (-B) -- (A);
    \draw [-] (A) -- (B);
    \draw[-](B)--(4.7,-4){};

    \node[shape=circle,draw=black, minimum size=0.5cm] (-B) at (7,0) {};
    \node[shape=circle,draw=cyan, fill=cyan, minimum size=0.5cm] (-C) at (6,0) {};
    \node[shape=circle,draw=black, minimum size=0.5cm] (-D) at (5,0) {};
    \node[shape=circle,draw=black, minimum size=0.5cm] (-E) at (4,0) {};
    \node[shape=circle,draw=cyan, fill=cyan,  minimum size=0.5cm] (-F) at (3,0) {};
    \node[shape=circle,draw=black,  minimum size=0.5cm] (-G) at (2,0) {}; \node[shape=circle,draw=black, fill=black, minimum size=0.5cm,label=above:{$-9$}] (-H) at (1,0) {};
    \node[shape=circle,draw=black, fill=black, minimum size=0.5cm] (-I) at (0,0) {};
    \node[shape=circle,draw=cyan, fill=cyan, minimum size=0.5cm] (A) at (8,0) {};
    \node[shape=circle,draw=black, minimum size=0.5cm] (B) at (9,0) {};
    \node[shape=circle,draw=black, fill=black, minimum size=0.5cm, label=above:{$0$}] (C) at (10,0) {};
    \node[shape=circle,draw=black, fill=black, minimum size=0.5cm] (D) at (11,0) {};

    \node[minimum size=0.5, label=:{\text{III}}] () at (12.5,-0.5) {};

    \draw[-] (-I)--(-0.7,0){};
    \draw [-] (-H) -- (-I);
    \draw [-] (-G) -- (-H);
    \draw [-] (-F) -- (-G);
    \draw [-] (-F) -- (-E);
    \draw [-] (-E) -- (-D);
    \draw [-] (-D) -- (-C);
    \draw [-] (-C) -- (-B);
    \draw [-] (-B) -- (A);
    \draw [-] (A) -- (B);
    \draw [-] (B) -- (C);
    \draw [-] (C) -- (D);
    \draw[-](D)--(11.7,0){};

    \node[shape=circle,draw=black, minimum size=0.5cm] (-B) at (7,-2) {};
    \node[shape=circle,draw=cyan, fill=cyan, minimum size=0.5cm] (-C) at (6,-2) {};
    \node[shape=circle,draw=black, minimum size=0.5cm] (-D) at (5,-2) {};
    \node[shape=circle,draw=cyan, fill=cyan, minimum size=0.5cm] (-E) at (4,-2) {};
    \node[shape=circle,draw=black,  minimum size=0.5cm] (-F) at (3,-2) {};
    \node[shape=circle,draw=black,  minimum size=0.5cm] (-G) at (2,-2) {}; \node[shape=circle,draw=black, fill=black, minimum size=0.5cm,label=above:{$-9$}] (-H) at (1,-2) {};
    \node[shape=circle,draw=black, fill=black, minimum size=0.5cm] (-I) at (0,-2) {};
    \node[shape=circle,draw=cyan, fill=cyan, minimum size=0.5cm] (A) at (8,-2) {};
    \node[shape=circle,draw=black, minimum size=0.5cm] (B) at (9,-2) {};
    \node[shape=circle,draw=black, fill=black, minimum size=0.5cm, label=above:{$0$}] (C) at (10,-2) {};
    \node[shape=circle,draw=black, fill=black, minimum size=0.5cm] (D) at (11,-2) {};

    \node[minimum size=0.5, label=:{\text{IV}}] () at (12.5,-2.5) {};

    \draw[-] (-I)--(-0.7,-2){};
    \draw [-] (-H) -- (-I);
    \draw [-] (-G) -- (-H);
    \draw [-] (-F) -- (-G);
    \draw [-] (-F) -- (-E);
    \draw [-] (-E) -- (-D);
    \draw [-] (-D) -- (-C);
    \draw [-] (-C) -- (-B);
    \draw [-] (-B) -- (A);
    \draw [-] (A) -- (B);
    \draw [-] (B) -- (C);
    \draw [-] (C) -- (D);
    \draw[-](D)--(11.7,-2){};

\end{tikzpicture}
}
\small \caption{Shown are six configurations on the interval $[-10,1]$. All possible tight packings of isolated occupied sites, highlighted in blue, under occupied boundary conditions are depicted: on the interval $[-8,-1]$ in pictures I-IV, and on $[-8,0]$ in picture V.}
\label{fig: Explanation plot g-function 2}
\end{figure}

\section*{Declarations}

\noindent\textbf{Data Availability.} 
No datasets were generated or analyzed during this work.
Data sharing is not applicable to this article.
The plots in Figure \ref{fig: Speed of convergence and plot g-function} have been generated by direct computation using Wolfram Mathematica.
\medskip 

\noindent\textbf{Competing interests.} The authors have no competing interests to declare that are relevant to the content of
this article. Furthermore, the authors have no financial interests to disclose.

\printbibliography

\end{document}